\newtheorem{thm}{Theorem}[section]
\newtheorem{lem}[thm]{Lemma}
\newtheorem{rem}[thm]{Remark}
\newtheorem{asm}[thm]{Assumption}
\def\ba{{\mathbf{a}}}
\def\bE{{\bf E}}
\def\bW{{\bf W}}
\newcommand{\C}{\rm I\kern-.5emC}
\newcommand{\R}{\rm I\kern-.19emR}
\def\3bar{{|\hspace{-.02in}|\hspace{-.02in}|}}
\journal{Elsevier}
\begin{document}
\newcommand{\BX}{{\bf X}}
\newcommand{\cv}{{\cal V}}
\newcommand{\cW}{{\cal W}}
\newcommand{\co}{{\cal O}}

\renewcommand{\theequation}{\thesection.\arabic{equation}}
\def\@eqnnum{{\reset@font\rm (\theequation)}}




\def\a{\alpha}
\def\b{\beta}
\def\d{\delta}\def\D{\Delta}
\def\e{\epsilon}
\def\g{\gamma}\def\G{\Gamma}
\def\k{\kappa}
\def\lam{\lambda}\def\Lam{\Lambda}
\renewcommand\o{\omega}\renewcommand\O{\Omega}
\def\s{\sigma}\def\S{\Sigma}
\renewcommand\t{\theta}\def\vt{\vartheta}
\newcommand{\vphi}{\varphi}
\def\z{\zeta}

\newcommand{\tsigma}{\tilde{\s}}
\newcommand{\tbsigma}{\tilde{\bsigma}}
\def\te{\tilde{\e}}
\def\tu{\tilde{u}}

\newcommand{\bchi}{\mbox{\boldmath$\chi$}}
\newcommand{\bdelta}{\mbox{\boldmath$\delta$}}
\newcommand{\bepsilon}{\mbox{\boldmath$\epsilon$}}
\newcommand{\bfeta}{\mbox{\boldmath$\eta$}}
\newcommand{\bgamma}{\mbox{\boldmath$\gamma$}}
\newcommand{\bomega}{\mbox{\boldmath$\omega$}}
\newcommand{\bvphi}{\mbox{\boldmath$\varphi$}}
\newcommand{\bphi}{\mbox{\boldmath$\phi$}}
\newcommand{\bPhi}{\mbox{\boldmath$\Phi$}}
\newcommand{\bpsi}{\mbox{\boldmath$\psi$}}
\newcommand{\bPsi}{\mbox{\boldmath$\Psi$}}
\newcommand{\bsigma}{\mbox{\boldmath$\sigma$}}
\newcommand{\btau}{\mbox{\boldmath$\tau$}}
\newcommand{\bxi}{\mbox{\boldmath$\xi$}}
\newcommand{\brho}{\mbox{\boldmath$\rho$}}
\newcommand{\bbeta}{\mbox{\boldmath$\beta$}}
\newcommand{\bzeta}{\mbox{\boldmath$\zeta$}}

\def\bk{\boldsymbol{\kappa}}
\def\bmu{\boldsymbol\mu}
\def\bxi{\boldsymbol{\xi}}
\def\bz{\boldsymbol{\zeta}}

\def\ba{{\bf a}}
\def\bb{{\bf b}}
\def\bc{{\bf c}}
\def\be{{\bf e}}
\def\bff{{\bf f}}
\def\bg{{\bf g}}
\def\bn{{\bf n}}
\def\bp{{\bf p}}
\def\bq{{\bf q}}
\def\bs{{\bf s}}
\def\bt{{\bf t}}
\def\bu{{\bf u}}
\def\bv{{\bf v}}
\def\bw{{\bf w}}
\def\bx{{\bf x}}
\def\by{{\bf y}}
\def\bzz{{\bf z}}

\def\bD{{\bf D}}
\def\bE{{\bf E}}
\def\bF{{\bf F}}
\def\bH{{\bf H}}
\def\bJ{{\bf J}}
\def\bV{{\bf V}}
\def\bU{{\bf U}}
\def\bW{{\bf W}}
\def\bX{{\bf X}}
\def\bY{{\bf Y}}

\def\cA{{\cal A}}
\def\cC{{\cal C}}
\def\cD{{\cal D}}
\def\cE{{\cal E}}
\def\cF{{\cal F}}
\def\cG{{\cal G}}
\def\cI{{\cal I}}
\def\cJ{{\cal J}}
\def\cK{{\cal K}}
\def\cL{{\cal L}}
\def\cO{{\cal O}}
\def\cP{{\cal P}}
\def\cQ{{\cal Q}}
\def\cR{{\cal R}}
\def\cS{{\cal \Sigma}}
\def\cT{{\cal T}}
\def\cU{{\cal U}}
\def\cV{{\cal V}}

\def\scT{{_\cT}}
\def\sD{{_D}}
\def\sE{{_E}}
\def\sF{{_F}}
\def\sFz{{_{F_z}}}
\def\sK{{_K}}
\def\sI{{_I}}
\def\sb{{_b}}
\def\sN{{_N}}

\def\curl{{{\bf curl} \ }}
\def\rot{{\mbox{rot}\ }}
\def\BPI{{\bf \Pi}}

\def\cth{\cT_h}
\def\ctH{\cT_H}

\def\tJ{\tilde{\J}}

\def\hK{\widehat{K}}
\def\hx{\widehat{x}}
\def\hy{\widehat{y}}
\def\bhv{\widehat{\bv}}

\def\l{\ell}
\def\bl{\boldsymbol{\ell}}
\def\col{\colon}
\def\f12{\frac12}
\def\dfrac{\displaystyle\frac}
\def\dint{\displaystyle\int}
\def\nab{\nabla}
\def\p{\partial}
\def\sm{\setminus}
\def\dsum{\displaystyle\sum}
\newcommand{\pp}[2]{\frac{\partial {#1}}{\partial {#2}}}
\def\bzero{{\bf 0}}

\def\divv{\nab\cdot}
\def\divx{\nab_x\cdot}
\def\divtx{\nab_{t,x}\cdot}
\def\nabx{\nab_x}

\newcommand{\grad}{\nabla}
\newcommand{\curlt}{{\nabla \times}}
\newcommand{\gperp}{\nabla^{\perp}}
\newcommand{\gradt}{\nabla\cdot}

\def\forallqq{\quad\forall\,}
\def\aph{A^{1/2}}
\def\amh{A^{-1/2}}

\def\osc{{\rm osc \, }}

\def\Im{{\rm Im}}
\newcommand{\tr}{{\rm tr}}
\def\divvr{{\rm div}}
\def\curllr{{\rm curl}}
\def\curll{{\rm curl}}
\def\curl{{\bf curl}}
\newcommand{\bgrad}{{\bf grad}}
\newcommand\diam{\mathrm{diam\,}}
\renewcommand\Im{\mathrm{Im\,}}
\def\Span{\mbox{Span}}
\def\supp{\mbox{supp\,}}
\newcommand{\trace}{{\rm trace}}

\newcommand{\tri}{|\!|\!|}
\newcommand{\ljump}{\lbrack\!\lbrack}
\newcommand{\rjump}{\rbrack\!\rbrack}
\newcommand{\bdm}{\begin{displaymath}}
\newcommand{\edm}{\end{displaymath}}
\newcommand{\beq}{\begin{equation}}
\newcommand{\eeq}{\end{equation}}
\newcommand{\beqa}{\begin{eqnarray}}
\newcommand{\eeqa}{\end{eqnarray}}
\newcommand{\beqas}{\begin{eqnarray*}}
\newcommand{\eeqas}{\end{eqnarray*}}
\newcommand{\ul}{\underline}
\newcommand{\wh}{\widehat}
\newcommand{\la}{\langle}
\newcommand{\ra}{\rangle}

\newcommand{\Lt}{L^2(\Omega)}
\newcommand{\Lts}{L^2(\Omega)^2}
\newcommand{\Ltc}{L^2(\Omega)^3}
\newcommand{\Ho}{H^1(\Omega)}
\newcommand{\Hoh}{H^1(\wh{\Omega})}
\newcommand{\Hoi}{H^1(\Omega_i)}
\newcommand{\Hos}{H^1(\Omega)^2}
\newcommand{\Hoc}{H^1(\Omega)^3}
\newcommand{\Hoch}{H^1(\wh{\Omega})^3}
\newcommand{\Hoci}{H^1(\Omega_i)^3}
\newcommand{\Hoz}{H^1_0(\Omega)}
\newcommand{\Ht}{H^2(\Omega)}
\newcommand{\Hti}{H^2(\Omega_i)}
\newcommand{\Hts}{H^2(\Omega)^2}
\newcommand{\Htc}{H^2(\Omega)^3}
\newcommand{\Htz}{H^0(\Omega)}
\newcommand{\Hh}{H^{1/2}(\Gamma)}
\newcommand{\Hhi}{H^{1/2}(\Gamma_i)}
\newcommand{\Hmh}{H^{-1/2}(\Gamma)}
\newcommand{\Hdiv}{H(\divvr;\,\Omega)}
\newcommand{\Hdivh}{H(\divv;\,\wh \Omega)}
\newcommand{\hcurl}{H(\curl\,A;\,\Omega)}
\newcommand{\Hcurl}{H(\curll\,A;\,\Omega)}
\newcommand{\Hcrl}{H(\curll\,;\,\Omega)}
\newcommand{\hcrl}{H(\curl\,;\,\Omega)}
\newcommand{\Hcrlh}{H(\curll\,;\,\wh\Omega)}
\newcommand{\hcrlh}{H(\curl\,;\,\wh\Omega)}
\newcommand{\Wdiv}{\BW_0(\mbox{\divv}\,;\,\Omega)}
\newcommand{\Wcurl}{\BW_0(\mbox{\curl}\,A;\,\Omega)}
\newcommand{\WcrossV}{\BW \times V}

\begin{frontmatter}



\title{Adaptive Least-Squares Finite Element Methods 
for Linear Transport Equations Based on an H(div) Flux Reformulation}


\author{Qunjie Liu} 
\ead{qjliu2-c@my.cityu.edu.hk}
\address{Department of Mathematics, 
City University of Hong Kong, Hong Kong SAR, China}

\author{Shun Zhang \corref{cor1}}
\ead{shun.zhang@cityu.edu.hk}
\address{Department of Mathematics, 
City University of Hong Kong, Hong Kong SAR, China}

\cortext[cor1]{Corresponding author}

\begin{keyword}
least-squares finite element method 
\sep linear transport equation
\sep error estimate
\sep discontinuous solution
\sep overshooting
\sep adaptive LSFEM


\end{keyword}

\begin{abstract}
In this paper, we study the least-squares finite element methods (LSFEM) for the linear hyperbolic transport equations. The linear transport equation naturally allows discontinuous solutions and discontinuous inflow conditions, while the normal component of the flux across the mesh faces needs to be continuous. Traditional LSFEMs using continuous finite element approximations will introduce unnecessary extra error for discontinuous solutions and boundary conditions. In order to separate the continuity requirements, a new flux variable is introduced. With this reformulation, the continuities of the flux and the solution can be handled separately in natural $H(\mbox{div};\Omega)\times L^2(\Omega)$ conforming finite element spaces. Several variants of the methods are developed to handle the inflow boundary condition strongly or weakly. 

With the reformulation, the new LSFEMs can handle discontinuous solutions and boundary conditions much better than the traditional LSFEMs with continuous polynomial approximations. With least-squares functionals as a posteriori error estimators, the adaptive methods can naturally identify error sources including singularity and non-matching discontinuity. For discontinuity aligned mesh, no extra error is introduced. If an $RT_0 \times P_0$ pair is used to approximate the flux and the solution, the new adaptive LSFEMs can  approximate discontinuous solutions with almost no overshooting even when the mesh is not aligned with discontinuity.


Existence and uniqueness of the solutions and a priori and a posteriori error estimates are established for the proposed methods. Extensive numerical tests are performed to show the effectiveness of the methods developed in the paper.
\end{abstract}
\end{frontmatter}

\section{Introduction}\label{intro}
\setcounter{equation}{0}
In this paper, we consider the following linear transport equation 
in the conservative form. It is a scalar linear partial differential equation of  
hyperbolic type, which is also called the linear advection equation:  
\begin{align} \label{transporteqn}
\gradt(\bbeta u)+\gamma u &= f \quad \mbox{in} \,\ \O, \\ \nonumber
                        u &= g \quad \mbox{on} \,\ \Gamma_{-},
\end{align}
with $\bbeta$ an advection field and $\Gamma_-$ the inflow boundary. 
Detailed descriptions of the equation can be found in Section 2.


As pointed in almost all partial differential equation books, it is crucial to realize that unlike the elliptic or parabolic equations where the solution is generally smooth, hyperbolic equations commonly have {\bf discontinuous} solutions. Also, it is very common that the boundary conditions can also be discontinuous. When choosing a finite element approximation space and formulation for hyperbolic equations, we should be very careful about the discontinuity of the solution. In \cite{Zhang:19}, it is observed that for a discontinuity aligned mesh, the discontinuous finite element space is the best choice; for a discontinuity non-aligned mesh, in order to reduce the overshooting, the best choice is to use the piecewise constant approximation space combined with adaptive methods, while all linear or higher order continuous or discontinuous finite elements will have non-trivial oscillations.

First, we review some known abstract variational formulations for the linear transport equation. We modified the formulations to the conservative settings here. Let 
$$
W = \{ v \in L^2(\O): \gradt (\bbeta v) \in L^2(\O)\}.
$$

\noindent{(\bf Variational formulation 1)} \cite{DE:12}
Find $u\in W$, such that
\begin{equation} \label{VF1}
(\gradt(\bbeta u),v)+(\gamma u,v)+(\bbeta\cdot\bn u,v)_{\Gamma_-} 
= (f,v) + (\bbeta\cdot\bn g,v)_{\Gamma_-}, \quad \forall v\in L^2(\O).
\end{equation}

\noindent{(\bf Variational formulation 2) (ultra-weak)}  \cite{DHSW:12}
Find $u\in L^2(\O)$, such that
\begin{equation} \label{VF2}
(u,-\bbeta \cdot \nabla v+\gamma v) 
= (f,v) -(g, \bbeta\cdot\bn v)_{\Gamma_-}, \quad \forall v\in Y,
\end{equation}
with $Y =\{v: v\in L^2(\O),\bbeta \cdot \nabla v \in L^2(\O),v|_{\Gamma_+}=0\}$.

\noindent{(\bf Variational formulation 3 (least-squares))}  \cite{CJ:88,Jiang:98,BochevChoi:01,BC:01,DMMO:04,BG:09,BG:16}
Find $u\in W$, such that
\begin{eqnarray} \label{VF3}
&&(\gradt(\bbeta u)+\gamma u,\gradt(\bbeta v)+\gamma v)+(\bbeta\cdot\bn u,v)_{\Gamma_-} 
 \\ \nonumber
 &&\quad \quad \quad = (f,\gradt(\bbeta v)+\gamma v) + (\bbeta\cdot\bn g,v)_{\Gamma_-}, \quad \forall v\in W.
\end{eqnarray}
In formulations 1 and 3, the trial space is $W$, in the formulation 2, the test space is $Y$. It is well-known that if the standard $C^0$ piecewise polynomial space is used as trial and test spaces for the variational formulation 1 \eqref{VF1}, the method frequently does not give reasonable results in contrast to the elliptic and parabolic cases \cite{Johnson:87}.  It is also true for the least-squares variational formulations \eqref{VF3}, for the simplest piecewise constant discontinuity problem, continuous finite element approximations will introduce unnecessary error, since even the simplest piecewise constant solution is not in the approximation space. If an adaptive method is used, the error indicator will always indicate unnecessary big errors for those elements on the discontinuous region, even when the mesh is very fine.  Also, Gibbs phenomena like spurious over-shootings are unavoidable near the discontinuity, see discussions in \cite{Zhang:19}. The reason is simple: continuous finite element space $\subset H^1(\O)\subset W$ is not good for approximating discontinuous functions. 

The method in \cite{DHSW:12} uses $L^2(\O)$ as the trial space, so the standard discontinuous piecewise polynomial space can be used as the discrete trial space, but the test space $Y$ is essentially as complicated as $W$ and needs a very dedicated and complicated construction.

On the other hand, a close look at the space $W$ will find that simple piecewise discontinuous polynomial space is not its subspace since it needs another continuity requirement. For a true solution $u\in W$, the condition of $\gradt(\bbeta u) \in L^2(\O)$ essentially means 
$$
	u \in L^2(\O) \quad \mbox{and}\quad\bbeta u \in H(\divvr;\O).
$$ 
Thus the continuity in the normal direction of $\bbeta u$ needs to be enforced in a strong or weak way. This is probably the reason why continuous finite element spaces are used in \cite{CJ:88,Jiang:98,BochevChoi:01,BC:01,DMMO:04,BG:09,BG:16},  since continuous finite element space is a subspace of $W$. The only problem is that it requires too much continuity: the solution $u$ may not be continuous at all. In the a posteriori error analysis, there is a simple "gold rule": when the numerical solution is exact, the error estimator is zero. For the continuous LSFEM approximation for the problem with a discontinuous solution, the numerical solution will never be exact even when the mesh is aligned with the discontinuity and the solution is as simple as two constants (see our numerical example 7.4).  The a posteriori error estimator will never be zero in this extremely simple case.

Compared to the continuous finite element method, the famous discontinuous Galerkin method is a right method \cite{RH:73,LR:74,BMS:04}. In DG methods, the solution is approximated in piecewise discontinuous polynomial space, while the continuity of the normal component of $\bbeta u$ is enforced weakly. 


In this paper, we propose new variational formulations with flux reformulation. Introduce the flux $\bsigma = \bbeta u$, then we have a first order system with appropriate boundary conditions:
\beq \label{1stordersys}
\bsigma -\bbeta u =0 \quad \mbox{and}\quad \gradt \bsigma +\gamma u =f. 
\eeq
With the solution $(\bsigma,u) \in H(\divvr;\O)\times L^2(\O)$, in order to develop a variational formulation, we also need the test spaces and their discrete subspaces, and make sure that the discrete problem is well-posed.
One way to set up a variational problem for a first order system is developing a mixed type of formulation. But the equation here is unusual and non-symmetric, the stability of the mixed formulation is not clear. The other way of developing a numerical method is using a Petrov-Galerkin formulation as in \cite{DHSW:12,DG:10,DG:11}, where special test functions are constructed.  In this paper, we will use the brute-force method by introducing an artificial, externally defined energy-type variational principle, the least-squares variational principle.

Traditionally, new unknowns are introduced in the least-squares finite element method  in order to decrease the order of problem, e.g., changing the problem from a second order equation into a first order system so that the resulting discrete problem can use low order finite elements and has a reasonable condition number.  For the linear transport equation we study here, it seems unnecessary to introduce new unknowns since the problem is already a first order equation. The reason we introduce the new flux $\bsigma$ is that {\bf different continuity requirements can be handled separately}. In \eqref{1stordersys}, the space requirements for the unknowns are two standard spaces: $\bsigma \in H(\divvr;\O)$ and $u\in L^2(\O)$.  Standard Raviart-Thomas $RT_k$ space and the piecewise discontinuous space $P_k$ can be used to approximate them. 

For the inflow boundary condition, since the space for $u$ is now only $L^2$, we enforce it on $\bsigma$. It can be handled strongly or weakly, thus several closely related least-squares finite element methods are developed here. We call the methods LSFEM and LSFEM-B to denote the method that enforces the boundary condition in the space or by a penalty term, separately. Different weights can be chosen to handle the inflow boundary condition weakly, which lead to two different versions of LSFEM-B methods.

The least-squares finite element methods have several attractive properties: the linear system it produced is symmetric positive definite, and it has a natural and sharp a posteriori error estimator that can be used in adaptive mesh refinements. Because the discrete system is naturally SPD, it opens doors for advanced discrete solves like algebraic multigrid \cite{DMMO:04}. 
 
For the a posteriori error estimator for the linear hyperbolic equation, although there are several researches in this direction, the results are less satisfactory compared to the elliptic equations. Normally, only the upper reliability bound is developed, the lower efficiency bound is often not proved \cite{GHM:14} or only proved under a saturation assumption \cite{Burman:09}. In our methods, the least-squares functional is a natural and sharp error indicator. With respect to the least-squares norms, the error indicator is exact with effectivity constant one. It is also the best one can get from a posteriori estimator: the numerical methods minimize the least-squares energy, the error indicators estimate exactly the error in least-squares energy norms and point out the bad approximated elements.

Because of the reformulation, the methods developed in the paper can use the lowest order finite element approximation spaces: $RT_0$ and $P_0$. For discontinuous solutions with unaligned meshes, $P_0$ approximations can reduce the over/under shootings. Combined with the adaptive mesh refinements, we show numerically that over/undershooting effects can be reduced to almost invisible in "the eye-ball norm". This matches the discussions in \cite{Zhang:19} for approximating discontinuous solutions by adaptive continuous or discontinuous  finite elements.


Besides the LSFEMs with problematic continuous approximations, the nonconforming LSFEM in \cite{DHSW:12} and the similar method in \cite{MY:18} use discontinuous approximations. The continuity of the normal component of the flux $\bbeta u$ is weakly enforced by adding a jump term into the discrete formulation. Compared with these methods, the first advantage of our method is that we can use $P_0$ approximations while these methods cannot. This makes our method more suitable for approximating discontinuous solutions on a non-aligned mesh. Also, no jump terms on inter-elements faces/edges are needed in our method, which simplifies the implementation. Besides, it is still not very clear what are the right or optimal weight and form of those inter-element jumps, see \cite{DHSW:12,MY:18}. Earlier methods for hyperbolic equations based on minimization principles can be found in \cite{Lavery:88,Lavery:89}.

The paper is organized as follows.  Section 2 describes the model linear hyperbolic transport problem. Based on a flux reformulation, a least-squares variational problem with strong enforced inflow boundary condition is presented in section 3. Corresponding LSFEM is developed in Section 4, a priori and a posteriori error estimates are established. Sections 5 and 6 develop two versions of least-squares variational formulations and corresponding finite element methods with weakly enforced boundary conditions. Section 7 provides numerical results for many test problems.
In Section 8, we make some concluding remarks.
 
\section{Model Linear Hyperbolic Transport Equation}
\setcounter{equation}{0}

Let $\Omega$ be a bounded polyhedral domain in $\Re^d$ with Lipschitz boundary.
We assume the advective velocity field $\bbeta=(\beta_1,\cdots, \beta_d)^{T}$ is a
vector-valued function defined on $\bar{\O}$ with $\bbeta\in [C^1(\overline{\O})]^d$ for simplicity.
We also assume $\gamma \in L^{\infty}(\O)$ satisfying:
$$
\gamma + \dfrac{1}{2}\gradt \bbeta \geq 0.
$$
Note when $\gradt \bbeta =0$ (for example, $\bbeta$ is a constant vector), $\gamma$ can be $0$.

We define the inflow and outflow parts of $\p \O$ in the usual fashion:
\begin{align*}
&\Gamma_{-}=\{x\in\p \O:\bbeta(x)\cdot\bn(x)<0\}=\mbox{inflow boundary}, \\
&\Gamma_{+}=\{x\in\p\O:\bbeta(x)\cdot\bn(x)>0\}=\mbox{outflow boundary},
\end{align*}
where $\bn(x)$ denotes the unit outward normal vector to $\p\O$ at $x\in\p\O$.

\begin{asm} \label{asmbeta}
({\bf Assumptions of $\bbeta$ and $\gamma$})
We assume that one of the following assumptions on the coefficients is true:
\begin{enumerate}[(i)]
\item $0<|\bbeta|<C$.
For every $\hat{\bx} \in \Gamma_-$, let $\bx(r)$ be a streamline of $\bbeta$ with initial condition $\bx(r_0)=\hat{\bx}$. Assume that there exits a transformation to a coordinate system such that the streamlines are lined up with the $r$ coordinates direction and the Jacobian of the transformation is bounded. We also assume that every streamline connects
$\Gamma_-$ and $\Gamma_+$ with a finite length $\ell(\hat{\bx})$ for $\hat{\bx}\in \Gamma_-$. Note that this case includes the case $\bbeta$ is a nonzero constant vector.

\item There exists a positive $\gamma_0$, such that
$$
\gamma + \dfrac{1}{2}\gradt \bbeta \geq \gamma_0 > 0 \quad \mbox{in  } \Omega.
$$
We also assume that the inflow and outflow boundaries are well-separated.

Note that this case does not include an important case that $\bbeta$ is a constant vector and $\gamma =0$.
\end{enumerate}
\end{asm}


Define the following trace space
$$
L^2(|\bbeta\cdot\bn|;\Gamma_-):= \{
v \mbox{ is measurable on } \p\O : \int_{\Gamma_-}|\bbeta\cdot\bn| v^2 < \infty\}.
$$
For the inhomogeneous boundary condition $u=g$ on $\Gamma_-$, we assume $g \in L^2(|\bbeta\cdot\bn|;\Gamma_-)$.

\begin{thm}\label{thm_exist} (Existence and uniqueness of the solution of the linear transport equation)
For $g \in L^2(|\bbeta\cdot\bn|;\Gamma_-)$, the linear transport equation (\ref{transporteqn}) has a unique solution
in $W$ assuming Assumption \ref{asmbeta} of $\bbeta$ and $\gamma$ is true.
\end{thm}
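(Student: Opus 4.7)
The plan is to treat the two alternatives of Assumption~\ref{asmbeta} by distinct techniques: an energy/variational argument under (ii), and the method of characteristics under (i). In both cases I would first reduce to the homogeneous boundary condition by constructing a lift $u_g \in W$ with prescribed trace $g$ on $\Gamma_-$, writing $u = u_g + w$, and seeking $w \in W_0 := \{v \in W : v|_{\Gamma_-} = 0\}$ with $\gradt(\bbeta w) + \gamma w = \tilde f$, where $\tilde f = f - \gradt(\bbeta u_g) - \gamma u_g \in L^2(\Omega)$. An essential preliminary is a trace identity on $W$: integration by parts yields
\[
 2(\gradt(\bbeta v), v) \;=\; \int_\Omega (\gradt\bbeta)\, v^2\, dx + \int_{\partial\Omega} (\bbeta\cdot\bn)\, v^2\, ds,
\]
so every $v \in W$ has a well-defined trace in $L^2(|\bbeta\cdot\bn|;\partial\Omega)$, and $u_g$ can be built by transporting $g$ along streamlines in case~(i) or by a cut-off extension exploiting the separation of $\Gamma_-$ and $\Gamma_+$ in case~(ii).

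For case~(ii), testing $Lw := \gradt(\bbeta w)+\gamma w$ against $w \in W_0$ and applying the trace identity gives the key estimate
\[
 (Lw,w) \;=\; \int_\Omega \bigl(\gamma + \tfrac12\gradt\bbeta\bigr) w^2 \, dx + \tfrac12\|w\|_{L^2(|\bbeta\cdot\bn|;\Gamma_+)}^2 \;\ge\; \gamma_0 \|w\|_{L^2(\Omega)}^2,
\]
which yields uniqueness immediately, an $L^2$ inf-sup through the choice $v = w$, and, combined with $\|\gradt(\bbeta w)\|_{L^2} \le \|\tilde f\|_{L^2} + \|\gamma\|_\infty \|w\|_{L^2}$, an inf-sup in the full graph norm of $W$. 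I would then invoke the Banach--Necas--Babuska theorem on $L : W_0 \to L^2(\Omega)$; the remaining hypothesis, density of the range, follows from the analogous energy estimate applied to the formal adjoint $L^* v = -\bbeta\cdot\nabla v + \gamma v$ on the test space $Y$, whose functions carry the complementary homogeneous condition on $\Gamma_+$.

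For case~(i), the assumed change of variables to streamline coordinates $(r,\by)$ with bounded Jacobian reduces the PDE to a one-dimensional ODE in $r$ along each characteristic of finite length $\ell(\hat\bx)$, with initial value $u = g$ at $\Gamma_-$. The ODE is solved explicitly by an integrating factor, and Fubini together with the bounded Jacobian and $0 < |\bbeta| < C$ yields $u \in L^2(\Omega)$; the equation then gives $\gradt(\bbeta u) = f - \gamma u \in L^2(\Omega)$, so $u \in W$. Uniqueness is immediate, since any solution of the homogeneous problem ($f = 0$, $g = 0$) must vanish along every streamline.

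The main obstacle I anticipate is the surjectivity step in case~(ii): coercivity is automatic, but density of the range requires a careful dual energy estimate on $L^*$ together with a symmetric trace argument on $\Gamma_+$. A secondary technical point, present in both cases, is the construction of the lift $u_g$ and the verification that $\tilde f \in L^2(\Omega)$, both of which rely crucially on the natural trace space being exactly $L^2(|\bbeta\cdot\bn|;\Gamma_-)$ and on the geometric hypotheses built into Assumption~\ref{asmbeta}.
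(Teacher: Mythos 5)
The paper itself offers no proof of this theorem — it only points to the literature (\cite{DHSW:12,DMMO:04} for the characteristics/ODE argument under hypothesis (i), and \cite{DHSW:12} and Chapter~2 of \cite{DE:12} for the graph-space energy argument under hypothesis (ii)) — and your sketch follows exactly those two standard routes, with the key ingredients (the $\bbeta$-weighted integration-by-parts identity, the $\gamma_0$-coercivity on $W_0$, and the dual estimate for $L^*$ needed for surjectivity in the Banach--Ne\v{c}as--Babu\v{s}ka argument) all stated correctly. The only delicate points are ones you already flag: localizing the trace estimate separately to $\Gamma_-$ and $\Gamma_+$ and constructing the lift $u_g$, both of which rely on the geometric hypotheses built into Assumption~\ref{asmbeta}.
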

The proof of the theorem with the assumption (i) is based on standard ODE theory, and can be founded in \cite{DHSW:12,DMMO:04}. For the case with the assumption (ii), the proof can be founded in \cite{DHSW:12} and Chapter 2 of \cite{DE:12}.

\begin{rem}
In \cite{DMMO:04}, it is showed that the existence and uniqueness still hold
if the requirement of the inflow boundary condition $g$ is relaxed to
$$
\int_{\Gamma_-} g^2 \ell(\bx(s))|\bbeta \cdot\bn|/|\bbeta| ds < \infty,
$$
where $\ell(\bx)$ is the length of the streamline defined by $\bbeta$ connecting the inflow boundary to the outflow boundary.
\end{rem}

\begin{rem}
An equivalent non-conservative reformulation is
\begin{align}
\bbeta\cdot \nabla u+\mu u &= f \quad \mbox{in} \,\ \O, \\ \nonumber
                        u &= g \quad \mbox{on} \,\ \Gamma_{-},
\end{align}
with $\mu = \gamma + \gradt \bbeta$.

All the methods developed in this paper can be applied to this form of equation by
changing it to the conservative formulation.
\end{rem}
\section{Least-Squares Variational Problem Based on Flux Reformulation}
\setcounter{equation}{0}

In this section, a least-squares variational problem based on flux reformulation is introduced.
The boundary condition is strongly enforced in the trial space. The existence and uniqueness of the formulation 
is discussed.
\subsection{Least-squares problem}
Introduce the flux $\bsigma = \bbeta u$, then
$$
\bsigma -\bbeta u =0 \quad \mbox{and}\quad \gradt \bsigma +\gamma u =f. 
$$
And since $\gradt \bsigma =f - \gamma u \in L^2(\O)$, 
the flux $\bsigma \in H(\divvr;\O)$.

The inflow boundary condition $u = g$ on $\Gamma_{-}$ can also be written as
$$
\bsigma \cdot \bn = (\bbeta \cdot \bn) g, \quad \mbox{on} \,\ \Gamma_{-}.
$$
Define the following spaces:
\begin{eqnarray*}
H_{g,-}(\divvr;\O) &:=& \{ \btau \in H(\divvr;\O) : \btau\cdot\bn = (\bbeta \cdot \bn) g \mbox{  on  }\Gamma_-\}, \\
H_{0,-}(\divvr;\O) &:=&  \{ \btau \in H(\divvr;\O) : \btau\cdot\bn = 0 \mbox{  on  }\Gamma_-\}.
\end{eqnarray*}
Then the least-squares variational problem is:
Seek solutions $(\bsigma,u) \in H_{g,-}(\divvr;\O) \times L^2(\O)$, such that
\beq \label{LS}
\cJ(\bsigma,u;f,g) = \inf_{(\btau,v) \in H_{g,-}(\divvr;\O) \times L^2(\O)} \cJ(\btau,v;f,g),
\eeq
with the least-squares functional $\cJ$ defined as
\beq
\cJ(\btau,v;f,g) := \|\btau -\bbeta v\|_0^2 + \|\gradt \btau + \gamma v -f\|_0^2, \quad \forall (\btau,v) \in H_{g,-}(\divvr;\O) \times L^2(\O).
\eeq

Its corresponding Euler-Lagrange formulation is: Find $(\bsigma,u) \in H_{g,-}(\divvr;\O) \times L^2(\O)$, such that
\beq \label{LS_EL}
a(\bsigma,u;\btau,v) = (f, \gradt \btau +\gamma v), \quad \forall (\btau,v) \in H_{0,-}(\divvr;\O) \times L^2(\O),
\eeq
where for all $(\btau,v), (\brho,w) \in H(\divvr;\O) \times L^2(\O)$, the bilinear form is defined as 
$$
a(\btau,v;\brho,w) = (\btau-\bbeta v, \brho -\bbeta w) + (\gradt \btau+\gamma v,\gradt \brho + \gamma w).
$$

\begin{lem} \label{norm1}
Assuming Assumption \ref{asmbeta} of $\bbeta$ and $\gamma$ is true, the following defines a norm for $(\btau,v) \in H_{0,-}(\divvr;\O)\times L^2(\O)$:
\beq
\tri (\btau,v)\tri := \left(\|\btau-\bbeta v\|_0^2 + \|\gradt\btau+\gamma v\|_0^2\right )^{1/2} = a(\btau,v;\btau,v)^{1/2}.
\eeq
\end{lem}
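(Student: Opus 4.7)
The plan is to verify the standard norm axioms. Non-negativity is immediate from the definition as a sum of squared $L^2$ norms. Homogeneity $\tri(c\btau,cv)\tri = |c|\tri(\btau,v)\tri$ and the triangle inequality follow from the corresponding properties of $\|\cdot\|_0$ applied term-by-term (each component $\btau - \bbeta v$ and $\gradt\btau + \gamma v$ is linear in $(\btau,v)$). So the only real content is definiteness: $\tri(\btau,v)\tri = 0$ implies $(\btau,v) = (0,0)$.

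First I would unpack what $\tri(\btau,v)\tri = 0$ means: namely $\btau = \bbeta v$ a.e.\ in $\Omega$, and $\gradt\btau + \gamma v = 0$ in $L^2(\Omega)$. Substituting the first relation into the second gives
\[
\gradt(\bbeta v) + \gamma v = 0 \quad \text{in } \Omega.
\]
Since $\btau \in H_{0,-}(\divvr;\O)$, the trace $\btau\cdot\bn = (\bbeta\cdot\bn)v$ vanishes on $\Gamma_-$; as $|\bbeta\cdot\bn| > 0$ on $\Gamma_-$ this forces $v = 0$ on $\Gamma_-$ in the $L^2(|\bbeta\cdot\bn|;\Gamma_-)$ sense. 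Also $v \in W$ because $\gradt(\bbeta v) = \gradt\btau \in L^2(\Omega)$. Thus $v$ solves the transport equation \eqref{transporteqn} with $f = 0$ and $g = 0$.

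At this point the cleanest finish is to invoke Theorem~\ref{thm_exist} (uniqueness), which gives $v = 0$ under either Assumption~\ref{asmbeta}(i) or (ii), and then $\btau = \bbeta v = 0$. The main obstacle is really encapsulated in that theorem; for the reader I would also indicate the direct energy argument available under Assumption~\ref{asmbeta}(ii): testing the PDE against $v$ and using the identity $v\,\gradt(\bbeta v) = \tfrac{1}{2}\gradt(\bbeta v^2) + \tfrac{1}{2}v^2 \gradt\bbeta$ together with the divergence theorem yields
\[
0 = \tfrac{1}{2}\int_{\Gamma_+} |\bbeta\cdot\bn|\, v^2\, ds + \int_\Omega \bigl(\gamma + \tfrac{1}{2}\gradt\bbeta\bigr) v^2\, dx \ge \gamma_0 \|v\|_0^2,
\]
so $v = 0$. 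Under Assumption~\ref{asmbeta}(i) the same conclusion follows by integrating the resulting ODE along streamlines starting from the zero inflow data. Either way, $v = 0$ gives $\btau = 0$, completing definiteness and hence the lemma.
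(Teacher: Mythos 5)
Your proof is correct and follows essentially the same route as the paper: reduce definiteness to the homogeneous transport equation $\gradt(\bbeta v)+\gamma v=0$ with $v=0$ on $\Gamma_-$ (using $\btau\cdot\bn=0$ and $\bbeta\cdot\bn\neq 0$ there), then invoke the uniqueness in Theorem~\ref{thm_exist} to get $v=0$ and hence $\btau=\bbeta v=0$. The supplementary energy argument you sketch under Assumption~\ref{asmbeta}(ii) is a nice addition but is not needed, since the paper delegates exactly that work to Theorem~\ref{thm_exist}.
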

\begin{proof}
The linearity and the triangle inequality are obvious for $\tri (\btau,v)\tri$. 
Now if $\tri (\btau,v)\tri=0$, it follows
$$
\btau=\bbeta v \quad\mbox{and}\quad \gradt \btau+\gamma v=0.
$$
Thus, $\gradt(\bbeta v)+\gamma v=0$.
From the facts $\btau=\bbeta v$ and $\btau\cdot\bn = 0$ on $\Gamma_-$, we get
$\bbeta\cdot\bn v = 0$ on $\Gamma_-$. 
Since $\bbeta\cdot\bn \neq 0$ on $\Gamma_-$, 
$v =0$ on $\Gamma_-$. By Theorem \ref{thm_exist}, $v=0$ is the only solution, thus $\btau =0$.
The norm $\tri \cdot\tri$ is well defined.
\end{proof}
\begin{rem}
It is also clear that 
$$
\tri (\btau,v)\tri_K := \left(\|\btau-\bbeta v\|_{0,K}^2 + \|\gradt\btau+\gamma v\|_{K,0}^2\right )^{1/2}$$
is a semi-norm on an element $K\in\cT$.
\end{rem}

Now, we show the existence and uniqueness of solutions of the least-squares problem by an indirect proof.
\begin{thm} \label{EU1}
The least-squares problem (\ref{LS}) has a unique solution $(\bsigma,u) \in H_{g,-}(\divvr;\O) \times L^2(\O)$ with $g\in L^2(|\bbeta\cdot\bn|;\Gamma_-)$ and that Assumption \ref{asmbeta} is true.
\end{thm}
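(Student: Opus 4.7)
The plan is an indirect proof that defers all the hard work to Theorem \ref{thm_exist}. The idea is to manufacture a feasible point at which the least-squares functional vanishes, which simultaneously establishes existence of a minimizer and fixes the infimum; uniqueness then follows from the norm property of Lemma \ref{norm1}.

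For existence, I would invoke Theorem \ref{thm_exist} to produce the (unique) solution $u\in W$ of \eqref{transporteqn} and set $\bsigma := \bbeta u$. Membership $\bsigma \in H(\divvr;\O)$ follows from $\gradt \bsigma = f-\gamma u \in L^2(\O)$, and the trace condition $\bsigma\cdot\bn = (\bbeta\cdot\bn)g$ on $\Gamma_-$ is inherited directly from $u|_{\Gamma_-}=g$, so $(\bsigma,u)\in H_{g,-}(\divvr;\O)\times L^2(\O)$. By construction both residuals defining $\cJ$ are identically zero, hence $\cJ(\bsigma,u;f,g)=0$. Since $\cJ\ge 0$ on the admissible set, this pair is automatically a minimizer of \eqref{LS} and the infimum equals zero.

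For uniqueness, suppose $(\bsigma_1,u_1)$ and $(\bsigma_2,u_2)$ are two minimizers in $H_{g,-}(\divvr;\O)\times L^2(\O)$. Their difference lies in the homogeneous space $H_{0,-}(\divvr;\O)\times L^2(\O)$ because they share the same inflow trace. Since the infimum is $0$ and is attained by each, the two nonnegative $L^2$-terms of $\cJ$ must each vanish at $(\bsigma_i,u_i)$, giving $\bsigma_i = \bbeta u_i$ and $\gradt \bsigma_i + \gamma u_i = f$. Subtracting, $\tri(\bsigma_1-\bsigma_2,u_1-u_2)\tri = 0$, and Lemma \ref{norm1} forces $\bsigma_1=\bsigma_2$ and $u_1=u_2$.

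I do not expect a serious obstacle: Theorem \ref{thm_exist} already encodes the difficult part of the well-posedness, and the rest is a direct verification that the constructed pair is admissible with zero functional value together with a one-line application of Lemma \ref{norm1}. The only point I would double-check is the trace identification $\bsigma\cdot\bn|_{\Gamma_-}=(\bbeta\cdot\bn)g$, which is to be understood in the duality pairing on $\p\O$ furnished by $\bsigma \in H(\divvr;\O)$; the hypothesis $g\in L^2(|\bbeta\cdot\bn|;\Gamma_-)$ together with $\bbeta\in [C^1(\overline{\O})]^d$ is exactly what is needed to make this pairing meaningful.
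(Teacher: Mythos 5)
Your proposal is correct and follows essentially the same route as the paper: existence by constructing the zero-residual pair $(\bbeta u, u)$ from Theorem \ref{thm_exist} and checking it lies in $H_{g,-}(\divvr;\O)\times L^2(\O)$, and uniqueness by reducing to Lemma \ref{norm1}. The only cosmetic difference is in the uniqueness step, where the paper subtracts the Euler--Lagrange equations to get $a(E,e;E,e)=0$ while you use the fact that the attained minimum is zero so both residuals vanish for each minimizer; both arguments land on $\tri(E,e)\tri=0$ and are equally valid.
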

\begin{proof}
For the existence, with the assumption of $g\in L^2(|\bbeta\cdot\bn|;\Gamma_-)$, by the existence Theorem \ref{thm_exist}, there exists a $u_g\in W\subset L^2({\O})$, such that $u_g = g$ on $\Gamma_-$ satisfying  (\ref{transporteqn}). Let
$\bsigma_g = \bbeta u_g$, then
$$
\|\bsigma_g\|_0 \leq \|\bbeta\|_{\infty} \|u_g\|_0, \quad
\|\gradt \bsigma_g \|_0 = \|f-\gamma u_g\|_0 \leq \|f\|_0 + \|\gamma\|_{\infty}\|u_g\|_{0}.
$$
Also, on the inflow boundary, $\bsigma_g\cdot\bn = \bbeta\cdot\bn u_g= (\bbeta\cdot\bn)g$.
Thus $\bsigma_g \in H_{g,-}(\divvr;\O)$. That is, the minimization problem has a minimizer
$(\bsigma_g,u_g)\in H_{g,-}(\divvr;\O)\times L^2(\O)$ with
$
\cJ(\bsigma_g,u_g;f,g) =0.
$

For the proof of uniqueness, let $(\bsigma_1,u_1)\in H_{g,-}(\divvr;\O)\times L^2(\O)$
and $(\bsigma_2,u_2)\in H_{g,-}(\divvr;\O)\times L^2(\O)$ be two solutions of (\ref{LS}) or (\ref{LS_EL}),
and let
$$
E = \bsigma_1 -\bsigma_2 \quad\mbox{and}\quad \quad e=u_1-u_2.
$$
It follows that
$$
a(E,e;E,e)=  a(\bsigma_1,u_1;E,e)-a(\bsigma_2,u_2;E,e)=(f, \gradt E +\gamma e)-(f, \gradt E +\gamma e)=0.
$$
So $\tri (E,e) \tri =0$, thus $E=0$ and $e=0$. The uniqueness is then proved.
\end{proof}
\begin{rem}
From the proofs of the above lemma and theorem, we can even further reduce the requirements of $\bbeta$ and $\gamma$, as long as they ensure the existence and uniqueness of the solution.

This trick of showing the existence and uniqueness of least-squares method is useful when a norm-equivalence is impossible to prove or the existence and uniqueness theory comes from different techniques. Another example of such least-squares method is its application in non-divergence equation, see \cite{QZ:19}.
\end{rem}

\section{Least-Squares Finite Element Method Based on Flux Reformulation}
\setcounter{equation}{0}
In this section, we develop a LSFEM based on 
the least-squares variational problem developed in the previous section 
and derive the a priori and a posteriori error estimates.

\subsection{Least-squares finite element method}
Let $\cT = \{K\}$ be a triangulation of $\O$ using simplicial elements.
The mesh $\cT$ is assumed to be regular. 
Also, we denote the set of edges/faces of the triangulation $\cT$ on inflow boundary $\Gamma_-$
by $\cE_{-}$.
For an element $K\in \cT$ and integer $k\geq 0$, let
$P_k(K)$ be the space of polynomials with degrees less than or equal to $k$.
Define the finite element spaces $RT_k$ and $P_k$ as follows:
$$
RT_k  :=\{\btau \in H(\divvr; \O) \colon
			\btau|_K \in P_k(K)^d +\bx P_k(K),\,\, \forall \, K\in \cT\},
$$
and
$$			
P_k :=	\{v \in L^2(\O) \colon
			v|_K \in P_k(K),\,\, \forall \, K\in \cT\}.	
$$

\begin{asm} \label{asmbdry}
({\bf Assumption on the boundary data})
For simplicity, we assume $(\bbeta\cdot\bn)g$ on $\Gamma_-$ can be approximated exactly by the trace of $RT_k$ space
on $\Gamma_-$, i.e., $g|_F\in P_k(F)$, for all faces/edges $F\in \cE_{-}$. 
\end{asm}
Note that this assumption still allows the discontinuous boundary condition, but it does require that the boundary mesh is aligned with the discontinuity. For an arbitrary $g$, we need to first interpolate or project $(\bbeta\cdot\bn)g$ to the piecewise polynomial space. 

Define
$$
RT_{k,g,-}:=\{\btau \in RT_k \colon
			\btau\cdot\bn = (\bbeta\cdot\bn)g \mbox{  on  } \Gamma_-\},
$$
then our discrete LSFEM problem is:

\noindent ({\bf LSFEM Problem})
We seek solutions $(\bsigma_h,u_h) \in RT_{k,g,-} \times P_k$,
such that
\beq \label{LSFEM}
\cJ(\bsigma_h,u_h;f,g) = \inf_{(\btau,v) \in RT_{k,g,-} \times P_k} \cJ(\btau,v;f,g).
\eeq
Or equivalently, find $(\bsigma_h,u_h) \in RT_{k,g,-} \times P_k$, such that
\beq \label{LSFEM_EL}
a(\bsigma_h,u_h;\btau,v) = (f, \gradt \btau +\gamma v), \quad \forall (\btau,v) \in RT_{k,0,-} \times P_k.
\eeq
\subsection{Interpolations and their properties}
In order to derive  a priori error estimates, we introduce some interpolations 
and their properties. Note that all properties here are local.

Denote by 
$\pi_k:  L^2 (\O) \mapsto P_k$ the $L^2$-projection onto $P_k$, we have: for $v \in H^{s}(K)$, $s>0$,
\beq \label{L2app}
	\|v-\pi_k v\|_{0,K} \leq C h^{\min\{s,k+1\}} |v|_{\min\{s,k+1\},K}, \quad\forall\,\, K\in \cT.
\eeq
For  $s>0$, denote by $I^{rt}_k: \Hdiv \cap [H^s(\O)]^d \mapsto RT_k$ 
the standard $RT$ interpolation operator \cite{BBF:13}. It satisfies 
the following approximation property: for $\btau \in H^{s}(K)^d$, $s>0$,
\beq \label{rti}
  \|\btau - I^{rt}_k \btau\|_{0,K}
  \leq C h_K^{\min\{s,k+1\}} |\btau|_{\min\{s,k+1\},K}, \quad\forall\,\, K\in \cT.
\eeq
(The estimate in (\ref{rti}) is standard for $s\geq 1$ and may be proved by 
the average Taylor series developed in \cite{DuSc:80} and the standard reference 
element technique with Piola transformation for $0<s<1$.) 
The following commutativity property is well-known:
\beq\label{comm}
 \gradt (I^{rt}_{k}\,\btau)=\pi_k\,\gradt\btau, \qquad
 \forallqq\,\btau\in\Hdiv \cap H^s(\O)^d \,\mbox{ with }\, s>0.
\eeq
Thus the following approximation property holds: for $\btau \in H^{s}(K)^d$
and $\gradt\btau \in H^s(K)$, $s>0$, for any $K\in \cT$, 
\beq \label{rti2}
  \|\gradt(\btau - I^{rt}_k \btau)\|_{0,K} 
  = \|\gradt\btau - \pi_k (\gradt\btau)\|_{0,K}
  \leq C h_K^{\min\{s,k+1\}} |\gradt\btau|_{\min\{s,k+1\},K}.
\eeq
\begin{rem}
We use $\Hdiv \cap [H^s(\O)]^d$ instead of the choice 
$\{\btau\in L^p(\O)^d$  and  $\gradt \btau \in L^2(\O)\}$ for $p>2$ or $W^{1,t}(K)$ for $t>2d/(d+2)$ in  {\em \cite{BBF:13}} because this Hilbert space based version is more suitable for our analysis.
\end{rem}

We also have the following approximation property on edges(2D)/faces(3D) $F$ of $K$: 
for $\btau \in H^{s}(K)^d$ and $\gradt\btau \in H^s(K)$, for any $K\in \cT$, 
\beq \label{rti3}
  \|(\btau - I^{rt}_k \btau)\cdot\bn\|_{0,F} 
  \leq C h_K^{\min\{s,k+1\}-1/2} (|\btau|_{\min\{s,k+1\},K}+h_K^{1/2}|\gradt\btau|_{\min\{s,k+1\},K}).
\eeq
\begin{proof}
The result follows by approximation properties \eqref{rti} and \eqref{rti2} and the following trace inequality: For all $\btau \in \{ \btau\in H(\divvr;K):\btau\cdot\bn \in L^2(F)\}$,
\beq \label{traceineq}
\|\btau\cdot\bn\|_{0,F} \leq C h_K^{-1/2} (\|\btau\|_{0,K} + h_K^{1/2}\|\gradt\btau\|_{0,K}).
\eeq
\end{proof}

\subsection{A priori error estimation}
\begin{thm} \label{thm_bestapp}
(Cea's lemma type of result)
Let $(\bsigma,u)$ be the solution of least-squares variational problem \eqref{LS}, and $(\bsigma_h,u_h)$ be the solution of LSFEM problem \eqref{LSFEM} with Assumption \ref{asmbdry} on the boundary data, 
the following best approximation result holds:
\beq
\tri(\bsigma-\bsigma_h,u-u_h)\tri \leq \inf _{(\btau_h,v_h) \in RT_{k,g,-} \times P_k}
\tri(\bsigma-\btau_h,u-v_h)\tri.
\eeq
\end{thm}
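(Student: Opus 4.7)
The plan is to follow the standard route for best-approximation results in least-squares/Galerkin methods, exploiting the fact that $\tri\cdot\tri$ is exactly the energy norm induced by the symmetric, positive bilinear form $a(\cdot,\cdot;\cdot,\cdot)$ (by Lemma on the norm property).

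First I would verify conformity of the discrete spaces. Using Assumption on the boundary data, $(\bbeta\cdot\bn)g$ agrees with the normal trace of some $RT_k$ function on every $F\in\cE_-$, hence $RT_{k,g,-}\times P_k\subset H_{g,-}(\divvr;\O)\times L^2(\O)$ and $RT_{k,0,-}\times P_k\subset H_{0,-}(\divvr;\O)\times L^2(\O)$. In particular, the discrete Euler--Lagrange equation \eqref{LSFEM_EL} is a Galerkin restriction of the continuous one \eqref{LS_EL}. Subtracting the two yields the usual Galerkin orthogonality
\[
a(\bsigma-\bsigma_h,u-u_h;\btau_h,v_h)=0\quad\forall\,(\btau_h,v_h)\in RT_{k,0,-}\times P_k.
\]

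Next, for any candidate $(\btau_h,v_h)\in RT_{k,g,-}\times P_k$, I would split the error as
\[
(\bsigma-\bsigma_h,u-u_h)=(\bsigma-\btau_h,u-v_h)+(\btau_h-\bsigma_h,v_h-u_h),
\]
and observe that $(\btau_h-\bsigma_h,v_h-u_h)\in RT_{k,0,-}\times P_k$ (the normal traces on $\Gamma_-$ cancel). Then, using $a(\cdot,\cdot;\cdot,\cdot)=\tri\cdot\tri^2$ on the diagonal, Galerkin orthogonality, and the Cauchy--Schwarz inequality for $a$, I get
\begin{align*}
\tri(\bsigma-\bsigma_h,u-u_h)\tri^2
&=a(\bsigma-\bsigma_h,u-u_h;\bsigma-\btau_h,u-v_h)\\
&\quad+a(\bsigma-\bsigma_h,u-u_h;\btau_h-\bsigma_h,v_h-u_h)\\
&=a(\bsigma-\bsigma_h,u-u_h;\bsigma-\btau_h,u-v_h)\\
&\leq \tri(\bsigma-\bsigma_h,u-u_h)\tri\,\tri(\bsigma-\btau_h,u-v_h)\tri.
\end{align*}
Dividing by $\tri(\bsigma-\bsigma_h,u-u_h)\tri$ (the trivial case being immediate) and taking the infimum over $(\btau_h,v_h)$ gives the claim.

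There is no real obstacle here beyond checking conformity of the trial/test subspaces, which is precisely the purpose of Assumption on the boundary data; without it, the continuous boundary trace $(\bbeta\cdot\bn)g$ would in general not lie in the $RT_k$ normal trace space and the splitting $\btau_h-\bsigma_h\in RT_{k,0,-}$ would fail. Everything else is the verbatim symmetric-coercive Galerkin argument.
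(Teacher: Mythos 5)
Your argument is correct and is essentially identical to the paper's: both derive Galerkin orthogonality from the conforming inclusion $RT_{k,0,-}\times P_k\subset H_{0,-}(\divvr;\O)\times L^2(\O)$, replace the test pair by $(\bsigma-\btau_h,u-v_h)$ using that $(\btau_h-\bsigma_h,v_h-u_h)$ lies in the discrete test space, and conclude with Cauchy--Schwarz for $a$. Your write-up merely makes the conformity check and the error splitting more explicit than the paper does.
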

\begin{proof}
Let $(\btau_h, v_h) \in RT_{k,0,-} \times P_k$, the following error equation holds:
$$
a(\bsigma-\bsigma_h,u-u_h; \btau_h,v_h) =0, \quad \forall (\btau_h, v_h) \in RT_{k,0,-} \times P_k.
$$
From the definition of the norm $\tri \cdot \tri$, the error equation, and Cauchy-Schwarz inequality, we have
\begin{align*}
\tri(\bsigma-\bsigma_h,u-u_h)\tri^2 & = a(\bsigma-\bsigma_h,u-u_h; \bsigma-\bsigma_h,u-u_h)  \\
& =a(\bsigma-\bsigma_h,u-u_h; \bsigma-\btau_h,u-v_h) \\
 & \leq \tri(\bsigma-\bsigma_h,u-u_h)\tri \tri(\bsigma-\btau_h,u-v_h)\tri,
\end{align*}
so $\tri(\bsigma-\bsigma_h,u-u_h)\tri \leq  \tri(\bsigma-\btau_h,u-v_h)\tri$. 
Since $(\btau_h,v_h)$ is chosen arbitrarily, the theorem is proved.
\end{proof}

Define the following piecewise function space on the triangulation $\cT$,
 \begin{align*}
H^{s}(\cT)&=\{v\in L^2(\O):v|_K\in H^{s_K}(K) \,\ \forall K\in\cT \}, \\
H^s(\divvr; \cT)&=\{\btau \in (L^2(\O))^d: \btau|_K\in (H^{s_K}(K))^d, \gradt\btau|_K\in H^{s_K}(K)\,\ \forall K\in\cT \},
\end{align*}
where $s$ is a piecewisely defined function with $s|_K=s_K>0$.
\begin{thm} \label{thm_LSFEM_apriori}
Assume the solution $(\bsigma,u)\in  H^s(\divvr; \cT)\times H^{s}(\cT)$, for $s>0$ defined piecewisely,
and $(\bsigma_h,u_h)\in RT_k\times P_k$ is the solution of the LSFEM problem \eqref{LSFEM} 
with Assumption \ref{asmbdry} on the boundary data, 
then there exists a constant $C>0$ independent of the mesh size $h$, such that
\begin{align}
\tri (\bsigma-\bsigma_h,u-u_h) \tri
 \leq C
 \sum_{K\in \cT}h^{od_K}_K \left(\|u\|_{od_K,K}+\|\bsigma\|_{od_K,K}+\|\gradt\bsigma\|_{od_K,K}
 \right),
\end{align}
where $od_K = \min(k+1,s_K)$.
\end{thm}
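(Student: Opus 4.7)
The plan is to apply the Cea-type best approximation result, Theorem \ref{thm_bestapp}, with a judicious choice of test functions, and then control the resulting interpolation error term by term using the approximation properties already collected in Section 4.2. Specifically, I would take $\btau_h = I^{rt}_k \bsigma$ and $v_h = \pi_k u$, where $I^{rt}_k$ is the Raviart--Thomas interpolant onto $RT_k$ and $\pi_k$ is the $L^2$-projection onto $P_k$.

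The first step is to verify that this pair is admissible, i.e.\ that $I^{rt}_k\bsigma \in RT_{k,g,-}$. Because the RT interpolant is defined through normal moments on faces, it preserves $\btau\cdot\bn$ on every boundary face $F$ in the sense of the $L^2$-projection onto $P_k(F)$. On the inflow boundary we have $\bsigma\cdot\bn = (\bbeta\cdot\bn)g$, and under Assumption \ref{asmbdry} this trace is already in $P_k(F)$ for every $F\in\cE_-$, so the projection is the identity and $(I^{rt}_k\bsigma)\cdot\bn = (\bbeta\cdot\bn)g$ on $\Gamma_-$. This is the only nontrivial admissibility check; once it is in place, Theorem \ref{thm_bestapp} yields
\begin{equation*}
\tri(\bsigma-\bsigma_h,u-u_h)\tri \leq \tri(\bsigma - I^{rt}_k\bsigma,\; u-\pi_k u)\tri.
\end{equation*}

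The next step is a routine splitting. By the definition of $\tri\cdot\tri$ and the triangle inequality,
\begin{equation*}
\tri(\bsigma - I^{rt}_k\bsigma, u-\pi_k u)\tri^2 \leq 2\|\bsigma - I^{rt}_k\bsigma\|_0^2 + 2\|\bbeta\|_\infty^2\|u-\pi_k u\|_0^2 + 2\|\gradt(\bsigma - I^{rt}_k\bsigma)\|_0^2 + 2\|\gamma\|_\infty^2\|u-\pi_k u\|_0^2.
\end{equation*}
Since all the norms involved are $L^2$-norms on $\O$, each of the four terms decomposes as a sum over $K\in\cT$. Applying the local approximation properties \eqref{L2app} to $u-\pi_k u$, \eqref{rti} to $\bsigma - I^{rt}_k\bsigma$, and \eqref{rti2} to $\gradt(\bsigma - I^{rt}_k\bsigma)$ on each element $K$ with $od_K = \min(k+1,s_K)$ produces the bound
\begin{equation*}
\tri(\bsigma - I^{rt}_k\bsigma, u-\pi_k u)\tri \leq C\sum_{K\in\cT} h_K^{od_K}\bigl(\|u\|_{od_K,K}+\|\bsigma\|_{od_K,K}+\|\gradt\bsigma\|_{od_K,K}\bigr),
\end{equation*}
with $C$ depending on $\|\bbeta\|_\infty$ and $\|\gamma\|_\infty$ but independent of $h$.

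The main obstacle, such as it is, is the admissibility check for the RT interpolant on $\Gamma_-$: without Assumption \ref{asmbdry} the RT interpolant would only match $(\bbeta\cdot\bn)g$ up to an $L^2$-projection onto $P_k(F)$, and one would have to either relax the definition of $RT_{k,g,-}$ or pay an extra data-oscillation term $\|(\bbeta\cdot\bn)g - \Pi_F((\bbeta\cdot\bn)g)\|_{0,\Gamma_-}$. Everything else in the argument is a direct application of Theorem \ref{thm_bestapp} together with the interpolation estimates \eqref{L2app}, \eqref{rti}, and \eqref{rti2}.
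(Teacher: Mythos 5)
Your proposal is correct and follows essentially the same route as the paper: Theorem \ref{thm_bestapp} applied with $\btau_h = I^{rt}_k\bsigma$, $v_h=\pi_k u$, followed by the triangle inequality on $\tri(\cdot,\cdot)\tri$ and the local estimates \eqref{L2app}, \eqref{rti}, \eqref{rti2}. The only difference is that you make explicit the admissibility check $I^{rt}_k\bsigma\in RT_{k,g,-}$ via Assumption \ref{asmbdry}, which the paper leaves implicit; this is a worthwhile clarification but not a different argument.
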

\begin{proof}
By the triangle inequality, it is easy to see that
$$
\tri (\btau,v) \tri_K \leq \|\btau\|_{0,K} + \|\gradt \btau\|_{0,K} + (\|\bbeta\|_{\infty,K} +\|\gamma\|_{\infty,K})\|v\|_{0,K}, \quad \forall K\in \cT.
$$
Then the theorem follows immediately after Theorem \ref{thm_bestapp}, 
the triangle inequality, 
and the approximation properties \eqref{L2app}, \eqref{rti}, and \eqref{rti2}.
\end{proof}

\begin{rem} \label{remark_apriori}
\begin{enumerate}
\item  Similar as we did in \cite{CHZ:17} for elliptic problems,
the above a priori result is local with respect to reluralities. 
It establishes the "equip-distribution of errors" foundation of adaptive mesh refinement algorithms. 
With different local regularities and different local sizes of 
the solution in respected $s_K$ norms, 
the mesh size $h_K$ can be modified to ensure an almost equal-distribution of the error. 

\item
Assume that $\bbeta$, $\gamma$, and $f$ are sufficiently smooth in an element $K$, if $u|_K \in H^{s_K}(K)$, then
$$
\bsigma|_K=(\bbeta u)|_K\in (H^{s_K}(K))^d \quad\mbox{and}\quad \gradt\bsigma|_K=(f-\gamma u)|_K\in H^{s_K}(K),
$$
so we can safely assume that $\bsigma|_K$ and $\gradt\bsigma|_K$ have the same smoothness under the condition that the data in each element are sufficiently smooth.

\item  For piecewise smooth solutions, the above theorem covers two cases. For the case that the mesh is aligned with discontinuity,
the solution is still smooth in each element $K$ with some $s_K\geq 1$, we can get optimal 
convergence result in least-squares norms with respect to the local regularity $s_K$.

For the more general case that  the finite element mesh is not aligned with discontinuity, 
$u|_K$ belongs to $H^{1/2-\epsilon}(K)$ for those elements $K$ with a passing though discontinuity for some $\epsilon>0$ as pointed out in \cite{DMMO:04}. 
This means that we cannot get order $1$ on those discontinuous elements. 
Also $RT_0\times P_0$ should be used on those elements since higher order elements will not contribute more but will introduce much more severe overshooting. And it suggests that there will be many mesh refinements along the discontinuity when an adaptive algorithm is used.

\item It is also clear that we should use $RT_k\times P_k$ pair to ensure the same order of approximation. 
For the $BDM_k\times P_k$ or $BDM_{k+1}\times P_k$, the approximation order will not be balanced and suboptimal 
like the mixed case with the non-zero diffusion \cite{Demlow:02}.

\item  For the extreme case that no smoothness is assumed, i.e., 
the exact solutions satisfy $u\in L^2(\O)$ and $\bsigma \in H(\divvr;\O)$ only, 
we can still prove the convergence without an order by the standard density argument. 
Introduce a smooth 
$\bsigma_{\epsilon} \in H_{g,-}(\divvr;\O)\cap C^{\infty}(\O)^d$ and 
a smooth $u_{\epsilon} \in C^{\infty}(\O)$ such that
$\tri (\bsigma-\bsigma_\epsilon, u-u_\epsilon) \tri \leq \epsilon$ 
for an arbitrary small $\epsilon >0$. 
The smooth $(\bsigma_\epsilon,u_\epsilon)$ can be well-approximated with a small $h$. 
Thus we can show 
$$
\tri (\bsigma-\bsigma_h, u-u_h) \tri \longrightarrow 0, \ \mbox{ as } \ h\longrightarrow 0.
$$
This analysis can also be localized element-wisely as above.

\item  In the theorem, the a priori error estimate is derived for the least-squares energy norm $\tri(\cdot,\cdot)\tri$. 
Our numerical test will also disprove the possibility of a coercivity with respect to the standard norm:
$$
\tri (\btau, v) \tri^2 \geq C \left(\|\btau\|_{H(\divvr;\O)}^2 + \|v\|_0^2\right), \quad \forall (\btau,v) \in H_{0,-}(\divvr;\O) \times L^2(\O),
$$
or the weak discrete version with an $h$-independent $C>0$,
$$
\tri (\btau, v) \tri^2 \geq C \left(\|\btau\|_{H(\divvr;\O)}^2 + \|v\|_0^2\right), \quad \forall (\btau,v) \in RT_{k,0,-} \times P_k.
$$
Because if one of such coercivity results hold, 
one can show that the error measured in $H(\divvr;\O)\times L^2(\O)$ norm will 
be optimal for piecewise smooth solutions with discontinuity aligned mesh, which is not the case in our numerical tests 7.4, 7.6 and the Peterson example 7.5. 
\end{enumerate}
\end{rem}

\subsection{A posteriori error estimation}
The least-squares functional can be used 
to define the following fully computable a posteriori  local indicator 
and global error estimator:
$$
\eta_K^2 := \|\bsigma_h-\bbeta u_h\|_{0,K}^2 + \|\gradt \bsigma_h+\gamma u_h -f\|_{0,}^2, 
\quad \forall K\in \cT,
$$
and
$$
\eta^2 := \sum_{K\in\cT}\eta_K^2 =  \|\bsigma_h-\bbeta u_h\|_0^2 + \|\gradt \bsigma_h+\gamma u_h -f\|_0^2.
$$

\begin{thm}
The a posteriori error estimator $\eta$ is exact with respect to the least-squares norm $\tri (\cdot,\cdot)\tri$:
$$
\eta = \tri (\bsigma-\bsigma_h,u-u_h) \tri \quad \mbox{and} \quad \eta_K = \tri (\bsigma-\bsigma_h,u-u_h) \tri_K.
$$
The following local efficiency bound is also true with $C>0$ independent of the mesh size $h$:
$$
C \eta_K \leq \|\bsigma-\bsigma_h\|_{H(\divvr;K)} + \|u-u_h\|_{0,K}, \quad \forall K\in\cT.
$$
\end{thm}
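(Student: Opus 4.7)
The plan is to exploit the fact that the exact pair $(\bsigma,u)$ solves $\bsigma-\bbeta u=0$ and $\gradt\bsigma+\gamma u=f$ pointwise (this is built into the reformulation \eqref{1stordersys} together with Theorem \ref{thm_exist}), so the residuals appearing in $\eta_K$ are nothing but the residuals evaluated at the Galerkin error. Once this identification is made, both the exactness statement and the efficiency bound become one-line consequences of the triangle inequality.

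For the exactness $\eta_K = \tri(\bsigma-\bsigma_h,u-u_h)\tri_K$, I would first rewrite
\[
\bsigma_h-\bbeta u_h = -\bigl[(\bsigma-\bsigma_h)-\bbeta(u-u_h)\bigr],
\qquad
\gradt\bsigma_h+\gamma u_h-f = -\bigl[\gradt(\bsigma-\bsigma_h)+\gamma(u-u_h)\bigr],
\]
using $\bsigma=\bbeta u$ and $f=\gradt\bsigma+\gamma u$. Squaring, integrating on $K$, and summing the two contributions yields exactly $\tri(\bsigma-\bsigma_h,u-u_h)\tri_K^2$, which is the definition of the energy semi-norm on $K$ from the remark after Lemma \ref{norm1}. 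Summing over $K\in\cT$ gives the global identity $\eta=\tri(\bsigma-\bsigma_h,u-u_h)\tri$, with effectivity constant one.

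For the local efficiency, starting from the same identity $\eta_K=\tri(\bsigma-\bsigma_h,u-u_h)\tri_K$, I would apply the triangle inequality term by term:
\[
\|(\bsigma-\bsigma_h)-\bbeta(u-u_h)\|_{0,K}\le\|\bsigma-\bsigma_h\|_{0,K}+\|\bbeta\|_{\infty,K}\,\|u-u_h\|_{0,K},
\]
\[
\|\gradt(\bsigma-\bsigma_h)+\gamma(u-u_h)\|_{0,K}\le\|\gradt(\bsigma-\bsigma_h)\|_{0,K}+\|\gamma\|_{\infty,K}\,\|u-u_h\|_{0,K}.
\]
Adding the squares and using $\|\bsigma-\bsigma_h\|_{0,K}+\|\gradt(\bsigma-\bsigma_h)\|_{0,K}\le\sqrt{2}\,\|\bsigma-\bsigma_h\|_{H(\divvr;K)}$ yields the stated bound, with the generic constant $C$ depending only on $\|\bbeta\|_{\infty,\Omega}$ and $\|\gamma\|_{\infty,\Omega}$ (both finite by the standing assumptions on the coefficients).

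There is essentially no obstacle: unlike a posteriori analyses for DG or mixed methods, no bubble-function argument, no inverse inequality, and no data oscillation term appears, because the least-squares functional itself is algebraically the energy norm of the error. The only mild point worth noting in the write-up is that the constant $C$ in the efficiency bound is not universal but depends on $\|\bbeta\|_\infty$ and $\|\gamma\|_\infty$; this is harmless and consistent with the spirit of LSFEM a posteriori estimation.
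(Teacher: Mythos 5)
Your proposal is correct and follows essentially the same route as the paper: substitute the exact relations $\bsigma=\bbeta u$ and $f=\gradt\bsigma+\gamma u$ to identify the computable residuals with the error residuals (giving exactness with effectivity constant one), then apply the triangle inequality for the local efficiency bound, with $C$ depending on $\|\bbeta\|_\infty$ and $\|\gamma\|_\infty$. Your write-up is in fact slightly more careful than the paper's, which leaves the efficiency step as a one-line remark and has a harmless sign slip in the displayed identity.
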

\begin{proof}
Note that the exact solutions satisfy $\bsigma = \bbeta u$ and $f = \gradt \bsigma + \gamma u$, so
\begin{eqnarray*}
\eta^2 &=& 
\|\bsigma_h-\bbeta u_h\|_0^2 + \|\gradt \bsigma_h+\gamma u_h -f\|_0^2\\
&=& \|\bsigma-\bsigma_h+\bbeta (u-u_h)\|_0^2 + \|\gradt (\bsigma-\bsigma_h)+\gamma (u-u_h)\|_0^2\\
&=&\tri (\bsigma-\bsigma_h,u-u_h) \tri^2. 
\end{eqnarray*}
The proof of the local exactness is identical.

With the triangle inequality, the local efficiency bound for the standard norms can be easily proved.
\end{proof}

\begin{rem}
Due to the fact that the least-squares functional norm is not equivalent to the standard $H(\divvr)$-$L^2$ norm, it is 
impossible to get the corresponding reliability result w.r.t. the $H(\divvr)$-$L^2$ norm.
\end{rem}


\section{Least-Squares Variational Problems with Boundary Functional}
\setcounter{equation}{0}
In this section, in stead of treating the inflow boundary condition as an essential condition, we develop 
a least-squares method with boundary functional in the free space.

In this section, we assume the inflow boundary condition is not degenerate, 
$$
|\bbeta\cdot\bn| \geq c>0 \mbox{   on   } \Gamma_-.
$$
\begin{rem}
This assumption is essential to guarantee the optimal convergence rate, see 
the proof of Theorem \ref{theorem_LSFEMB_apriori}.
\end{rem}

Define a weight-dependent inner product and its corresponding norm:
$$
(v,w)_{\omega,\Gamma_-} := \sum_{F\in \cE_{-}} \int_F \dfrac{\omega}{|\bbeta\cdot\bn|} vw dx \quad\mbox{and}\quad
\|v\|_{\omega,\Gamma_-} := (v,v)_{\omega,\Gamma_-}^{1/2}.
$$
We use two choices here:
$$
\omega_1 = 1 \quad \mbox{and} \quad \omega_2 = \a_F h_F.
$$
The following notation is also used to denote the norm on an edge(2D)/face(3D) of an element $K$:
$$
\|v\|_{\omega,F} := \left(\int_F \dfrac{\omega}{|\bbeta\cdot\bn|} v^2 dx\right)^{1/2}.
$$

\begin{rem}
Here, $\a_F>0$ is a big enough but $h$-independent constant to ensure the balance of terms.
In general, $\a_F$ can be chosen depending on $F$ and possibly also depending on the coefficients.  
The constant $\a_F$ comes from the constant 
that appears in the trace inequality \eqref{traceineq}.
In some extreme cases, we find it is necessary to
choose $\a_F$ to be some constant large enough ($10$ is large enough in our numerical tests) 
to ensure that the boundary condition is not too weakly enforced. 
See detailed discussion in our numerical test 7.6.1.

In this paper, the choice $\a_F = 10$ is suggested and used in numerical tests.

The choice of $\omega_1=1$ does not have the above issues, but the convergence order 
is less optimal near the inflow boundary, see our discussion in the a priori error estimates Theorem \ref{theorem_LSFEMB_apriori}.
\end{rem}
Let
$$
\Sigma := \big\{\btau\in H(\divvr;\O) : \btau \cdot\bn \in L^2(\Gamma_-)  \big\}
$$
with the weight-dependent norm
$$
\|\btau\|_{\Sigma}^2 := \|\btau\|_{0,\O}^2 + \|\gradt\btau\|_{0,\O}^2 + \|\btau\cdot\bn\|_{\omega,\Gamma_-}^2.
$$
Define the following least-squares functional $\cL$ for all $(\btau,v) \in \Sigma \times L^2(\O)$,
\beq
\cL_i(\btau,v;f,g) := \|\btau -\bbeta v\|_0^2 + \|\gradt \btau + \gamma u -f\|_0^2 +\| \btau\cdot \bn - \bbeta\cdot\bn g\|_{\omega_i,\Gamma_-}^2, i =1,2.
\eeq

\noindent({\bf Least-Squares Problems with Boundary Functional})
We seek solutions $(\bsigma,u) \in \Sigma \times L^2(\O)$, such that
\beq \label{LSB}
\cL_i(\bsigma,u;f,g) = \inf_{(\btau,v) \in \Sigma \times L^2(\O)} \cL_i(\btau,v;f,g), \quad i=1,2.
\eeq
Its corresponding Euler-Lagrange formulation is: Find $(\bsigma,u) \in \Sigma \times L^2(\O)$, such that
\beq \label{LS2_EL}
b_i(\bsigma,u;\btau,v) = (f, \gradt \btau +\gamma v)+(\bbeta\cdot\bn g,\btau\cdot\bn)_{\omega_i,\Gamma_-}, \quad \forall (\btau,v) \in \Sigma \times L^2(\O), i=1,2,
\eeq
where, for all  $(\btau,v), (\brho,w) \in \Sigma \times L^2(\O)$, the bilinear form is: 
$$
b_i(\btau,v;\brho,w) := (\btau-\bbeta v, \brho -\bbeta w) + (\gradt \btau+\gamma v,\gradt \brho + \gamma w)
+ (\bsigma\cdot\bn,\btau\cdot\bn)_{\omega_i,\Gamma_-}, i =1,2.
$$
Note that for $F\in \cE_-$, $\bbeta \cdot \bn <0$, so 
$$
(\bbeta\cdot\bn g,\btau\cdot\bn)_{\omega,\Gamma_-} = 
\sum_{F\in \cE_{-}} \int_F \dfrac{\omega}{|\bbeta\cdot\bn|} (\bbeta\cdot\bn g)(\btau\cdot\bn) dx 
= -\sum_{F\in \cE_{-}}\omega \int_F\btau\cdot\bn g dx.
$$

\begin{lem} \label{norm2}
Assuming that the data $\bbeta$ and $\gamma$ satisfy Assumptions \ref{asmbeta},
the following defines a norm for $(\btau,v) \in \Sigma\times L^2(\O)$:
\beq
\tri (\btau,v)\tri_{B} := (\|\btau-\bbeta v\|_0^2 + \|\gradt\btau+\gamma v\|_0^2 + \|\btau\cdot\bn\|_{\omega,\Gamma_-}^2)^{1/2}.
\eeq
\end{lem}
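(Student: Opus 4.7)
The plan is to mimic the proof of Lemma \ref{norm1}, adapting only the step where the boundary condition was previously built into the space and is now encoded in the extra norm term $\|\btau\cdot\bn\|_{\omega,\Gamma_-}^2$. Linearity and the triangle inequality follow immediately from the fact that each of the three constituent pieces is a (semi)norm, so the only nontrivial point is positive definiteness.

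Assume $\tri(\btau,v)\tri_B = 0$. Then simultaneously
\[
\btau = \bbeta v \quad \mbox{in } \O, \qquad \gradt\btau + \gamma v = 0 \quad \mbox{in } \O, \qquad \btau\cdot\bn = 0 \quad \mbox{on } \Gamma_-
\]
(the last identity holds because $\omega_i/|\bbeta\cdot\bn|$ is strictly positive a.e.\ on $\Gamma_-$, using both $\omega_i > 0$ and the standing assumption $|\bbeta\cdot\bn|\ge c>0$ on $\Gamma_-$ of this section). Substituting the first relation into the second gives $\gradt(\bbeta v) + \gamma v = 0$, so $v$ satisfies the homogeneous transport equation.

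For the boundary trace, combining $\btau = \bbeta v$ with $\btau\cdot\bn = 0$ on $\Gamma_-$ yields $(\bbeta\cdot\bn)\,v = 0$ on $\Gamma_-$. The non-degeneracy assumption $|\bbeta\cdot\bn|\ge c>0$ on $\Gamma_-$ then forces $v = 0$ on $\Gamma_-$. Hence $v$ solves the homogeneous transport equation with homogeneous inflow data, so by the uniqueness part of Theorem \ref{thm_exist} we conclude $v\equiv 0$, and therefore $\btau = \bbeta v \equiv 0$ as well.

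I expect no real obstacle: the only subtle point is justifying that the boundary piece indeed produces $\btau\cdot\bn = 0$ on $\Gamma_-$ pointwise a.e., which is exactly where the section's standing hypothesis $|\bbeta\cdot\bn|\ge c>0$ on $\Gamma_-$ enters; without it the weight $\omega_i/|\bbeta\cdot\bn|$ could blow up or the argument extracting $v=0$ on $\Gamma_-$ would fail. Everything else is identical in structure to Lemma \ref{norm1}, with Theorem \ref{thm_exist} again supplying the final uniqueness conclusion.
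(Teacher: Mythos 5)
Your proposal is correct and follows exactly the paper's route: the paper's own proof simply notes that $\tri(\btau,v)\tri_B=0$ yields $\btau=\bbeta v$, $\gradt\btau+\gamma v=0$ in $\O$ and $\btau\cdot\bn=0$ on $\Gamma_-$, and then repeats the argument of Lemma \ref{norm1} (non-degeneracy of $\bbeta\cdot\bn$ on $\Gamma_-$ gives $v=0$ there, and uniqueness from Theorem \ref{thm_exist} gives $v\equiv 0$, hence $\btau\equiv 0$). Your extra remark justifying that the weighted boundary term forces $\btau\cdot\bn=0$ a.e.\ is a harmless elaboration of the same argument.
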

\begin{proof}
The proof of the lemma is almost identical to that of Lemma \ref{norm1} by realizing that
if $\tri (\btau,v)\tri_B=0$, we have
$$
\btau=\bbeta v \quad\mbox{and}\quad \gradt \btau+\gamma v=0 \mbox{ in  } \O, \quad
\btau \cdot\bn = 0 \mbox{  on  } \Gamma_-.
$$
\end{proof}
\begin{rem}
Similarly,
$$
\tri (\btau,v)\tri_{B,K} := \left(\|\btau-\bbeta v\|_{0,K}^2 + \|\gradt\btau+\gamma v\|_{0,K}^2
+ \sum_{F\in\p K \cap \cE_-}\|\btau\cdot\bn\|_{\omega,F}^2\right)^{1/2}
$$
is a semi-norm on an element $K\in\cT$.

Notations  $\tri (\btau,v)\tri_{B,i}$ and $\tri (\btau,v)\tri_{B,i,K}$ with $i=1$ or $2$ are used to denote the (semi-)norms with weights $\omega = \omega_i$, $i= 1$ or $2$.
\end{rem}

\begin{thm}
The least-squares problem (\ref{LSB}) has a unique solution $(\bsigma,u) \in \Sigma \times L^2(\O)$ with
the assumption $g\in L^2(|\bbeta\cdot\bn|;\Gamma_-)$ and the data $\bbeta$ and $\gamma$ satisfying Assumptions \ref{asmbeta}.
\end{thm}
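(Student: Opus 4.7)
The plan is to mirror the indirect argument used for Theorem \ref{EU1}, replacing the essential boundary treatment by a verification that the candidate solution lies in $\Sigma$ with finite boundary functional. For existence, I would invoke Theorem \ref{thm_exist} to produce $u_g \in W$ solving the transport equation \eqref{transporteqn} with datum $g \in L^2(|\bbeta\cdot\bn|;\Gamma_-)$, and then set $\bsigma_g := \bbeta u_g$. As in the proof of Theorem \ref{EU1}, $\bsigma_g \in H(\divvr;\O)$ with $\|\bsigma_g\|_0 \le \|\bbeta\|_\infty\|u_g\|_0$ and $\gradt\bsigma_g = f-\gamma u_g \in L^2(\O)$, and on $\Gamma_-$ we have $\bsigma_g\cdot\bn = (\bbeta\cdot\bn)g$.

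The new point to verify is that $\bsigma_g \cdot \bn \in L^2(\omega_i/|\bbeta\cdot\bn|;\Gamma_-)$ so that the full norm $\|\bsigma_g\|_\Sigma$ is finite and so that the boundary functional term in $\cL_i$ vanishes at $(\bsigma_g,u_g)$. A direct computation gives
\[
\|\bsigma_g\cdot\bn\|_{\omega_i,\Gamma_-}^2 \;=\; \sum_{F\in\cE_-}\int_F \frac{\omega_i}{|\bbeta\cdot\bn|}(\bbeta\cdot\bn)^2 g^2\,dx \;=\; \sum_{F\in\cE_-}\int_F \omega_i\,|\bbeta\cdot\bn|\,g^2\,dx,
\]
which is finite for $\omega_1=1$ directly from the hypothesis $g\in L^2(|\bbeta\cdot\bn|;\Gamma_-)$, and for $\omega_2 = \alpha_F h_F$ from the same hypothesis together with boundedness of $\alpha_F h_F$. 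Moreover $\bsigma_g\cdot\bn - \bbeta\cdot\bn\,g = 0$ on $\Gamma_-$, so $\cL_i(\bsigma_g,u_g;f,g) = 0$, showing that $(\bsigma_g,u_g) \in \Sigma \times L^2(\O)$ is a minimizer of \eqref{LSB}.

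For uniqueness I would argue exactly as in Theorem \ref{EU1}. Given two solutions $(\bsigma_1,u_1)$ and $(\bsigma_2,u_2)$ of the Euler--Lagrange equation \eqref{LS2_EL}, the difference $(E,e) := (\bsigma_1-\bsigma_2, u_1-u_2)$ satisfies
\[
b_i(E,e;E,e) \;=\; b_i(\bsigma_1,u_1;E,e)-b_i(\bsigma_2,u_2;E,e) \;=\; 0,
\]
because the right-hand sides in \eqref{LS2_EL} cancel. By Lemma \ref{norm2}, $b_i(E,e;E,e) = \tri(E,e)\tri_{B,i}^2$ is the square of a norm on $\Sigma \times L^2(\O)$, so $E=0$ and $e=0$.

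I do not anticipate any serious obstacle; the only delicate point is the verification that the candidate $(\bsigma_g,u_g)$ has a finite weighted boundary seminorm, since for $\omega_1=1$ this relies precisely on the hypothesized trace regularity of $g$ (division by $|\bbeta\cdot\bn|$ is exactly compensated by the factor $(\bbeta\cdot\bn)^2$ coming from $\bsigma_g\cdot\bn$). Should a variant require degenerating $|\bbeta\cdot\bn|$ at the inflow, this calculation would fail; in the present setting $|\bbeta\cdot\bn|\ge c>0$ on $\Gamma_-$ is assumed throughout this section, so both the existence argument and Lemma \ref{norm2} apply verbatim.
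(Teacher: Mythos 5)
Your proposal is correct and follows exactly the route the paper intends: the paper omits this proof, stating only that it is ``very similar to that of Theorem \ref{EU1},'' and your argument is precisely that adaptation, with the one genuinely new step (finiteness of the weighted boundary term for the candidate $(\bsigma_g,u_g)$, via $\int_F \frac{\omega_i}{|\bbeta\cdot\bn|}(\bbeta\cdot\bn)^2g^2 = \int_F \omega_i|\bbeta\cdot\bn|g^2 < \infty$) carried out correctly. No gaps.
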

\begin{proof}
The proof of existence and uniqueness is very similar to that of Theorem \ref{EU1} and thus 
we omit it here.
\end{proof}

\section{LSFEMs with Boundary Functional}
\setcounter{equation}{0}
In this section, we develop LSFEMs based on 
the least-squares variational problems with boundary functional 
developed in the previous section 
and derive the a priori and a posteriori error estimates.

\subsection{LSFEM-B problems}
We seek solutions $(\bsigma_h,u_h) \in RT_k \times P_k$,
such that
\beq \label{LSFEMB}
\cL_i(\bsigma_h,u_h;f,g) = \inf_{(\btau,v) \in RT_k \times P_k} \cL_i(\btau,v;f,g),\quad i=1,2.
\eeq
Or equivalently, find $(\bsigma_h,u_h) \in RT_k \times P_k$, such that
\beq \label{LSFEMB_EL}
b_i(\bsigma_h,u_h;\btau,v) = (f, \gradt \btau +\gamma v)+(\bbeta\cdot\bn g,\btau\cdot\bn)_{\omega_i,\Gamma_-}, \quad \forall (\btau,v) \in RT_k \times P_k, i=1,2.
\eeq

\subsection{A priori error estimation}
\begin{thm} \label{thm_bestapp_B}
(Cea's lemma type of result)
Let $(\bsigma,u)$ be the solution of least-squares variational problem 
with boundary term \eqref{LSB}, 
and $(\bsigma_h,u_h)$ be the solution of LSFEM problem \eqref{LSFEMB}, 
the following best approximation result holds:
\beq
\tri (\bsigma-\bsigma_h,u-u_h)\tri_B \leq \inf _{(\btau_h,v_h) \in RT_{k} \times P_k}
\tri (\bsigma-\btau_h,u-v_h)\tri_B
\eeq
\end{thm}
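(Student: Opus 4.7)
The plan is to imitate almost verbatim the proof of Theorem \ref{thm_bestapp}, since the only change from that setting is the presence of the extra boundary term in the bilinear form and the replacement of the essential inflow condition by a penalty term, which makes the trial/test space conforming (i.e., $RT_k \times P_k \subset \Sigma \times L^2(\O)$). The starting point is to observe that $b_i(\cdot,\cdot;\cdot,\cdot)$ is a symmetric bilinear form whose induced norm is precisely $\tri\,\cdot\,\tri_B$, by Lemma \ref{norm2}, so a standard orthogonal-projection argument will apply.

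First I would establish Galerkin orthogonality. The continuous solution $(\bsigma,u)$ satisfies \eqref{LS2_EL} for every $(\btau,v)\in\Sigma\times L^2(\O)$, and the discrete solution $(\bsigma_h,u_h)$ satisfies \eqref{LSFEMB_EL} for every $(\btau,v)\in RT_k\times P_k$. Because $RT_k\times P_k\subset\Sigma\times L^2(\O)$, subtracting the two identities restricted to discrete test functions yields
\[
b_i(\bsigma-\bsigma_h,\,u-u_h;\,\btau_h,v_h)=0,\qquad\forall\,(\btau_h,v_h)\in RT_k\times P_k.
\]
This is the key identity replacing the role that \eqref{LS_EL} played in Theorem \ref{thm_bestapp}; note that here the orthogonality includes the boundary functional contribution, which is exactly what makes the argument go through without having to split off the boundary.

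Next, for any chosen $(\btau_h,v_h)\in RT_k\times P_k$, I would expand and use the orthogonality:
\[
\tri(\bsigma-\bsigma_h,u-u_h)\tri_B^{2} = b_i(\bsigma-\bsigma_h,u-u_h;\bsigma-\bsigma_h,u-u_h)
= b_i(\bsigma-\bsigma_h,u-u_h;\bsigma-\btau_h,u-v_h),
\]
where the second equality follows by inserting and subtracting $(\btau_h,v_h)$ and using the Galerkin identity above. Then Cauchy--Schwarz applied to $b_i$ (which is the inner product generating the norm $\tri\,\cdot\,\tri_B$) gives
\[
\tri(\bsigma-\bsigma_h,u-u_h)\tri_B^{2}\leq \tri(\bsigma-\bsigma_h,u-u_h)\tri_B\,\tri(\bsigma-\btau_h,u-v_h)\tri_B.
\]
Dividing by $\tri(\bsigma-\bsigma_h,u-u_h)\tri_B$ and taking the infimum over $(\btau_h,v_h)\in RT_k\times P_k$ closes the argument.

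There is no real obstacle in the proof; the only point where one has to be slightly careful is verifying that $b_i$ is indeed a genuine inner product (not merely a semi-definite form) so that the Cauchy--Schwarz step is legitimate, but this is exactly the content of Lemma \ref{norm2}. Everything else is a clean replay of the conforming-space Cea lemma.
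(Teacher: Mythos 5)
Your proposal is correct and follows exactly the route the paper takes: the paper's own proof of Theorem \ref{thm_bestapp_B} simply states that it is identical to that of Theorem \ref{thm_bestapp}, i.e., Galerkin orthogonality for $b_i$ (valid since $RT_k\times P_k\subset\Sigma\times L^2(\O)$), followed by the norm expansion, Cauchy--Schwarz, and taking the infimum. Your added remark that the boundary functional is absorbed into the orthogonality, and that Lemma \ref{norm2} legitimizes the Cauchy--Schwarz step, is consistent with the paper's setup.
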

\begin{proof}
The proof is identical to that of Theorem \ref{thm_bestapp}.
\end{proof}

Define the collections of elements with edges(2D)/faces(3D) on the inflow boundary as:
$$
\cT_- = \{ K : K\in \cT, \p K \cap \Gamma_- \neq \emptyset  \}.
$$

\begin{thm} \label{theorem_LSFEMB_apriori}
Assume the exact solution $(\bsigma,u)\in  H^s(\divvr; \cT)\times H^{s}(\cT)$, for $s>0$ defined piecewisely.
Assume $(\bsigma_{h,i},u_{h,i}) \in RT_k\times P_k$ is the solution of LSFEM-B problem \eqref{LSFEMB} with weight $\omega_i$, $i=1$ or $2$, then
there exists a constant $C>0$ independent of the mesh size $h$, such that
\begin{align} \label{apriori_B1}
\tri (\bsigma-\bsigma_{h,1},u-u_{h,1}) \tri_{B,1}
 \leq C
 \sum_{K\in \cT }h^{od_K}_K \left(\|u\|_{od_K,K}+\|\bsigma\|_{od_K,K}+\|\gradt\bsigma\|_{od_K,K}  \right) \\ \nonumber
 +C\sum_{K\in \cT_-}h^{od_K-1/2}_K \|\bsigma\|_{od_K,K},
 \\ \label{apriori_B2}
\tri (\bsigma-\bsigma_{h,2},u-u_{h,2}) \tri_{B,2}
 \leq C
 \sum_{K\in \cT}h^{od_K}_K \left(\|u\|_{od_K,K}+\|\bsigma\|_{od_K,K}+\|\gradt\bsigma\|_{od_K,K}
 \right).
\end{align}
where $od_K = \min(k+1,s_K)$.
\end{thm}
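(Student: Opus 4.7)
The plan is to combine the Cea-type bound of Theorem \ref{thm_bestapp_B} with a canonical interpolant, then separately estimate the interior and boundary contributions. The two estimates \eqref{apriori_B1} and \eqref{apriori_B2} will differ only through the boundary-weight contribution, and this is where the choice between $\omega_1=1$ and $\omega_2=\a_F h_F$ shows up.

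First I would select the natural interpolant $(\btau_h, v_h) = (I_k^{rt}\bsigma, \pi_k u)$ and plug it into Theorem \ref{thm_bestapp_B} to obtain
$$
\tri (\bsigma-\bsigma_{h,i},u-u_{h,i}) \tri_{B,i}^2 \leq \|\bE - \bbeta e\|_0^2 + \|\gradt \bE + \gamma e\|_0^2 + \|\bE\cdot\bn\|_{\omega_i,\Gamma_-}^2,
$$
where $\bE := \bsigma - I_k^{rt}\bsigma$ and $e := u - \pi_k u$. The two interior terms are bounded using the triangle inequality together with the approximation properties \eqref{L2app}, \eqref{rti}, and the commutativity-based estimate \eqref{rti2}, which yields summands of order $h_K^{od_K}$ in $\|u\|_{od_K,K}$, $\|\bsigma\|_{od_K,K}$, and $\|\gradt\bsigma\|_{od_K,K}$. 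This part is essentially the same calculation that produced Theorem \ref{thm_LSFEM_apriori}.

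The distinction between the two weights enters through the boundary functional. For each $F\subset\p K$ with $F\in\cE_-$ and $K\in\cT_-$, I would invoke the assumption $|\bbeta\cdot\bn|\geq c>0$ on $\Gamma_-$ to reduce the weighted edge norm to a multiple of $\|\bE\cdot\bn\|_{0,F}$, and then apply the trace-type approximation bound \eqref{rti3}. For $\omega_1=1$ this gives
$$
\|\bE\cdot\bn\|_{\omega_1,F}^2 \leq C\, h_K^{2od_K-1}\bigl(|\bsigma|_{od_K,K}^2 + h_K |\gradt\bsigma|_{od_K,K}^2\bigr),
$$
which produces exactly the additional suboptimal term $h_K^{od_K-1/2}\|\bsigma\|_{od_K,K}$ summed over $K\in\cT_-$ in \eqref{apriori_B1}. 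For $\omega_2=\a_F h_F$ the extra factor $h_F$ raises the order by one, so the square root becomes $h_K^{od_K}$ and this boundary contribution is absorbed into the same order as the interior terms, yielding the fully optimal bound \eqref{apriori_B2}.

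The main obstacle I anticipate is the boundary term: one must be careful that the scaled trace inequality \eqref{rti3} is applied element-wise with the correct powers of $h_K$, and that the lower bound $|\bbeta\cdot\bn|\geq c$ (the assumption flagged in the Remark after the non-degeneracy hypothesis) is genuinely needed to pass from the weighted edge norm to the unweighted $L^2(F)$ norm. Once these bookkeeping points are handled, the two cases \eqref{apriori_B1} and \eqref{apriori_B2} follow by summing over $K\in\cT$ (interior) and $K\in\cT_-$ (boundary) and taking the square root.
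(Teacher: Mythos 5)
Your proposal is correct and follows essentially the same route as the paper's proof: Cea's lemma (Theorem \ref{thm_bestapp_B}) with the interpolant $(I_k^{rt}\bsigma,\pi_k u)$, interior terms handled exactly as in Theorem \ref{thm_LSFEM_apriori}, and the boundary term reduced to $\|(\bsigma-I_k^{rt}\bsigma)\cdot\bn\|_{0,F}$ via the non-degeneracy of $\bbeta\cdot\bn$ and then bounded by \eqref{rti3}, with the factor $h_F$ in $\omega_2$ accounting for the gain of the half order. You also correctly note that the $h_K^{1/2}|\gradt\bsigma|_{od_K,K}$ part of the trace bound is absorbed into the optimal-order interior sum, which is why only $\|\bsigma\|_{od_K,K}$ appears in the extra term of \eqref{apriori_B1}.
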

\begin{proof}
We only need to handle the boundary term, the rest of terms are identical to that of 
Theorem \ref{thm_LSFEM_apriori}.

Let $\btau_h = I_k^{rt}\bsigma$,
by the trace inequality \eqref{traceineq} and approximation property \eqref{rti3}, we have
\begin{eqnarray*}
\|(\bsigma -\btau_h)\cdot\bn\|_{0,F} 
&\leq & h_F^{-1/2}(\|\bsigma -\btau_h\|_{0,K}+h_K^{1/2}\|\gradt(\bsigma -\btau_h)\|_{0,K}) \\
&\leq& C h^{od_K-1/2}_K (\|\bsigma\|_{od_K,K}+h_K^{1/2}\|\gradt\bsigma\|_{od_K,K}).
\end{eqnarray*}

By our assumption on $\bbeta\cdot\bn$, there exits a constant  $C>0$ independent of the mesh size $h$,
\begin{eqnarray*}
\|(\bsigma -\btau_h)\cdot\bn\|_{\o_1,\Gamma_-}  &= &\sum_{F\in\cE_-} 
\|(\bsigma-\btau_h)\cdot\bn\|_{\o_1,F}^2 \leq 
C\sum_{F\in\cE_-}\|(\bsigma -\btau_h)\cdot\bn\|_{0,F}^2 \\
&\leq& C \sum_{K \in \cT_-} h^{2 od_K-1}_K \|\bsigma\|_{od_K,K}^2+
 \sum_{K \in \cT_-} h^{2 od_K}_K \|\gradt\bsigma\|_{od_K,K}^2.
\end{eqnarray*}
Combined with interior terms, we proved \eqref{apriori_B1}.

By our assumptions on $\a_F$ and $\bbeta\cdot\bn$, there exits a constant  $C>0$ independent of the mesh size $h$,
\begin{eqnarray*}
\|(\bsigma -\btau_h)\cdot\bn\|_{\omega_2,\Gamma_-}  &= &\sum_{F\in\cE_-} 
\|(\bsigma-\btau_h)\cdot\bn\|_{\o_2,F}^2 \leq 
C\sum_{F\in\cE_-}h_F \|(\bsigma -\btau_h)\cdot\bn\|_{0,F}^2 \\
&\leq& C \sum_{K \in \cT} h^{2 od_K}_K (\|\bsigma\|_{od_K,K}^2+h_K\|\gradt\bsigma\|_{od_K,K}^2).
\end{eqnarray*}
Combined with interior terms, we proved \eqref{apriori_B2}.
\end{proof}
\begin{rem}
For the case the weight $\omega=1$, we see there is a half-order loss in the error analysis 
for those elements in $\cT_-$. Compared with the number of elements in $\cT$, the number 
of elements in $\cT_-$ is small 
and such sub-optimality often is non-observable in our numerical tests. 

For the case the weight $\omega=\omega_2$, even though the convergence order is optimal, we do add an uncertainty of choosing  $\a_F$. A too small $\a_F$ will lead to imbalance of terms and will cause the boundary condition un-resolved, which will make the  adaptive algorithms fail, see our numerical test 7.7.1.

For the case that the mesh is not aligned with the discontinuity, which probably is 
the interesting case, the elements with discontinuity are the major source of the error, and will dominate the inflow half order loss since we can always make sure the mesh on the inflow  boundary condition is aligned. In this case, the simple choice $\omega=1$ is probably the better choice.

The discussions in Remark \ref{remark_apriori} are also true for the methods in this section.
\end{rem}
\subsection{A posteriori error estimation}
The least-squares functional can be used to define the following fully computable a posteriori local indicator and global error estimator:
$$
\xi_K^2 := \|\bsigma_h-\bbeta u_h\|_{0,K}^2 + \|\gradt \bsigma_h+\gamma u_h -f\|_{0,K}^2 + \sum_{F\in \p K \cap \cE_{-}}\|\bsigma_h\cdot\bn-\bbeta\cdot\bn g\|_{\omega,F}^2, \forall K\in \cT,
$$
and
$$
\xi^2 := \sum_{K\in\cT}\xi_K^2 =  \|\bsigma_h-\bbeta u_h\|_0^2 + \|\gradt \bsigma_h+\gamma u_h -f\|_0^2+\|\bsigma_h\cdot\bn-\bbeta\cdot\bn g\|_{\omega,\Gamma_-}^2.
$$

\begin{thm}
The a posteriori error estimator $\eta$ is exact with respect to $\tri (\cdot,\cdot)\tri_B$-norm:
$$
\xi = \tri (\bsigma-\bsigma_h,u-u_h) \tri_B \quad \mbox{and} \quad \xi_K = \tri (\bsigma-\bsigma_h,u-u_h) \tri_{B,K}.
$$
The following local efficiency bounds are also true with a constant $C>0$ independent of the mesh size $h$. For the method and indicators with $\omega = \omega_1=1$,
$$
C \xi_K \leq \|\bsigma-\bsigma_h\|_{H(\divvr;K)} + \|u-u_h\|_{0,K}, \quad \forall K\in\cT\backslash\cT_-,
$$
$$
C \xi_K \leq h_K^{-1/2}\|\bsigma-\bsigma_h\|_{0,K} + \|\gradt(\bsigma-\bsigma_h)\|_{0,K} + \|u-u_h\|_{0,K}, \quad \forall K\in\cT_-,
$$
and for the method and indicators with $\omega = \omega_2$,
$$
C \xi_K \leq \|\bsigma-\bsigma_h\|_{H(\divvr;K)} + \|u-u_h\|_{0,K}, \quad \forall K\in\cT.
$$
\end{thm}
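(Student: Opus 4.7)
The plan is to split the theorem into two independent assertions: the exactness identity and the local efficiency bounds, and to prove each by direct algebraic manipulation combined with the trace inequality already recorded as \eqref{traceineq}.

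For the exactness part I would use that the continuous solution $(\bsigma,u)$ satisfies $\bsigma=\bbeta u$, $\gradt\bsigma+\gamma u=f$ in $\O$, and $\bsigma\cdot\bn=\bbeta\cdot\bn g$ on $\Gamma_-$. Substituting these into the three pieces of $\xi_K^2$, I would rewrite
\begin{align*}
\bsigma_h-\bbeta u_h &= -(\bsigma-\bsigma_h)+\bbeta(u-u_h),\\
\gradt\bsigma_h+\gamma u_h-f &= -\gradt(\bsigma-\bsigma_h)-\gamma(u-u_h),\\
(\bsigma_h-\bbeta g)\cdot\bn\big|_{\Gamma_-} &= -(\bsigma-\bsigma_h)\cdot\bn\big|_{\Gamma_-},
\end{align*}
so that summing the squared norms of these three quantities (weighted as in $\cL_i$) gives exactly $\tri(\bsigma-\bsigma_h,u-u_h)\tri_{B,K}^2$. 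The element-wise identity, and then the global identity after summing over $K\in\cT$, follows immediately. This step is purely algebraic and carries no analytic difficulty.

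For the efficiency bounds I would apply the triangle inequality to each of the three summands of $\xi_K^2$, using the same rewrites as above. The volume terms give $\|\bsigma-\bsigma_h\|_{0,K}+\|\bbeta\|_\infty\|u-u_h\|_{0,K}$ and $\|\gradt(\bsigma-\bsigma_h)\|_{0,K}+\|\gamma\|_\infty\|u-u_h\|_{0,K}$ at once, which together are controlled by $\|\bsigma-\bsigma_h\|_{H(\divvr;K)}+\|u-u_h\|_{0,K}$ and therefore already give the bound on $K\in\cT\setminus\cT_-$. For a boundary element $K\in\cT_-$ the only new ingredient is the face term. Using the hypothesis $|\bbeta\cdot\bn|\ge c>0$ on $\Gamma_-$ I obtain, for $F\subset\partial K\cap\Gamma_-$,
$$
\|\bsigma_h\cdot\bn-\bbeta\cdot\bn g\|_{\omega,F}^2 \;\le\; C\,\omega_F\,\|(\bsigma-\bsigma_h)\cdot\bn\|_{0,F}^2,
$$
with $\omega_F=1$ in case $\omega=\omega_1$ and $\omega_F=\a_F h_F$ in case $\omega=\omega_2$. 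Applying the trace inequality \eqref{traceineq} gives
$$
\|(\bsigma-\bsigma_h)\cdot\bn\|_{0,F}^2 \;\le\; C\,h_K^{-1}\!\left(\|\bsigma-\bsigma_h\|_{0,K}^2+h_K\|\gradt(\bsigma-\bsigma_h)\|_{0,K}^2\right).
$$

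The main (and only genuinely technical) obstacle is the balance between the $h_K^{-1}$ coming from the trace inequality and the scaling of $\omega_F$. For $\omega=\omega_1=1$ this scaling is not compensated, so the extracted bound for boundary elements necessarily carries an $h_K^{-1/2}$ in front of $\|\bsigma-\bsigma_h\|_{0,K}$; that is precisely the weaker efficiency bound stated for $K\in\cT_-$. For $\omega=\omega_2=\a_F h_F$ the factor $h_F\simeq h_K$ exactly cancels the $h_K^{-1}$, giving uniform efficiency with respect to $\|\bsigma-\bsigma_h\|_{H(\divvr;K)}+\|u-u_h\|_{0,K}$ on every element. Collecting the volume and face estimates yields all three bounds claimed in the theorem.
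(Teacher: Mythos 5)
Your proposal is correct and follows essentially the same route as the paper: exactness by substituting the exact relations $\bsigma=\bbeta u$, $\gradt\bsigma+\gamma u=f$, $\bsigma\cdot\bn=\bbeta\cdot\bn\,g$ into the three pieces of $\xi_K$, and efficiency via the triangle inequality plus the trace inequality \eqref{traceineq} on boundary elements, with the weight $\omega_2=\a_F h_F$ cancelling the $h_K^{-1}$ from the trace bound while $\omega_1=1$ leaves the $h_K^{-1/2}$ factor. You simply spell out the details (including the use of $|\bbeta\cdot\bn|\ge c>0$) that the paper's two-line proof leaves implicit.
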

\begin{proof}
The local and global exactness results are trivial as the case without the boundary functional.
For the local efficiency bounds, the result follows from the triangle inequality and the trace inequality \eqref{traceineq} if the element belongs to $\cT_-$.
\end{proof}

\begin{rem} In our LSFEM-B method with weight $\omega_2$, 
the boundary condition of $\bsigma$ is treated 
by mesh size weighting to ensure the optimal convergence order. The more complicated $-1/2$ norm version similar to that in \cite{Sta:06} can also be developed. 
\end{rem}

\section{Computational Examples}
\setcounter{equation}{0}

\subsection{C-LSFEM for comparison}
In our computational examples, we also compare our new flux based LSFEMs with  existing LSFEM with a continuous finite element approximation (C-LSFEM). 

Define the standard linear continuous finite element space as $S_1 := \{v\in H^1(\O), v|_{K}\in P_1(K), \forall \ K\in \cT \}$, define the abstract spaces as
\begin{eqnarray*}
H^1_{g,-}(\O) &:=& \{v\in H^1(\O), v=g \mbox{  on  } \Gamma_-\}, \\ \quad \mbox{and}\quad
H^1_{0,-}(\O) &:=& \{v\in H^1(\O), v=0 \mbox{  on  } \Gamma_-\}, 
\end{eqnarray*}
and define the corresponding continuous finite element spaces:
$$
V_{g,-} := H^1_{g,-}(\O) \cap S_1 \quad \mbox{and}\quad
V_{0,-} := H^1_{0,-}(\O) \cap S_1.
$$
Here, we assume that $g$ is smooth enough to make the above definitions meaningful and it can be exactly approximated by the linear finite elements on the inflow boundary. For our many numerical examples, $g$ might be discontinuous and we use a smoothed version of it in the computation, specifically, we choose $u_h\in V_{g,-}$, such that $u_h(z) = g(z)$ if $g$ is continuous at a node $z$, and  $u_h(z) = (g(z^-)+g(z^+))/2$ for $g$ with a jump discontinuity at a node $z$. 

For the C-LSFEM, we use the equivalent non-conservative formulation to define the least-squares minimization problem:
\beq
\cL(v;f,g) := \|\bbeta \cdot \nabla v+\mu v- f\|_0^2, \quad v \in H^1_{g,-}(\O).
\eeq
The corresponding finite element problem is \cite{BochevChoi:01,DMMO:04,BG:09,BG:16}: find $u_h\in V_{g,-}$, such that,
$$
(\bbeta \cdot \nabla u_h + \mu u_h, \bbeta \cdot \nabla v_h + \mu v_h) = (f, \bbeta \cdot \nabla v_h+ \mu v_h), \; \forall \, v_h\in V_{0,-}.
$$
We can use the LS functional as the a posteriori error estimator and error indicator:
$$
\zeta = \|\bbeta \cdot \nabla u_h +\mu u_h- f\|_0, \quad \zeta_K =  \|\bbeta \cdot \nabla u_h +\mu u_h- f\|_{0,K}.
$$
Note that, when the true solution is discontinuous, the error estimator will never be zero. And, it will have non-trivial oscillations near discontinuity, as pointed out in \cite{Zhang:19}.

\subsection{Computational setting}
In most of our numerical examples, the lowest order approximations are used, i.e., $P_0$ for $u$ and $RT_0$ for the flux $\bsigma$. We will explicitly state out if $RT_1 \times P_1$ pair is used.

We use the name LSFEM to denote the methods we developed in Section 4,
and use LSFEM-B1 and LSFEM-B2 to denote the methods with weight $\omega_1$ and $\omega_2$ developed in Section 6, separately. 
If not stated explicitly,  $\a_F=10$ is used in LSFEM-B2 in our numerical tests.

In the adaptive mesh refinement algorithm,  the D\"ofler's bulk marking strategy with $\theta =0.5$ is used and  the algorithm is stopped when the total number of  nodes reaches $10^5$. All refinements are based on the longest edge bisection algorithm.

\begin{figure}[!htb]
    \centering
   \begin{minipage}[!hbp]{0.5\linewidth}
        \includegraphics[width=0.99\textwidth,angle=0]{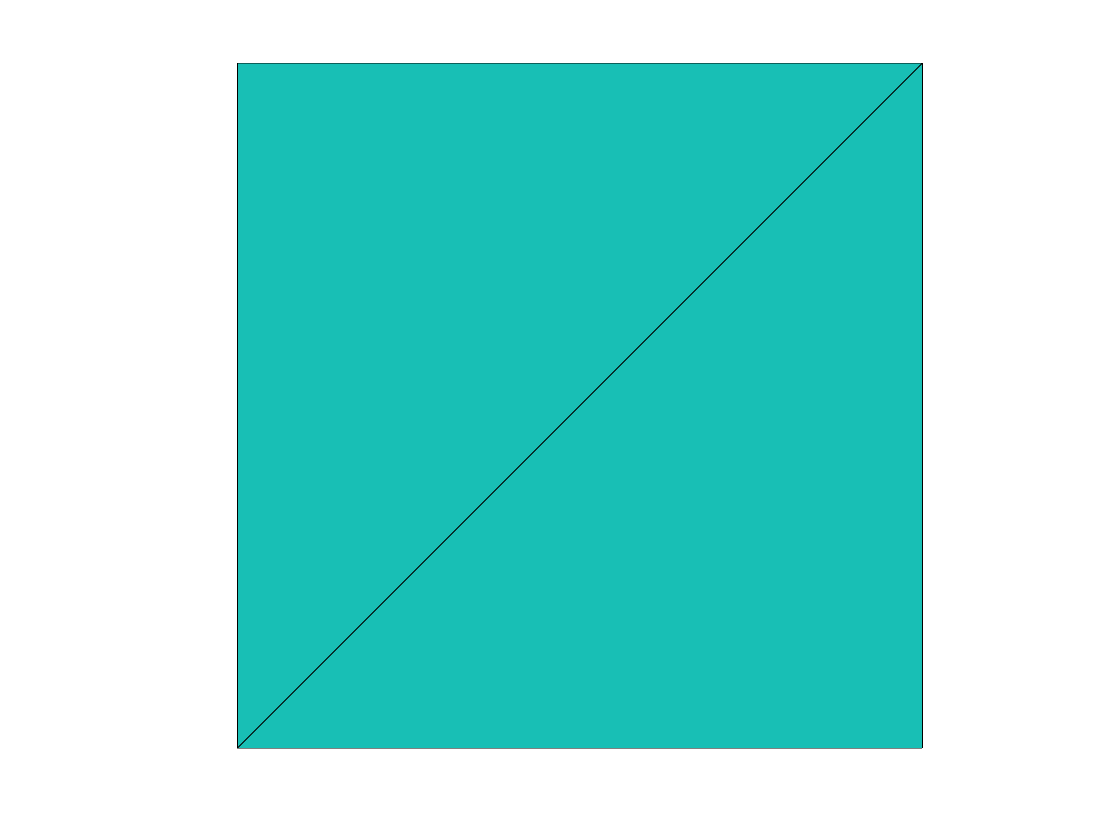}
        \end{minipage}%
        \caption{Initial mesh for all examples with a $(0,1)^2$ domain}%
            \label{initialmesh}
\end{figure}

For all the numerical examples with domain $(0,1)^2$, except for the Peterson problem, the mesh shown in Fig. \ref{initialmesh} is used as an initial mesh.

Although we have three versions of fluxed-based least-squares methods, in our numerical experiments, we find they have almost identical performance. We only show the figures of all three methods in Examples 7.4 and 7.6. For all other test problems, only LSFEM are shown unless stated explicitly.

\subsection{An example with a constant advection field and a piecewise constant solution on a matching grid}

Consider the following problem, $\O = (0,1)^2$ with $\bbeta = (1,1)^T/\sqrt{2}$. The inflow boundary is $\{x=0, y\in (0,1)\} \cup \{x\in (0,1), y=0\}$, i.e., the west and south boundaries 
of the domain. 
Let $\gamma =1$ and choose $g$ and $f$ such that the exact solution $u$ is
$$
u = \left\{ \begin{array}{lll}
1 & \mbox{in} & y>x, \\[2mm]
0 & \mbox{in} & y<x.
\end{array} \right.
$$
If we choose the mesh aligned with the discontinuity, for example, any refinements of the mesh in Fig. \ref{initialmesh}. Note that the true solutions $u\in P_0$ and $\bsigma \in RT_0$. By the best approximation properties Theorems \ref{thm_bestapp} and \ref{thm_bestapp_B},  the numerical solutions of the flux-based LSFEMs $u_h$ and $\bsigma_h$ are identical to the exact solutions.  So no further refinements are needed.  

This is not true for the C-LSFEM, where $C^0$ finite elements are used to approximate the discontinuous $u$. Many unnecessary refinements are needed, and a mesh aligned with the discontinuity is useless here. Severe overshooting is observed. In Fig. \ref{afem_pwc_matching}, we show a final adaptive mesh, a numerical solution, and convergence histories for the C-LSFEM with $\zeta$ as the a posteriori error estimator. Compared with the natural discontinuous approximations such as the flux-based LSFEMs, the C-LSFEM is a bad choice for such cases.
%

\begin{figure}[!htb]
\centering 
\subfigure[adaptive mesh]{ 
\includegraphics[width=0.3\linewidth]{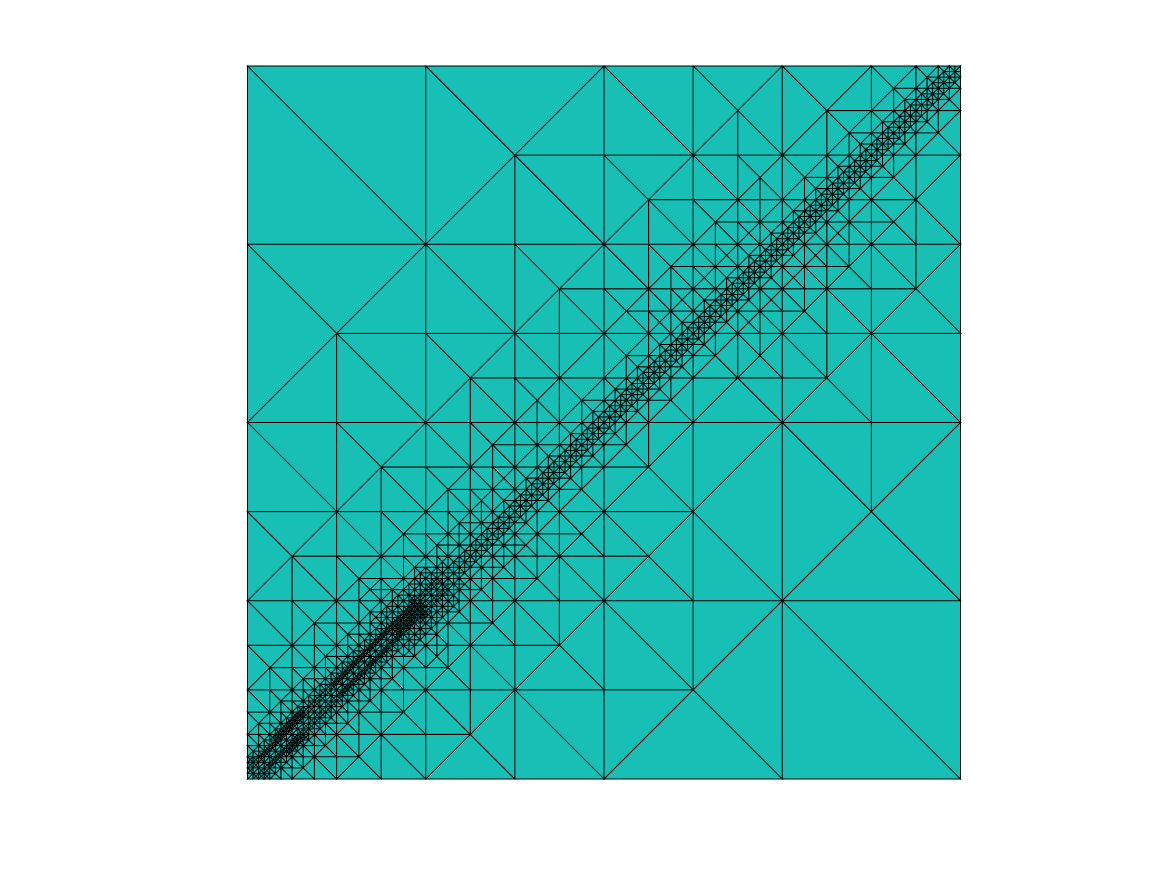}}
~
\subfigure[numerical solution]{
\includegraphics[width=0.3\linewidth]{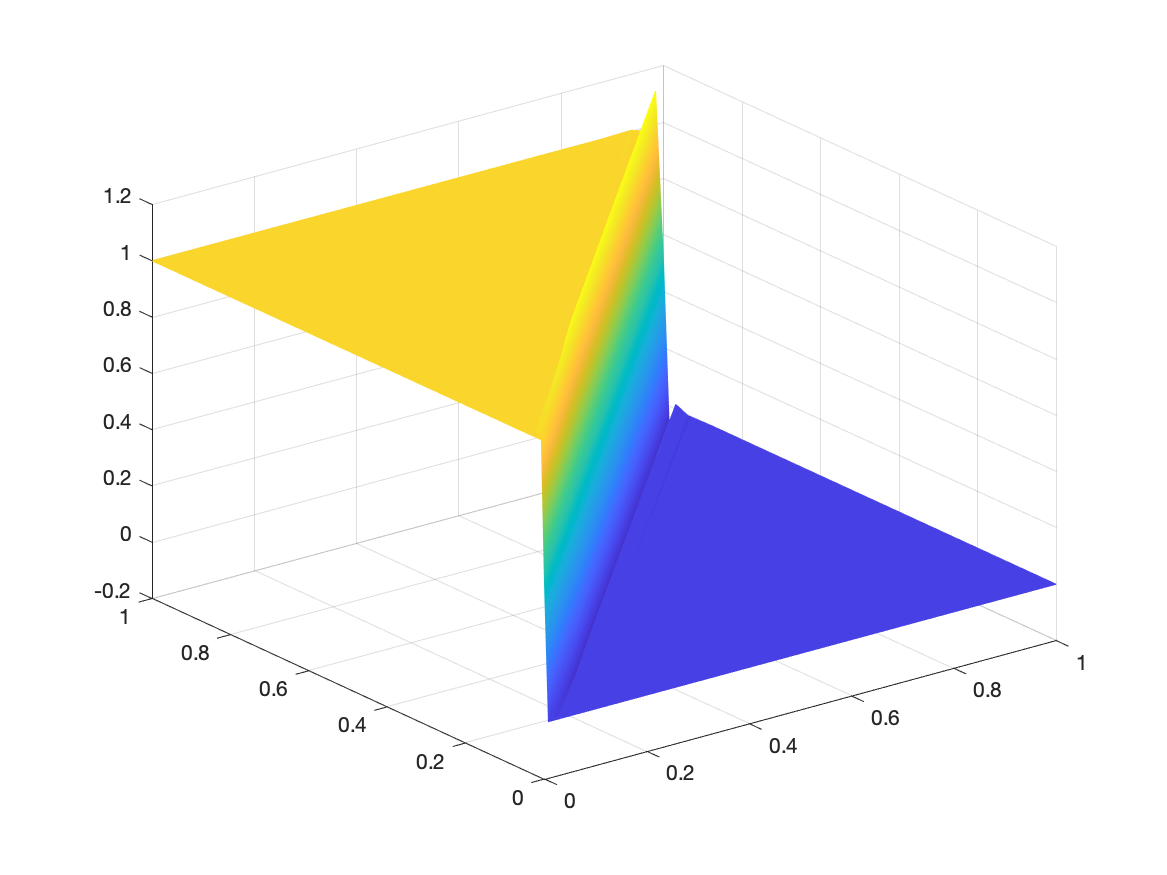}}
~
\subfigure[convergence histories]{
\includegraphics[width=0.3\linewidth]{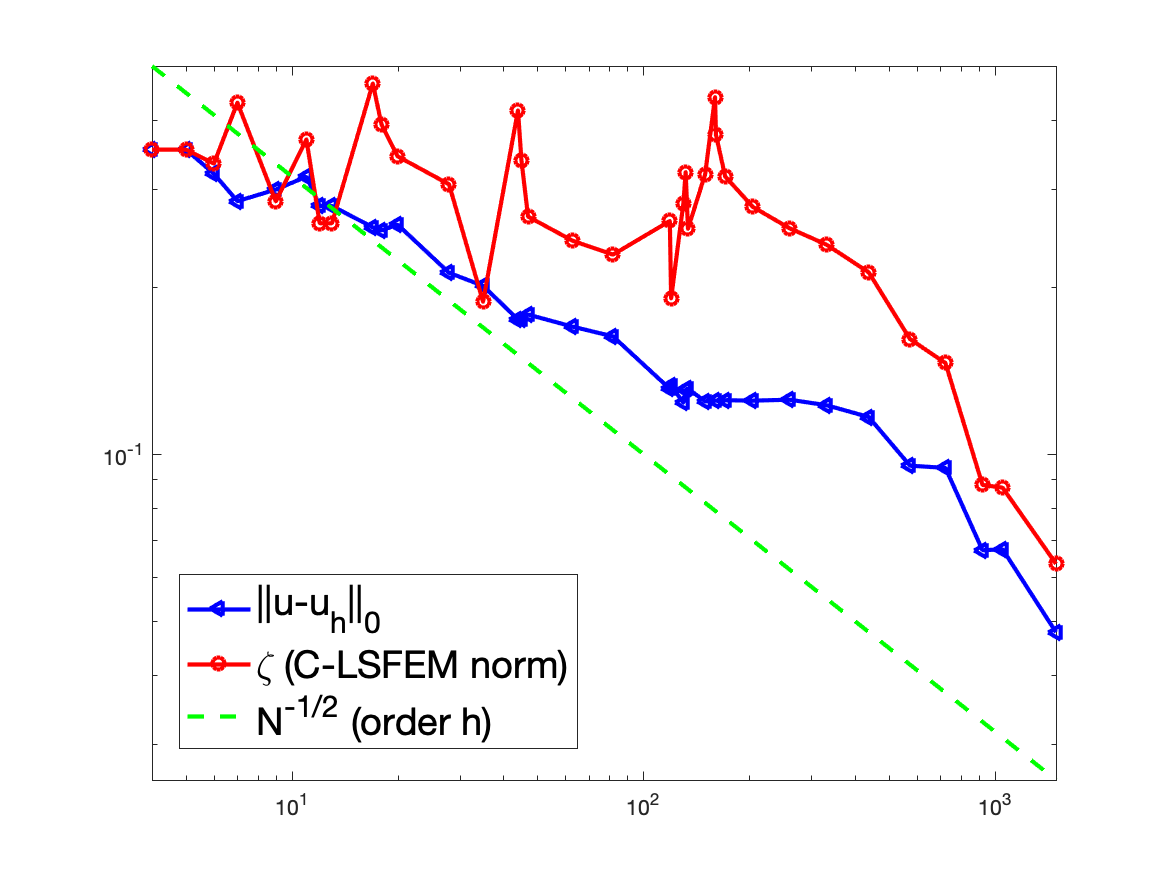}}
\caption{Piecewise constant solution: a matching mesh with the adaptive C-LSFEM}
 \label{afem_pwc_matching}
\end{figure}

\subsection{An example with a global smooth solution}
Consider the following simple problem: $\O = (0,1)^2$ with $\bbeta = (1,1)^T$. The inflow boundary is $\{x=0, y\in (0,1)\} \cup \{x\in (0,1), y=0\}$, i.e., the west and south boundaries of the domain. Let $\gamma=1$. Choose $f$ and $g$ such that the exact solution is $u =\sin(x+y)$.

In Fig. \ref{error_smooth}, the convergence histories of flux-based LSFEMs on uniformly refined meshes are shown. Errors measured in least-squares norms and $\|u-u_h\|_0$ are all of order $1$. The optimal convergence order in $\tri(\cdot,\cdot) \tri_{B,1}$ norm suggests that the half order loss on those inflow boundary elements is neglectable. 


\begin{figure}[!ht]
\centering 
\subfigure[LSFEM]{ 
\includegraphics[width=0.3\linewidth]{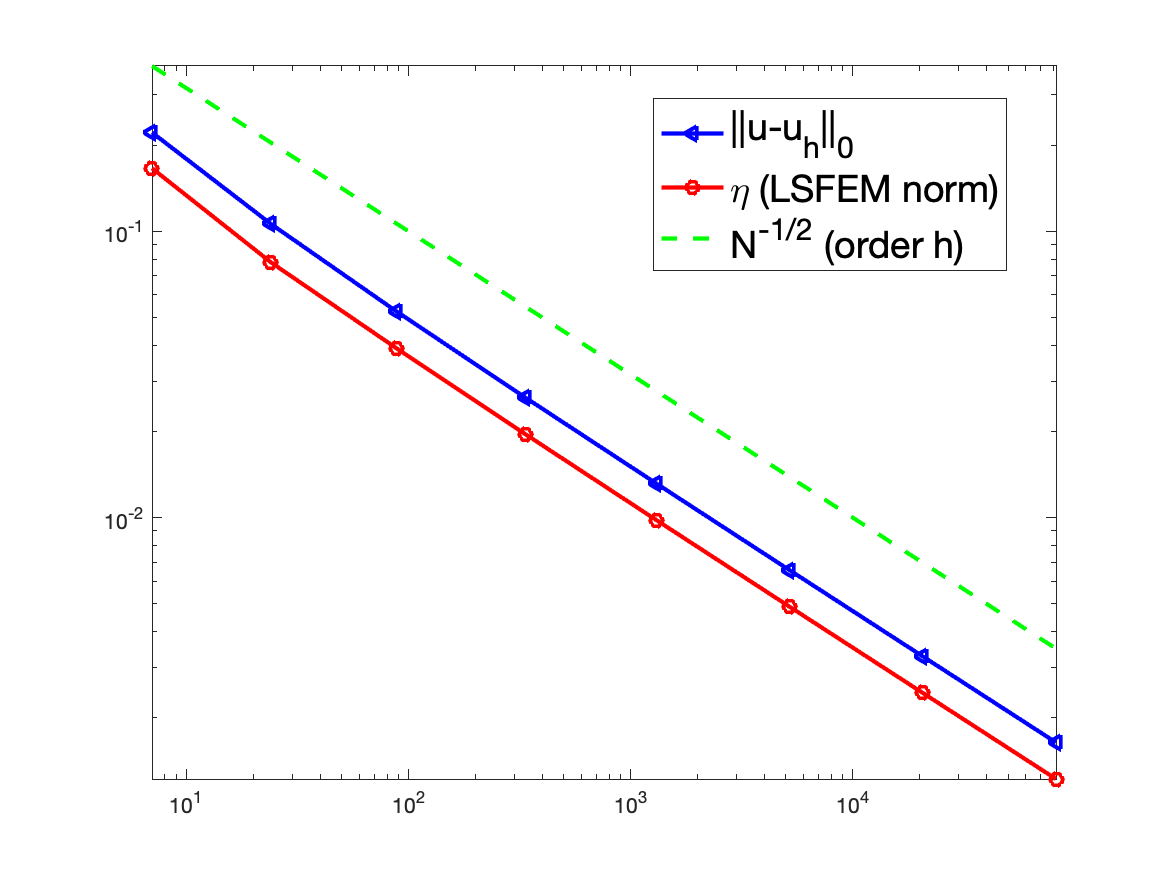}}
~
\subfigure[LSFEM-B1]{
\includegraphics[width=0.3\linewidth]{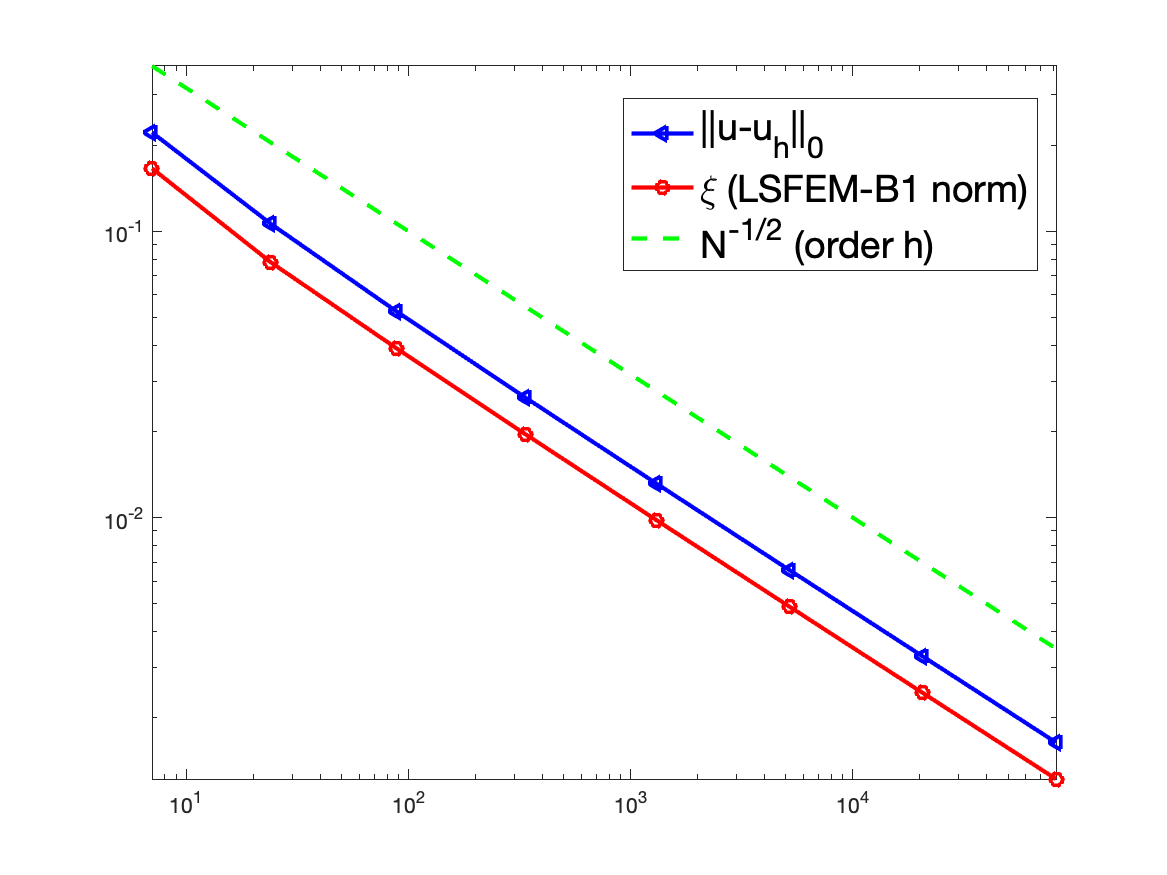}}
~
\subfigure[LSFEM-B2]{
\includegraphics[width=0.3\linewidth]{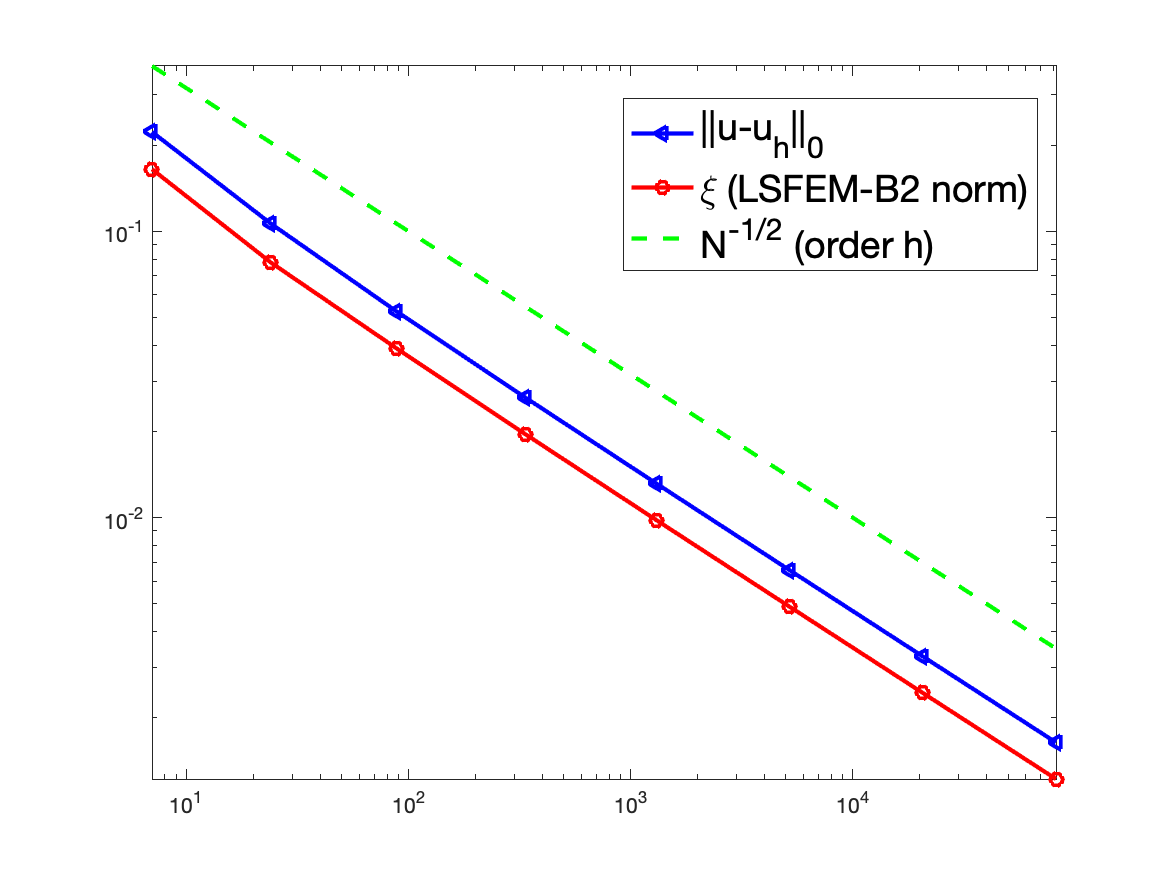}}
\caption{Global smooth solution: convergence histories on uniformly refined meshes of flux-based LSFEMs}
 \label{error_smooth}
\end{figure}


%

\subsection{Peterson example}
In Peterson \cite{Peterson:91}, a famous example is suggested to show that even for a smooth solution, discontinuous Galerkin methods cannot have optimal convergence order. The $L^2$-norm error estimate $\|u-u_{dg}\|_0 \leq C h^{k+1/2}\|u\|_{k+1}$ of the discontinuous Galerkin method using $P_k$ as approximation space cannot be improved. 

We have a similar situation in our case. Consider the test problem from section 3 of Peterson \cite{Peterson:91}: Let $\O = (0,1)^2$ and $\bbeta = (0,1)^T$. The inflow boundary $\Gamma_-$ is $\{x\in (0,1), y=0\}$, i.e., the south boundary of the domain.
\begin{eqnarray}
u_y = \gradt(\bbeta u)&=& 0 \quad \mbox{ in } \O,\\
u|_{\Gamma_-} &=& x \quad \mbox{ on } \Gamma_-.
\end{eqnarray} 
The exact solution is $u=x$. The mesh is chosen to be in the pattern on the left of Fig. \ref{error_peterson}. We compute a series of solutions by the LSFEM on meshes with $h$ from $1/6$, $1/12$, $\cdots$, to $1/768$. The convergence result is plotted on the right of Fig. \ref{error_peterson}. It is observed that the error in LS norm still converges in the order of $1$, but the $L^2$-norm of $u-u_h$ only converges in the order of $3/4$.

The rate difference of Examples 7.4 and 7.5 suggests that even for globally smooth solutions, the following norm equivalence (even in discrete spaces) does {\it not} hold:
$$
\tri (\btau, v) \tri \approx \|\btau\|_{H(\divvr;\O)} + \|v\|_0  \quad \forall (\btau,v) \in H_{0,-}(\divvr;\O) \times L^2(\O).
$$
Otherwise, we will have a uniform convergence order of $1$ for $\|u-u_h\|_0$.

\begin{figure}[!htb]
\centering 
\subfigure[Peterson mesh with $h=1/6$]{ 
\includegraphics[width=0.45\linewidth]{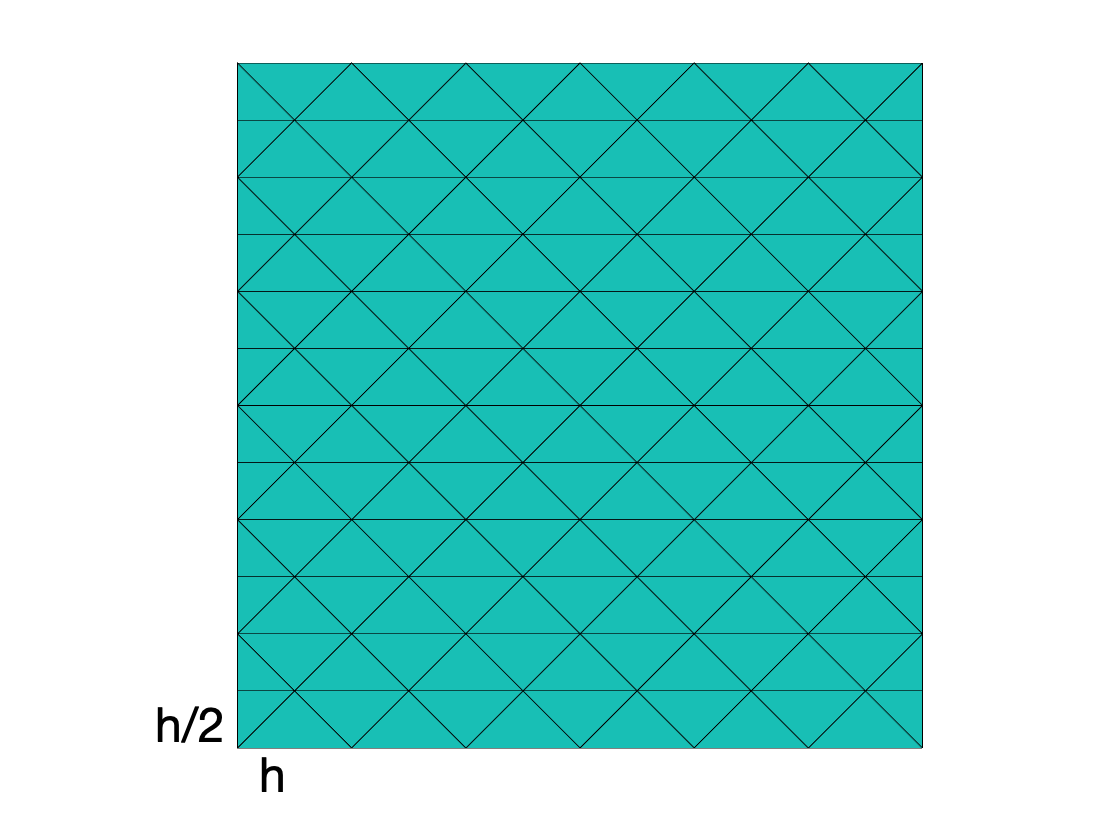}}
~
\subfigure[LSFEM convergence]{
\includegraphics[width=0.45\linewidth]{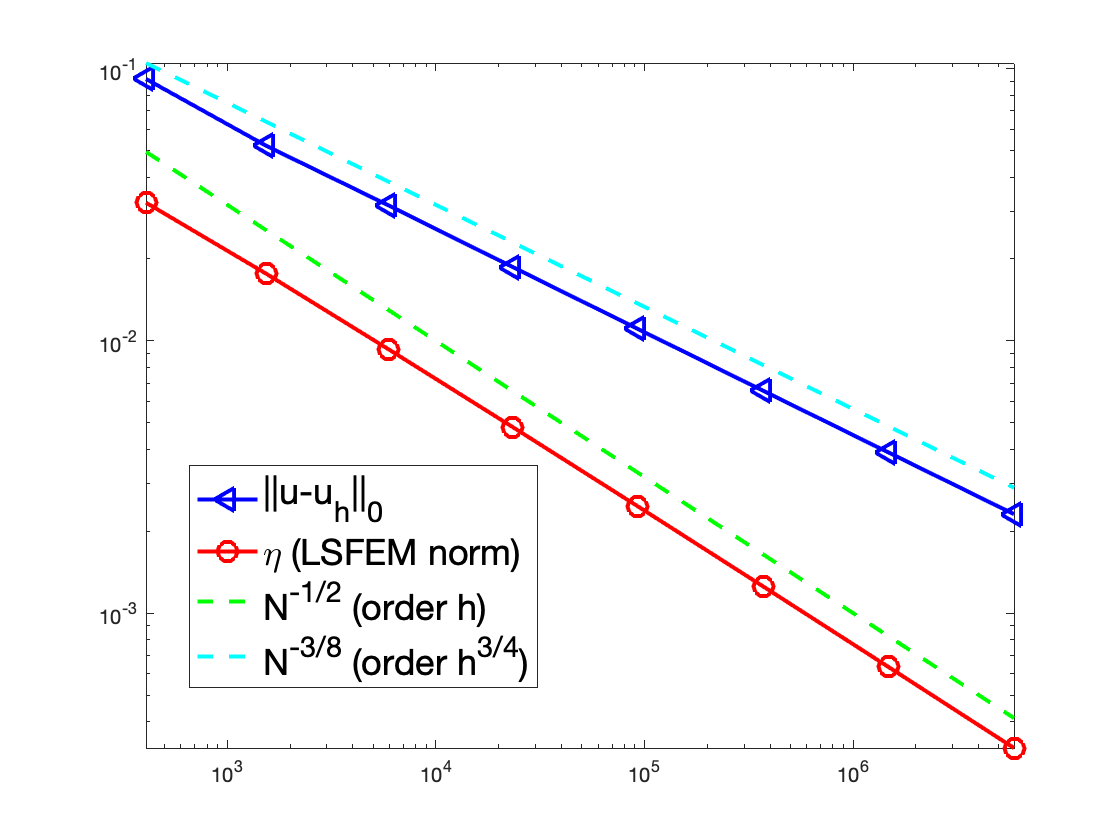}}
\caption{Peterson example}
 \label{error_peterson}
\end{figure}

\subsection{An example with a piecewise smooth solution, matching grid}
Consider the following problem: $\O = (0,1)^2$ with $\bbeta = (1/\sqrt{2},1/\sqrt{2})^T$. The inflow boundary is $\{x=0, y\in (0,1)\} \cup \{x\in (0,1), y=0\}$, i.e., the west and south boundary of the domain. 
Let $\gamma =1$. Choose $g$ and $f$ such that
the exact solution $u$ is
$$
u = \left\{ \begin{array}{lll}
\sin(x+y) &\mbox{if} &y>x, \\[2mm]
\cos(x+y) &\mbox{if} &y<x.
\end{array} \right.
$$

We choose an initial mesh that matches the discontinuity (Fig. \ref{initialmesh}) and uniformly refine it for $8$ times. In Fig. \ref{error_pws}, we show the convergence histories. For all three formulations, the convergence order of the errors in their corresponding least-squares norms is $1$. The order of $\|u-u_h\|_0$ is less than $1$ (about $0.6$ at late stages). This again suggests that the norm equivalence (or in discrete sub-spaces):
$$
\tri (\btau, v) \tri \approx \|\btau\|_{H(\divvr;\O)} + \|v\|_0  \quad \forall (\btau,v) \in H_{0,-}(\divvr;\O) \times L^2(\O),
$$
does not be true for the discontinuous solutions. 

\begin{figure}[!htb]
\centering 
\subfigure[LSFEM]{ 
\includegraphics[width=0.3\linewidth]{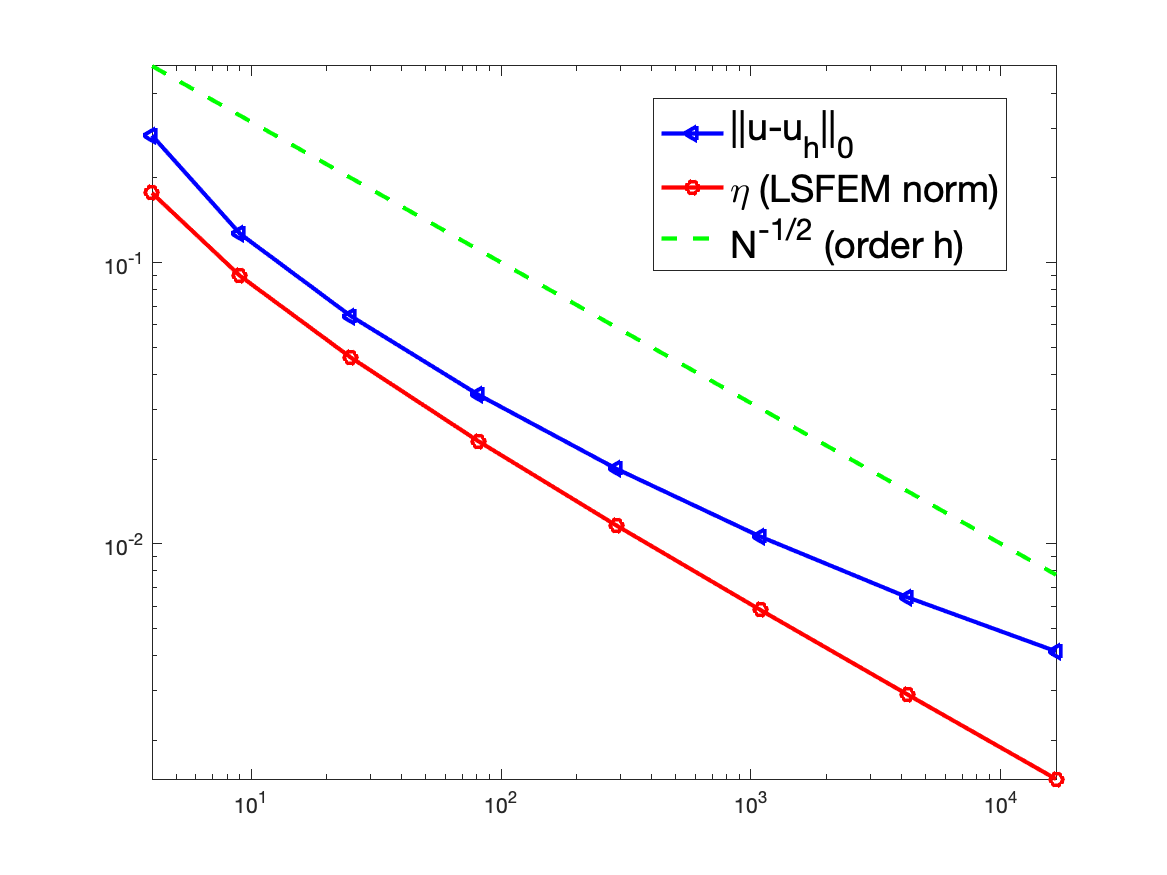}}
~
\subfigure[LSFEM-B1]{
\includegraphics[width=0.3\linewidth]{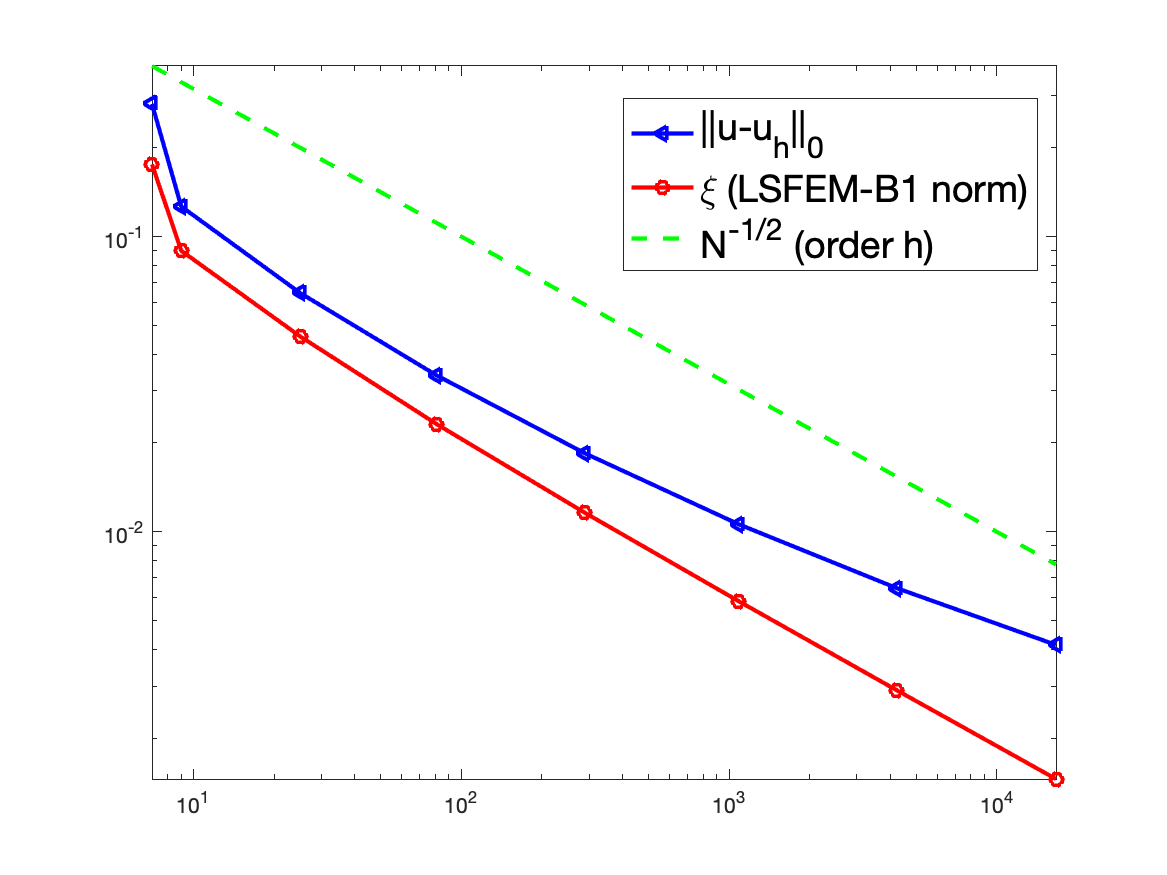}}
~
\subfigure[LSFEM-B2]{
\includegraphics[width=0.3\linewidth]{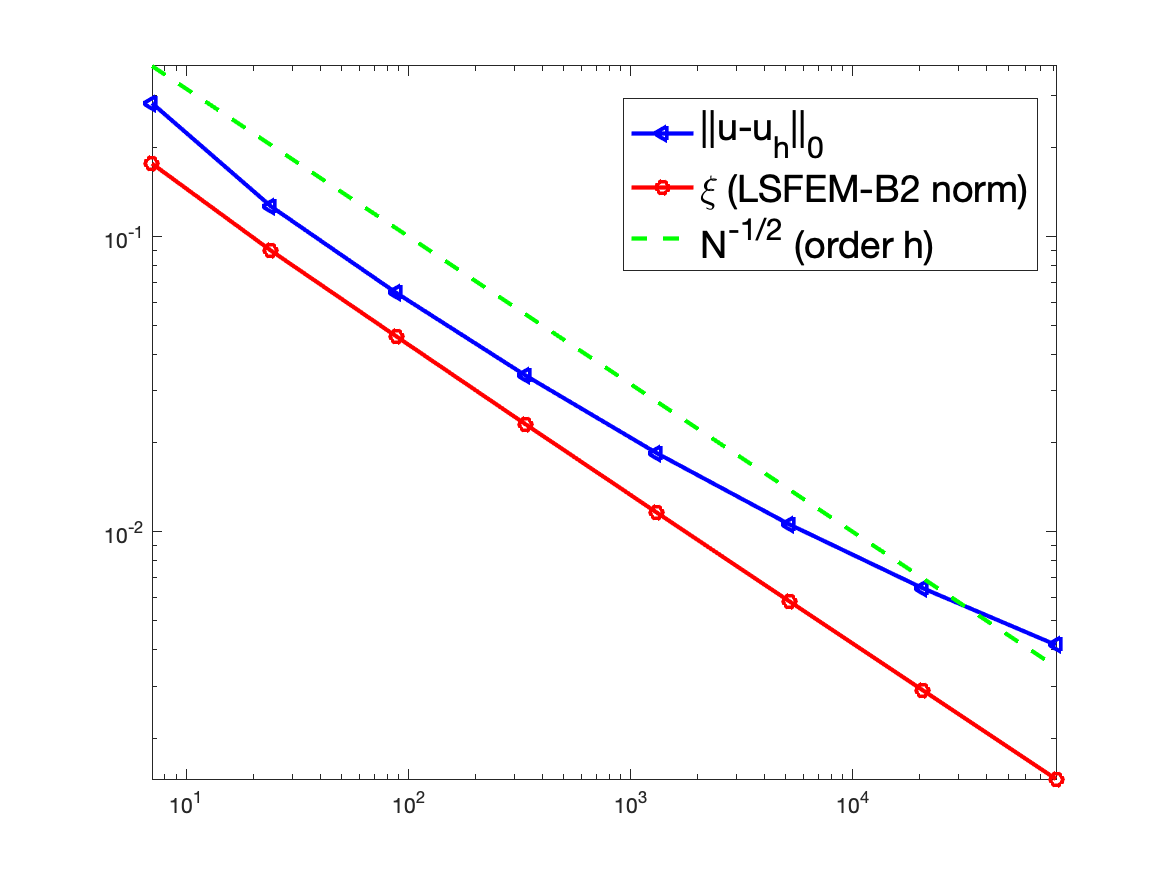}}
\caption{Piecewise smooth solution on a matching mesh: convergence histories of flux-based LSFEMs}
 \label{error_pws}
\end{figure}

%
%

\subsection{An example with a piecewise constant solution, non-matching grid}
In this example, we discuss the over/undershootings of the solution when the mesh is not matched with discontinuity.

Consider the problem: $\O = (0,2)\times (0,1)$ with $\bbeta = (0,1)^T$. 
The inflow boundary is $\{x\in (0,1), y=0\}$, i.e., the south boundary of the domain. Let $\gamma =0$ and $f=0$. Choose the inflow boundary condition such that
the exact solution is
$$
u(x,y) = \left\{ \begin{array}{lll}
0 & \mbox{if} & x< \pi/3, \\[2mm]
1 & \mbox{if} & x > \pi/3.
\end{array} \right.
$$
We set the initial mesh to be as shown on the left of Fig. \ref{inimesh_pi}. 
The bottom central node is $(\pi/3,0)$ and the top central node is $(1,1)$. 
So the inflow boundary mesh is matched with the inflow boundary condition 
while the mesh is not aligned with the discontinuity in general 
and will never match with it if bisection mesh refinement is used.

On the right of Fig. \ref{inimesh_pi}, 
we show the solution computed by LSFEM on a mesh after 8 uniform refinements of the initial mesh. Since it essentially is  a 1D problem, we project the graph of the solution onto the plane $y=0$,  that is, we plot the numerical solution value at the midpoint of x-axis of each elements. We do see some under/overshooting. 
The maximum of $u_h$ is $1.0629$ and the minimum of $u_h$ is $-0.0339$. 

\begin{figure}[!htb]
\centering 
\subfigure[initial mesh]{ 
\includegraphics[width=0.45\linewidth]{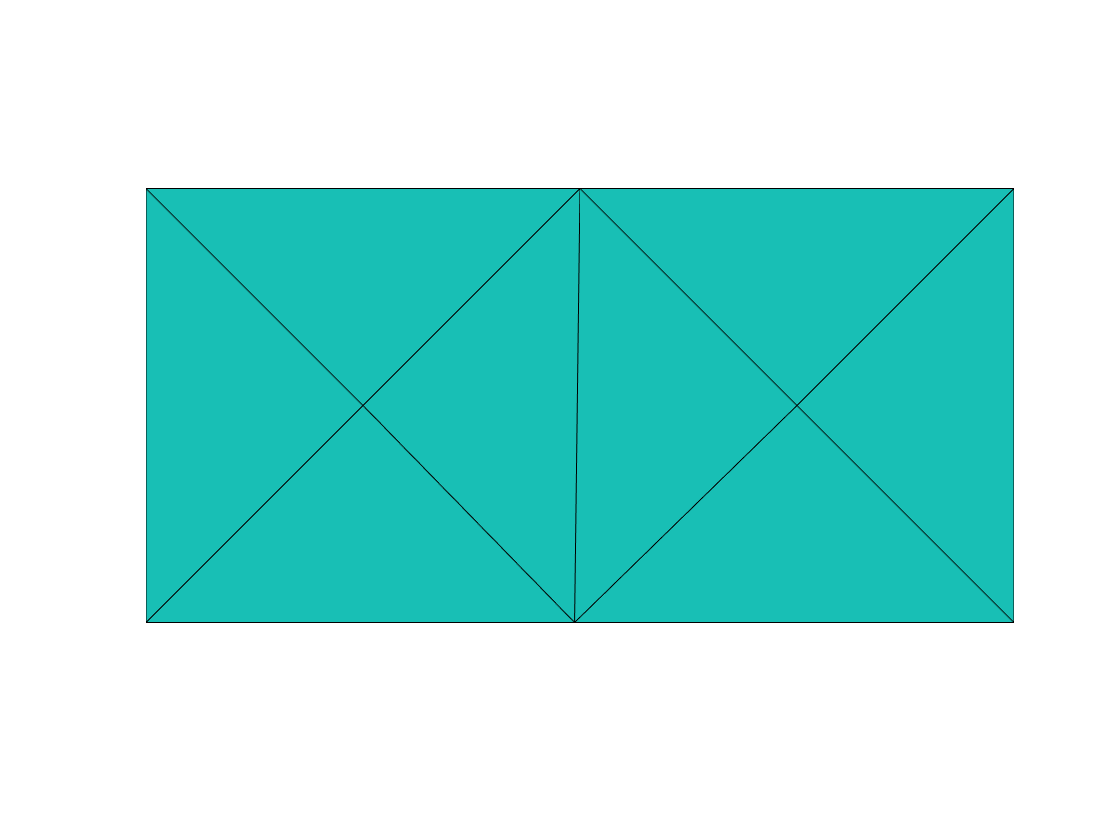}}
~
\subfigure[projected LSFEM solution on the uniform mesh]{
\includegraphics[width=0.45\linewidth]{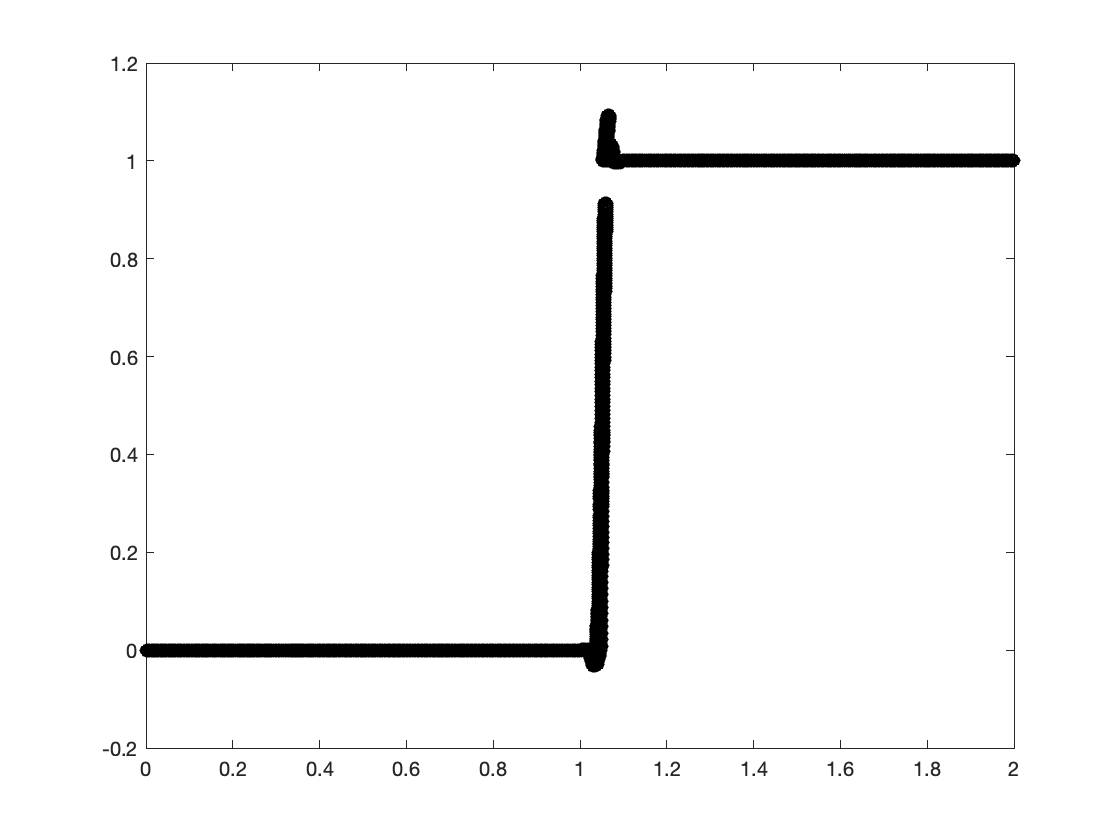}}
\caption{Piecewise constant solution with a non-matching grid test problem}
 \label{inimesh_pi}
\end{figure}

On the left of Fig. \ref{error_10_uniform}, we plot the convergence results of uniform refinements.
The decay rate of the error measured in the least-squares norm is about $0.7$. 
The reason that the rate is less than $1$
is that the discontinuity is cutting though those interface elements 
so that $u$, $\bsigma$, and $\gradt \bsigma$ are not of
$H^1$ in those elements. But the rate is apparently better than $1/2-\epsilon$, 
even though all those true solutions are only in $H^{1/2-\epsilon}(K)$ for those interface elements. 
The possible reason for the better rate can be that 
the Sobolev space $H^{1/2-\epsilon}$ may not be the best space to
characterize the piecewisely discontinuous function space. The order of $\|u-u_h\|_0$ is about $1/2$.

We then test the problem by adaptive mesh refinements. 
On the center of Fig. \ref{error_10_uniform}, adaptive refined meshes after some iterations are shown. Clearly, the refinements are along the discontinuity. 
On the right of Fig. \ref{error_10_uniform}, we show the convergence histories. 
The error measured in the LS norm is optimal with order $1$, while the order of $\|u-u_h\|_0$ is about $1/2$, 
which is about the same order as the uniform refinement.

%

\begin{figure}[!htb]
\centering 
\subfigure[convergence history on uniformly refined meshes]{ 
\includegraphics[width=0.3\linewidth]{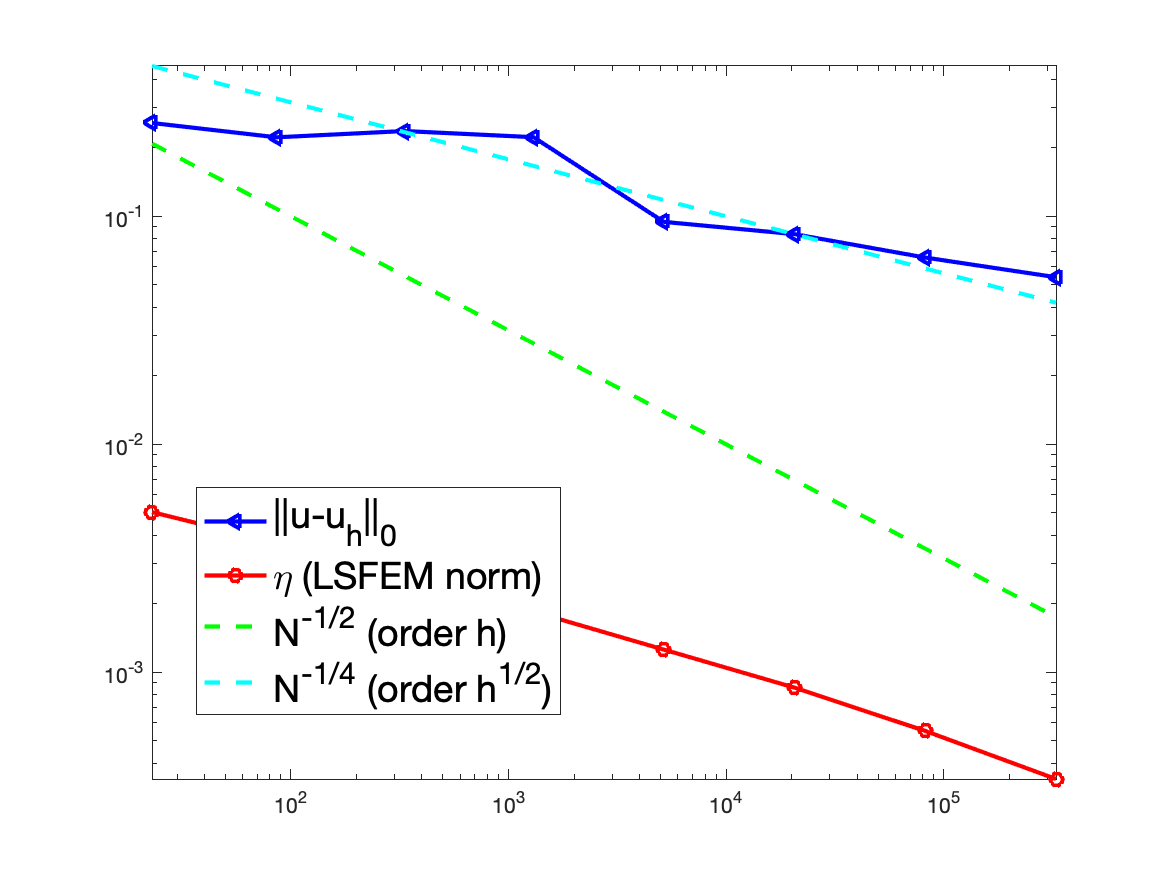}}
~
\subfigure[an adaptive refined mesh]{
\includegraphics[width=0.3\linewidth]{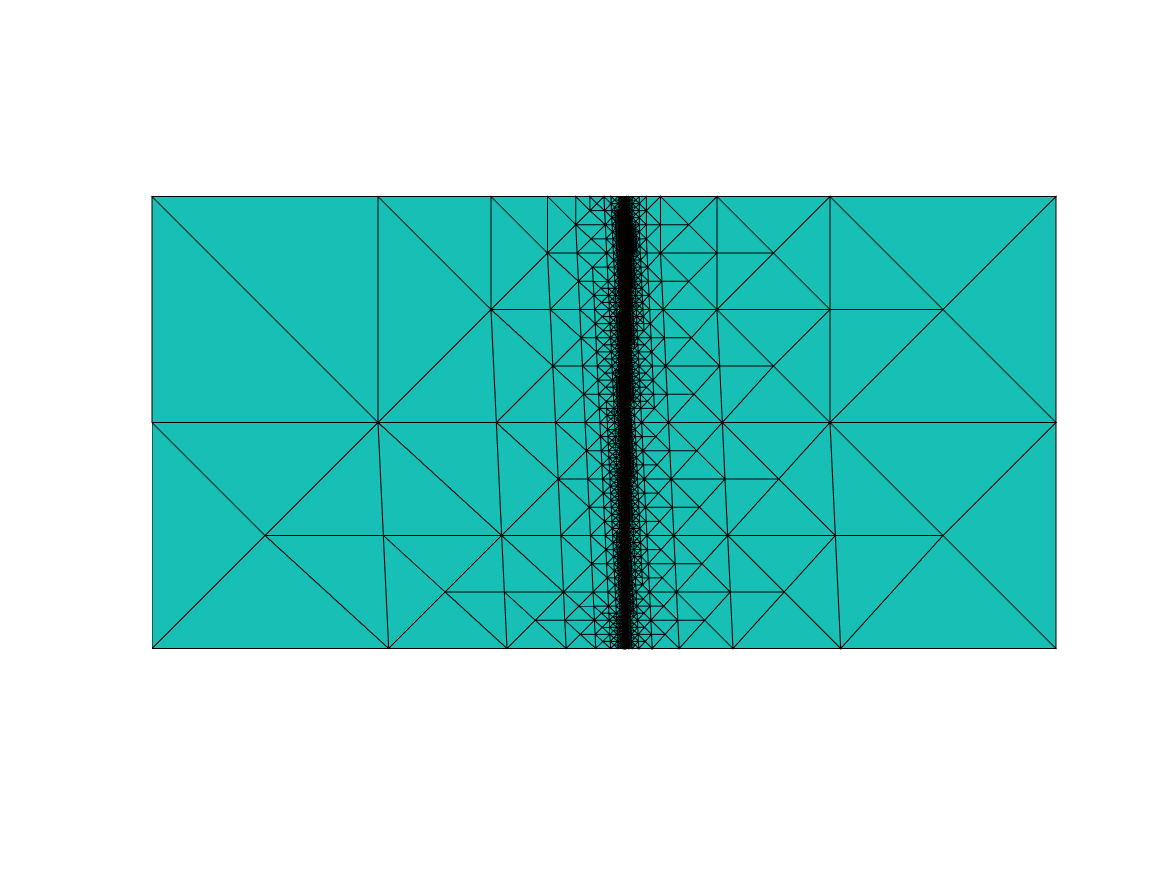}}
~
\subfigure[convergence history on adaptive refined meshes]{
\includegraphics[width=0.3\linewidth]{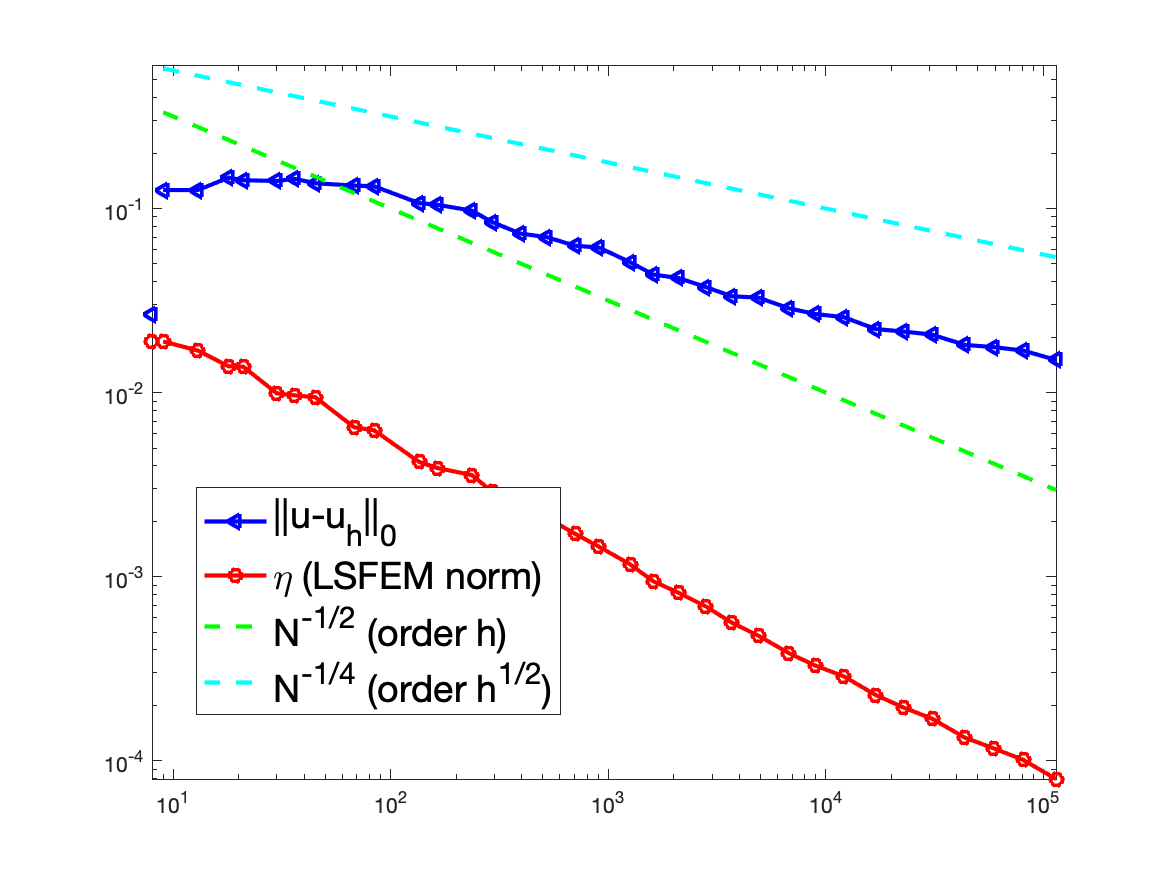}}
\caption{Piecewise constant solution with a non-matching grid test problem}
 \label{error_10_uniform}
\end{figure}

On the left of Fig. \ref{overshhot_amr_LS}, we show the decreasing of the overshooting values by adaptive mesh refinements. Here, the overshooting value is defined as $\max(\max(u_h-1), -\min(u_h))$. We clearly see after the mesh is reasonably fine (when the mesh is coarse, the overshooting is actually not very severe since we approximate $u$ by $P_0$), the overshooting value begins to decrease. On the right of Fig. \ref{overshhot_amr_LS}, we show a projected solution on the final mesh. It is clear that when the mesh is fine, the overshooting is almost neglectable with $RT_0\times P_0$ approximation. 

We also test the same LSFEM with $RT_1\times P_1$ approximations, the result can be found in Fig. \ref{pwc_nonmatching_LSFEM_RT1P1}. It is clear that if we use $P_1$ functions to approximate the discontinuous solutions on a non-matching adaptive mesh, the refinements cannot reduce the overshooting. This matches the discussions we have in \cite{Zhang:19}, that on a non-matching mesh, only piecewise constant approximation can reduce the overshooting, other higher order continuous or discontinuous finite elements cannot.

On Fig. \ref{afem_sol_lsfem_dg}, we show the numerical solutions computed by the RT0P0-LSFEM and the RT1P1-LSFEM on the same final adaptive mesh. The overshooting is obvious for the RT1P1-LSFEM. 

For this problem, C-LSFEM will get a disastrous result.

\begin{figure}[!htb]
\centering 
\subfigure[reduction of overshootings]{ 
\includegraphics[width=0.45\linewidth]{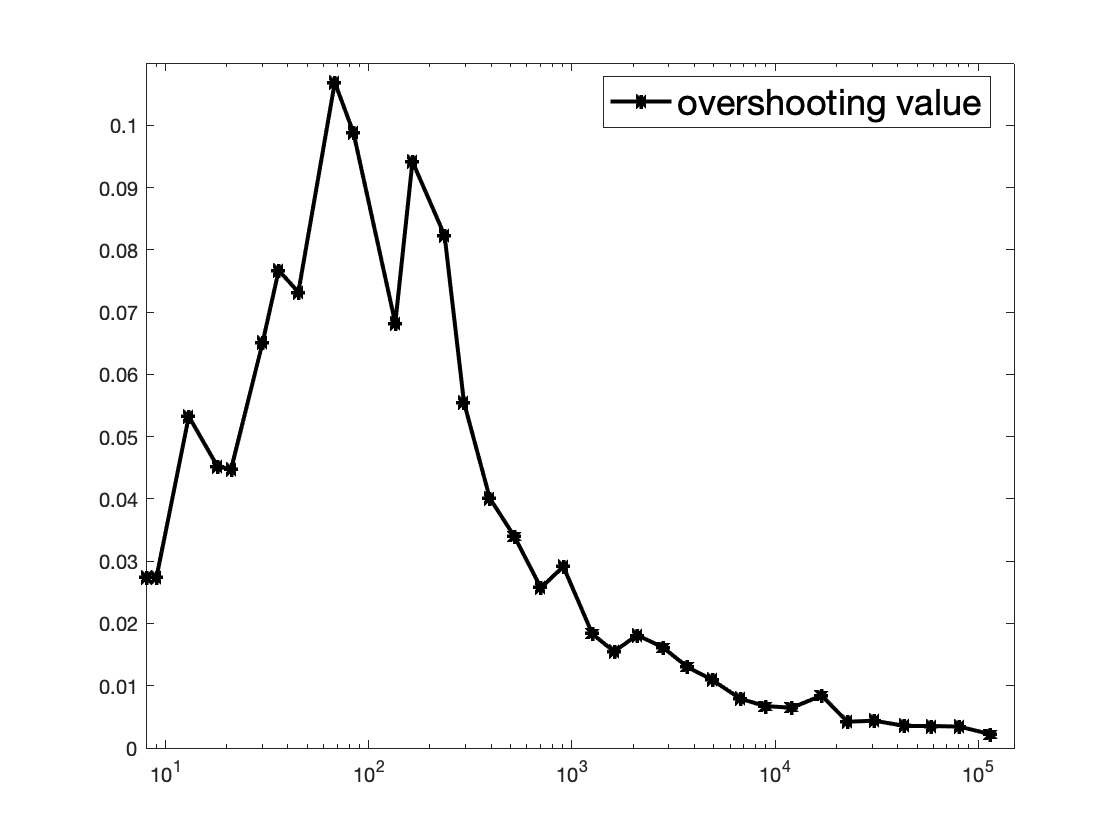}}
~
\subfigure[projected solution]{
\includegraphics[width=0.45\linewidth]{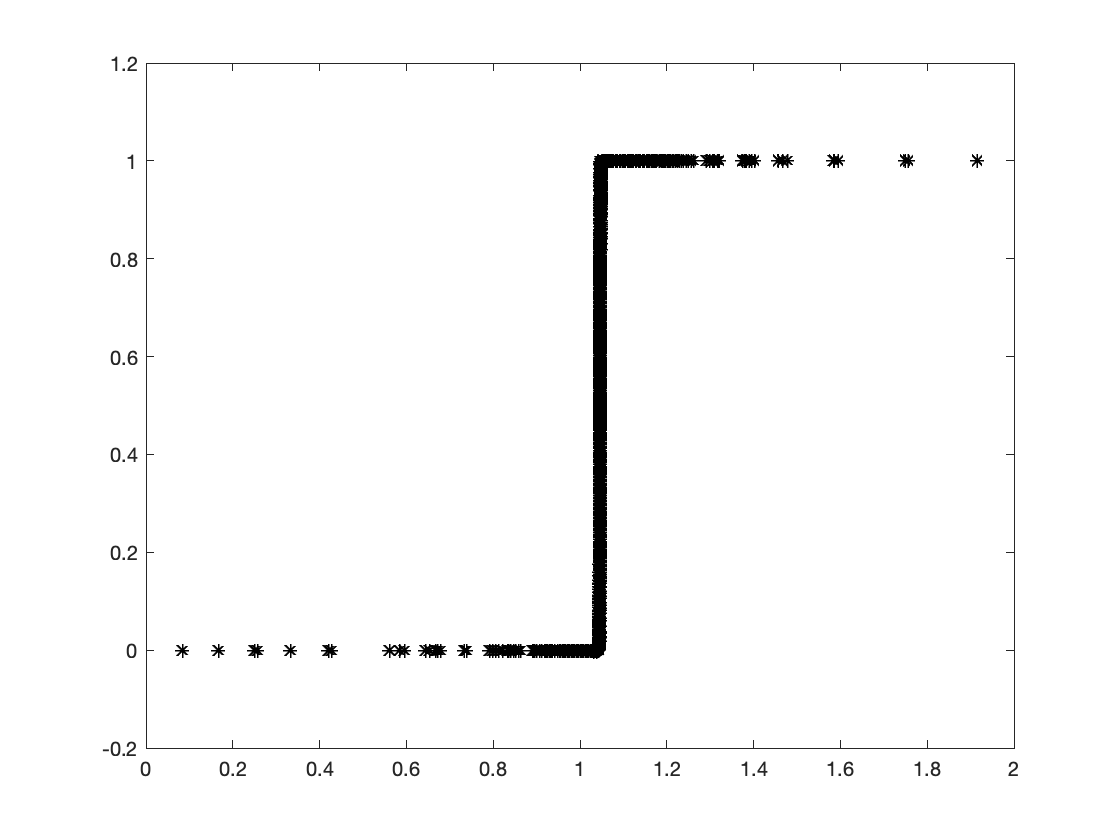}}
\caption{Piecewise constant solution with a non-matching grid test problem by RT0P0-LSFEM on adaptive meshes}
 \label{overshhot_amr_LS}
\end{figure}

\begin{figure}[!ht]
\centering 
\subfigure[overshooting]{ 
\includegraphics[width=0.45\linewidth]{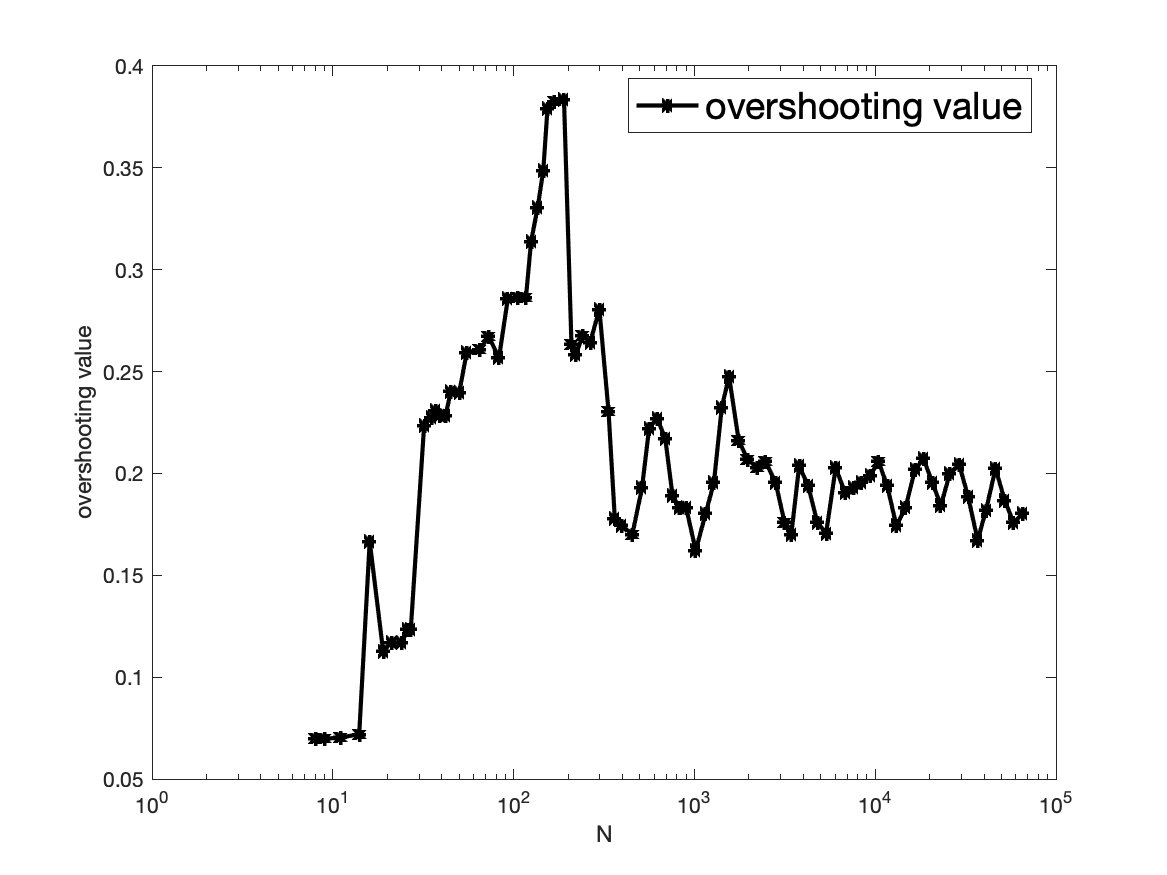}}
~
\subfigure[projected solution]{
\includegraphics[width=0.45\linewidth]{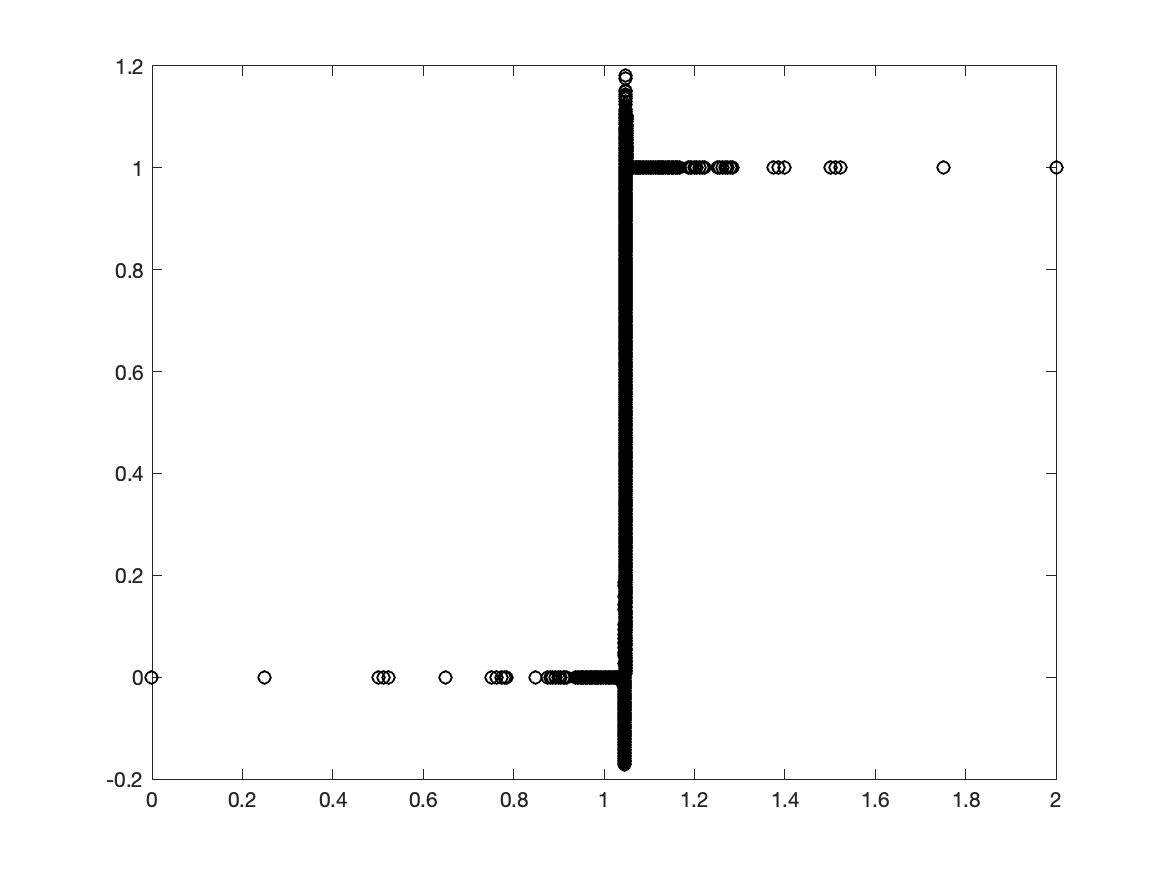}}
\caption{Piecewise constant solution with a non-matching grid test problem by RT1P1-LSFEM on adaptive meshes}
 \label{pwc_nonmatching_LSFEM_RT1P1}
\end{figure}

\begin{figure}[!htb]
\centering 
\subfigure[RT0P0-LSFEM]{ 
\includegraphics[width=0.45\linewidth]{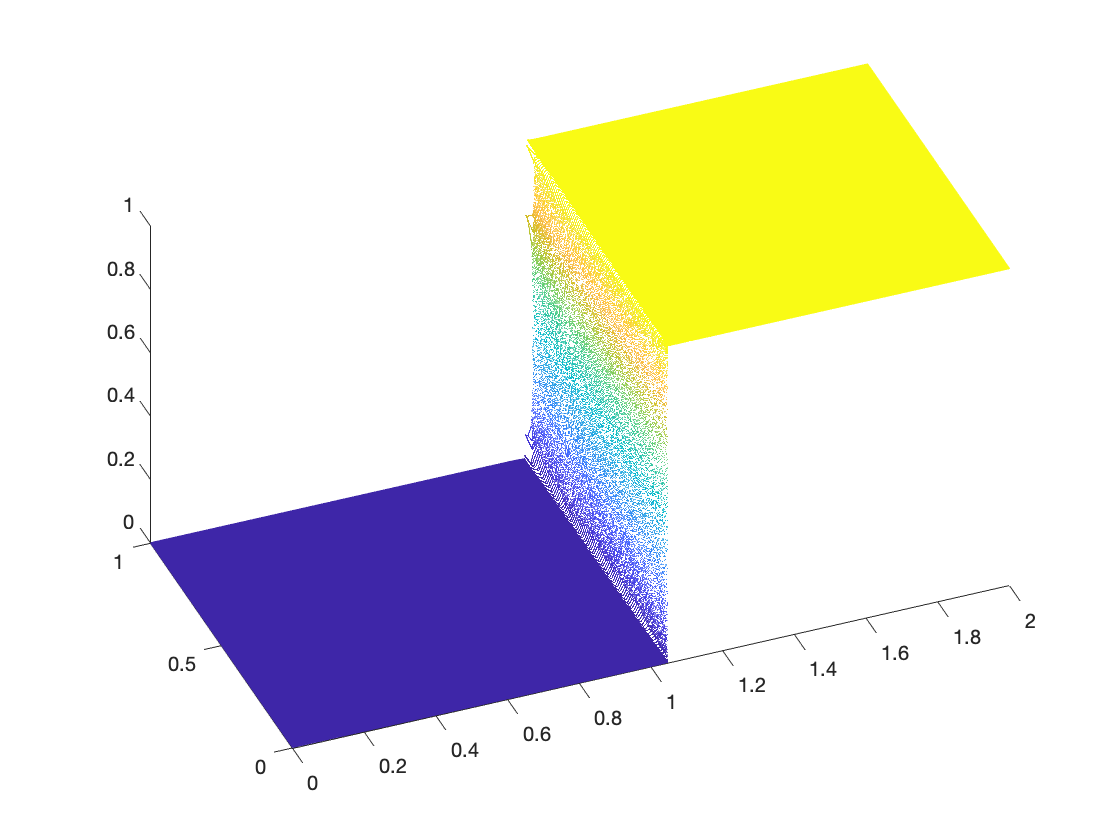}}
~
\subfigure[RT1P1-LSFEM]{
\includegraphics[width=0.45\linewidth]{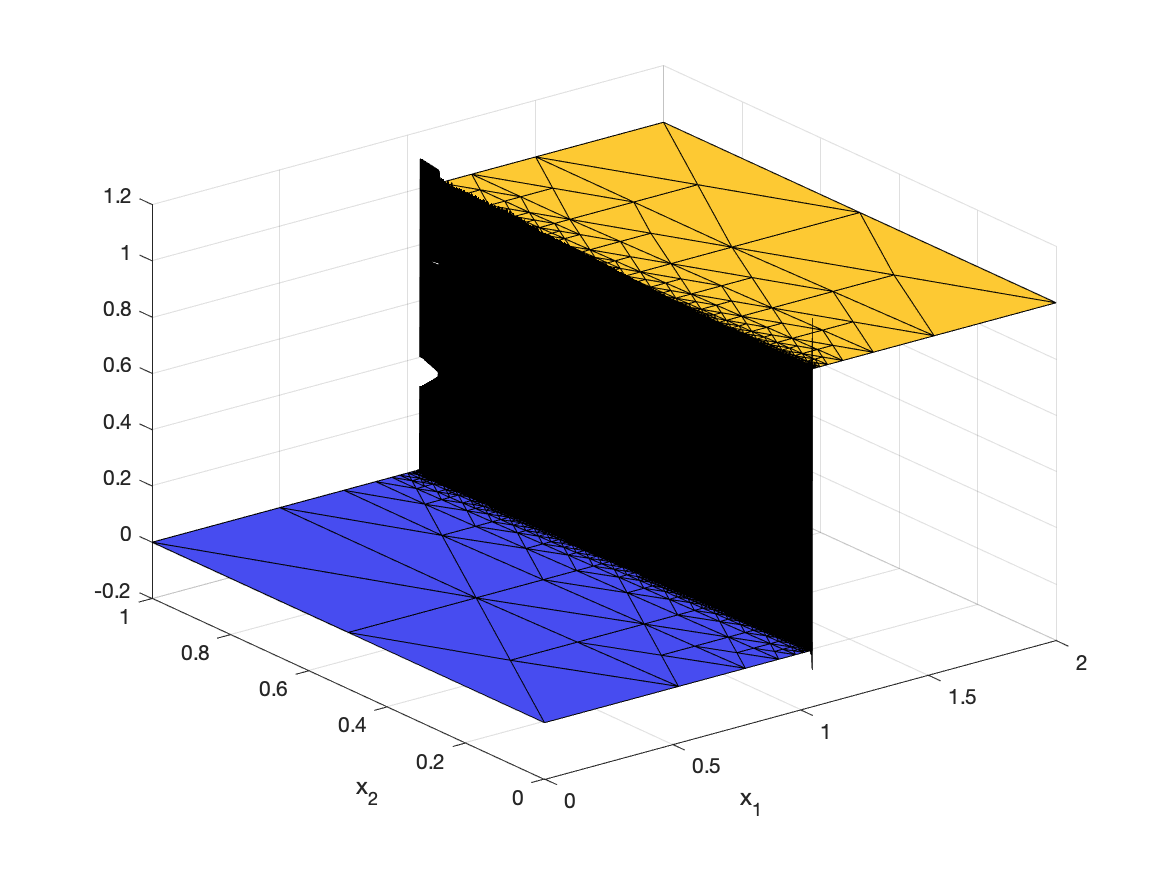}}
\caption{Piecewise constant solution with a non-matching grid test problem: numerical solutions with RT0P0 and RT1P1-LSFEMs}
 \label{afem_sol_lsfem_dg}
\end{figure}

%

\subsection{An example with a piecewise smooth solution, non-matching grid}
Consider the following simple problem with $\bbeta = (\cos(1/8),\sin(1/8))^T$ and $\O = (0,1)^2$. The inflow boundary is $\{x=0, y\in (0,1)\} \cup \{x\in (0,1), y=0\}$, i.e., the west and south boundaries 
of the domain. Let $\gamma =1$. Choose $g$ and $f$ such that the exact solution $u$ is
$$
u = \left\{ \begin{array}{lll}
\sin(x+y) &\mbox{if}& y>\tan(1/8)x, \\[2mm]
\cos(x+y) &\mbox{if}& y< \tan(1/8)x.
\end{array} \right.
$$
Note that with an initial mesh as in Fig. \ref{initialmesh}, any refinement of it will never match the discontinuity. 

We show the uniform convergence result on the left of Fig. \ref{error_pws_nm_uniform}. 
The convergence oder in LS norms is about $0.8$. 
Similar to the piecewise constant solution on non-matching grids,
it is worse than order $1$ but better than order $1/2$.
The convergence order for $\|u-u_h\|_0$ is about $0.3$, 
which is worse than the piecewise constant non-matching 
case.

On the center Fig. \ref{error_pws_nm_uniform}, an adaptive mesh by LSFEM is shown. Many refinements are generated near the discontinuity. 
On the right of Fig. \ref{error_pws_nm_uniform}, convergence history of adaptive LSFEM is shown. The rate of convergence of error in the LS norm is about order $1$, and $\|u-u_h\|_0$ is about order $0.5$.

On Fig. \ref{sol_pws_nm_afem}, we show the numerical solutions computed by RT0P0-LSFEM and RT1P1-LSFEM. The overshooting is quite severe on the numerical solution obtained by RT1P1-LSFEM .

\begin{figure}[!htb]
\centering 
\subfigure[convergence history on uniformly refined meshes]{ 
\includegraphics[width=0.3\linewidth]{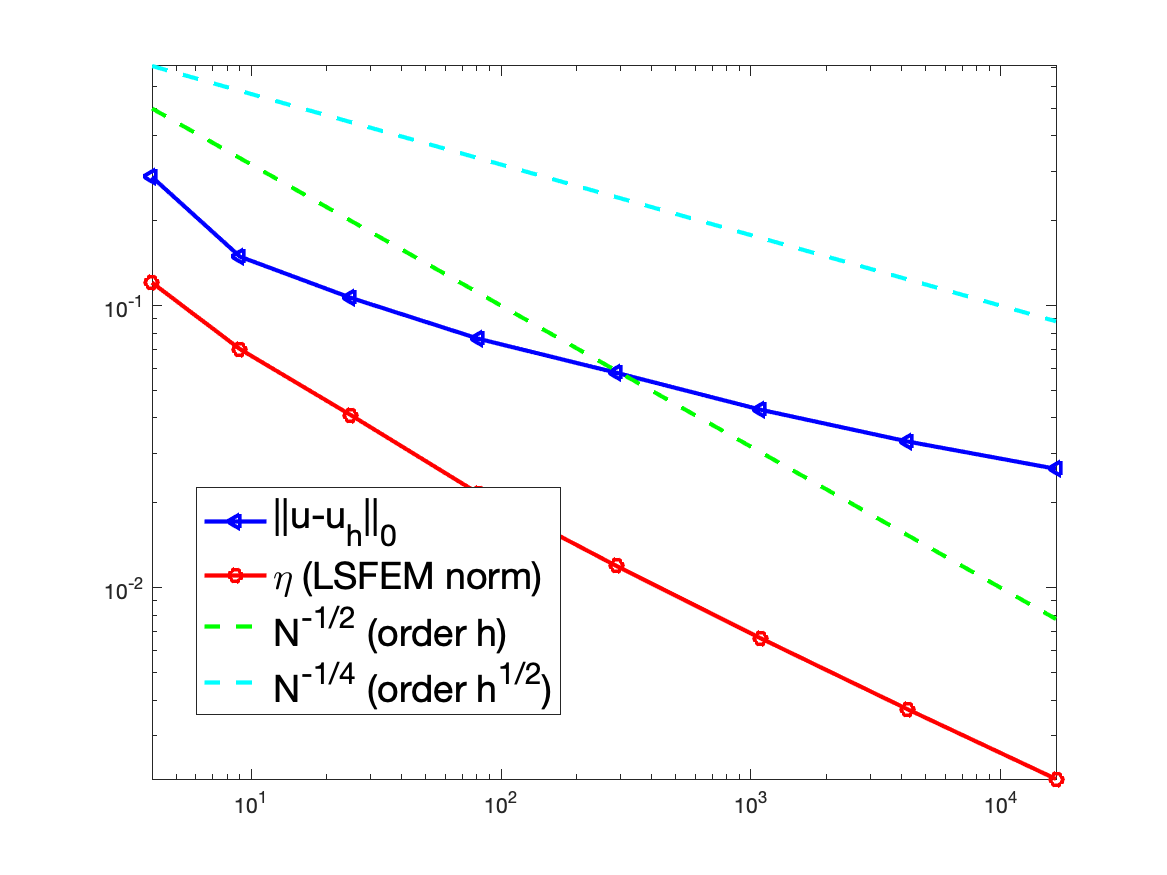}}
~
\subfigure[adaptive refined mesh]{
\includegraphics[width=0.3\linewidth]{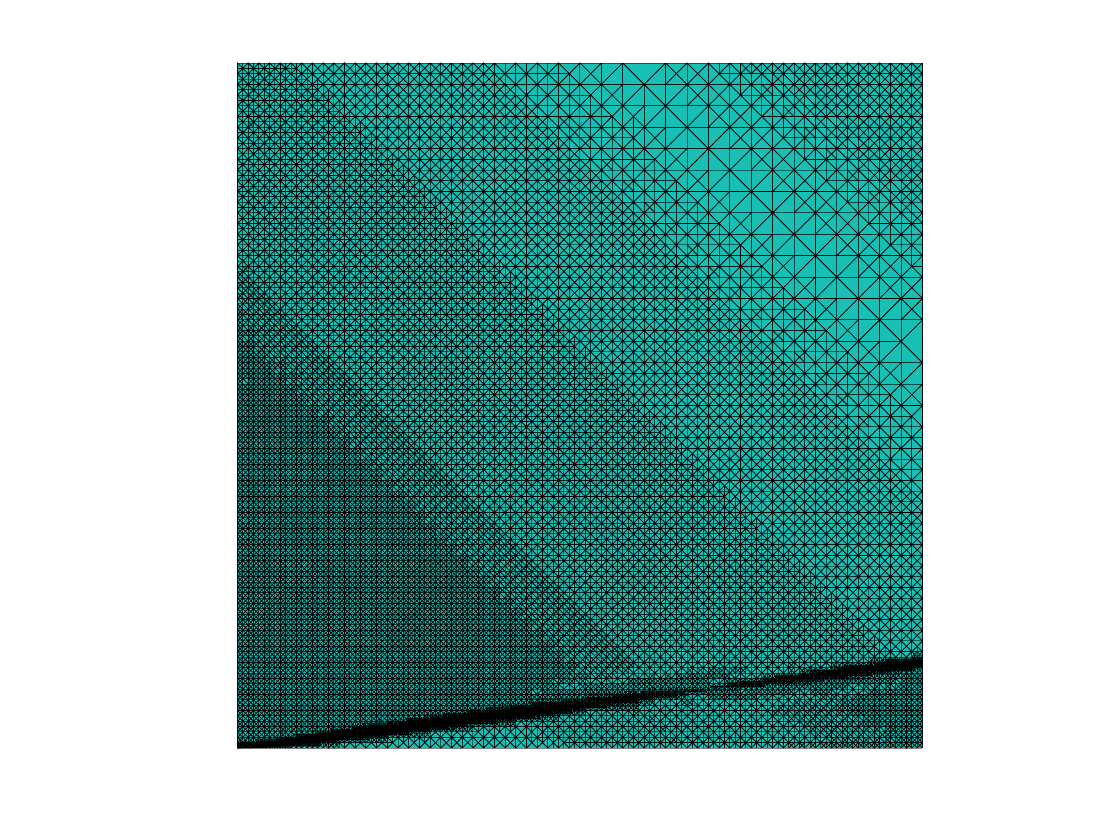}}
~
\subfigure[convergence history on adaptive refined meshes]{
\includegraphics[width=0.3\linewidth]{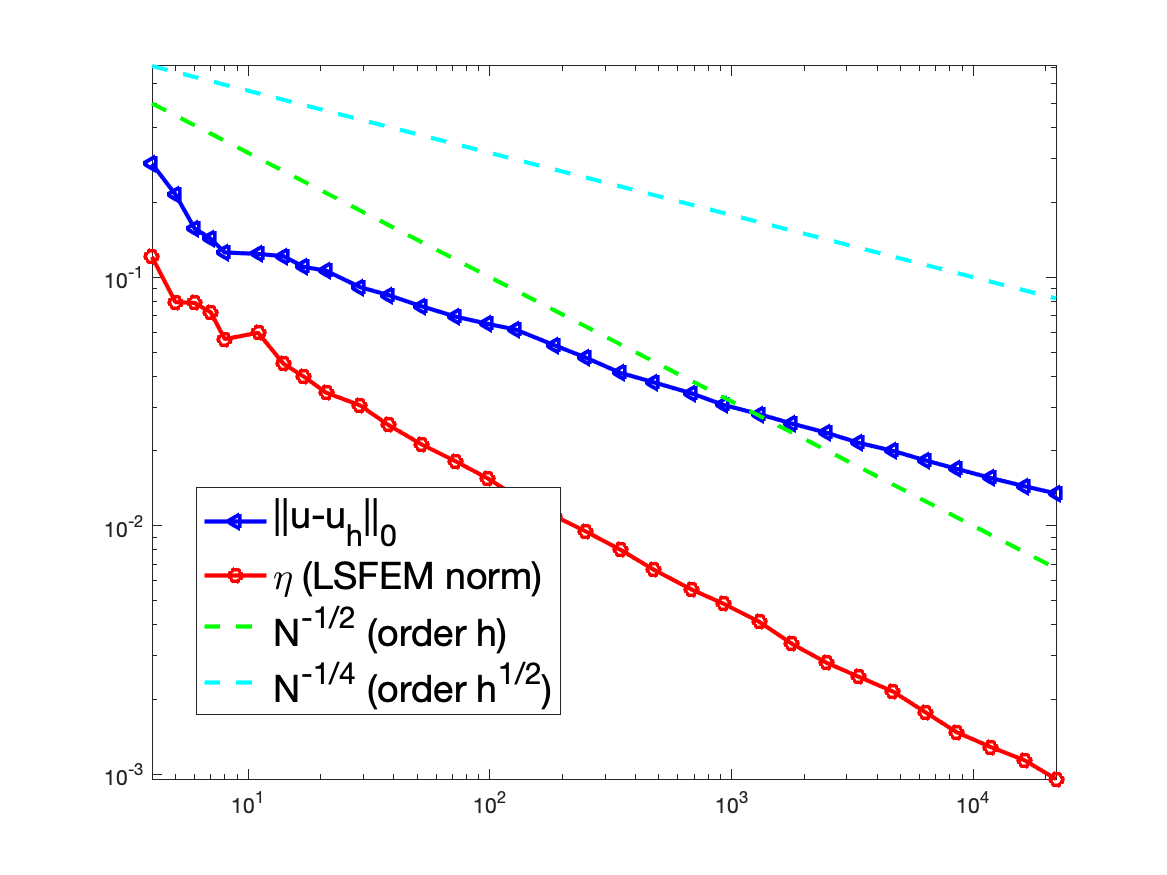}}
\caption{Piecewise smooth solution with a non-matching grid test problem by LSFEM}
 \label{error_pws_nm_uniform}
\end{figure}


\begin{figure}[!htb]
\centering 
\subfigure[RT0P0-LSFEM]{ 
\includegraphics[width=0.45\linewidth]{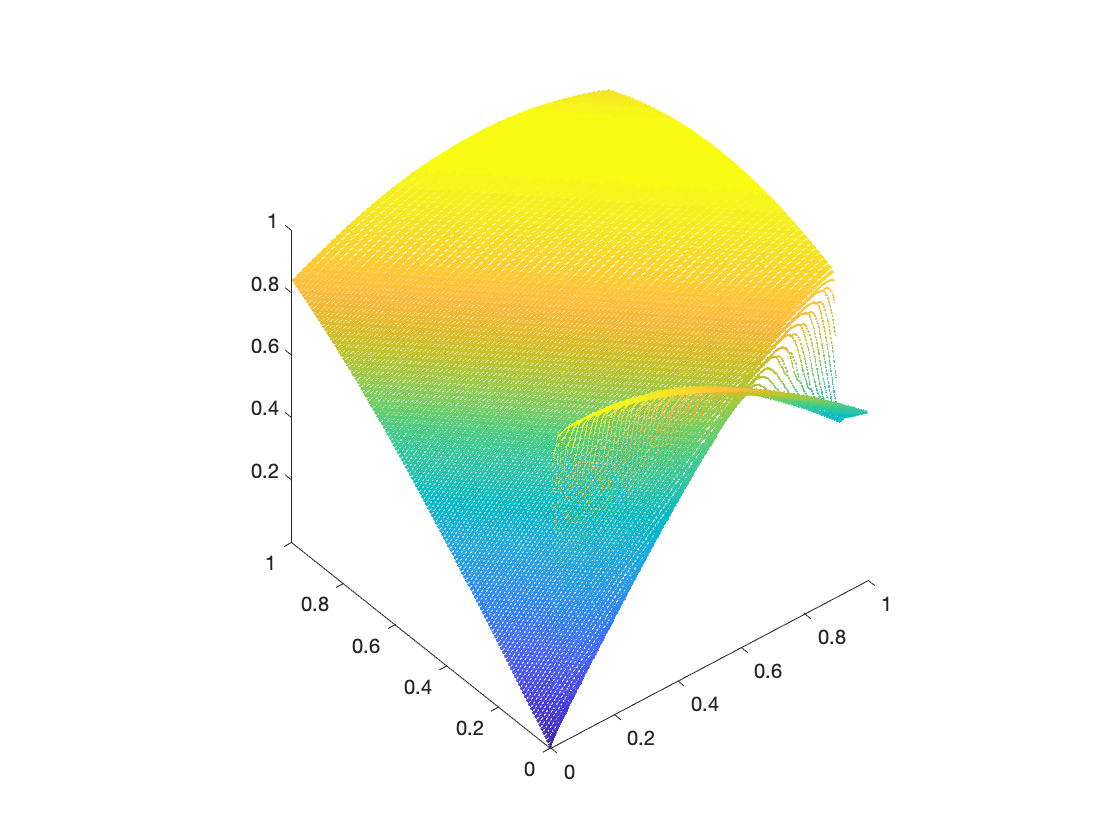}}
~
\subfigure[RT1P1-LSFEM]{
\includegraphics[width=0.45\linewidth]{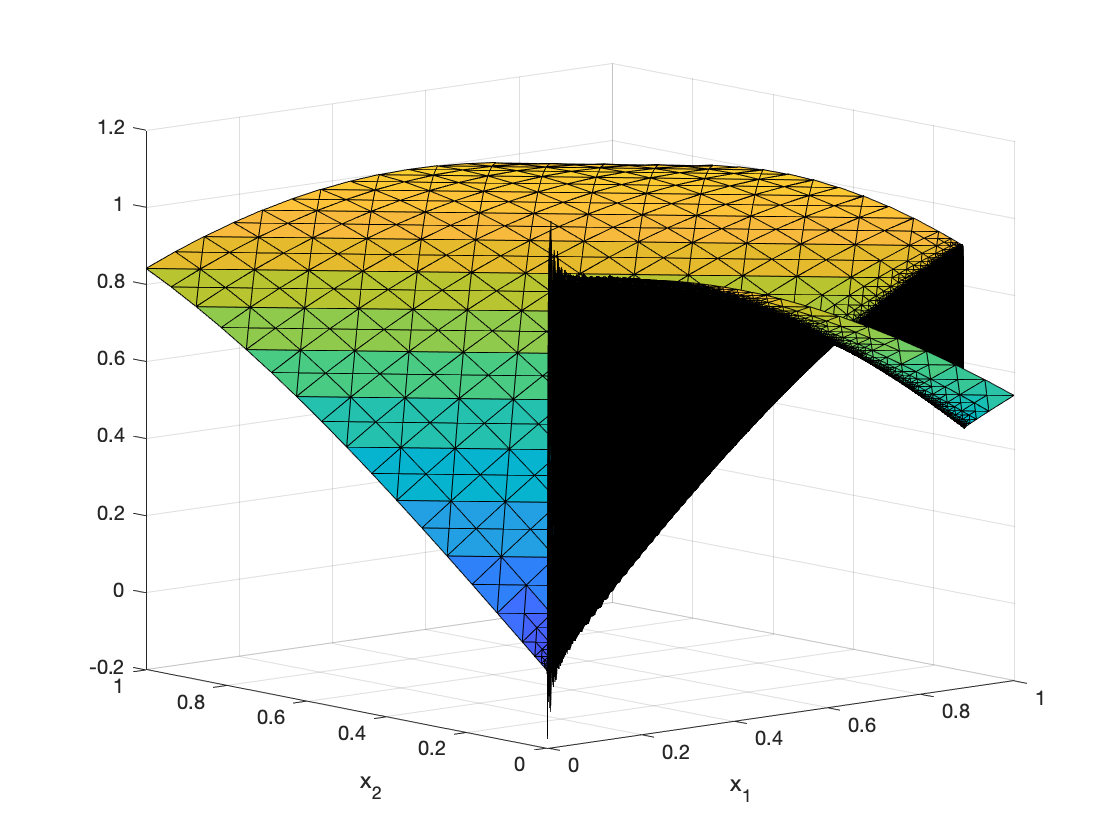}}
\caption{Piecewise smooth solution with a non-matching grid test problem: numerical solutions with RT0P0 and RT1P1-LSFEMs}
 \label{sol_pws_nm_afem}
\end{figure}


\subsection{Curved transport examples}
\subsubsection{Curved transport problem 1: zero-one example}
We consider an example similar to an example in 4.4.2 of \cite{Guermond:04}. Consider the problem on the half disk 
$\O = \{(x,y) \colon x^2+y^2<1; y>0\}$. Let the inflow boundary be $\{-1<x<0; y=0\}$. Choose the advection field 
$\bbeta = (\sin \theta, -\cos \theta)^T = (y/\sqrt{x^2+y^2}, -x/\sqrt{x^2+y^2})^T$, with $\theta$ being the polar angle.
Let $\gamma=0$, $f=0$, and the inflow condition and the exact solution be
$$
g = \left\{ \begin{array}{lll}
1 & \mbox{if}  &-1<x<-0.5, \\[2mm]
0 & \mbox{if}  &-0.5<x<0,
\end{array} \right.
\mbox{and}\quad
u = \left\{ \begin{array}{lll}
1 & \mbox{if  }  x^2+y^2 > 0.25, \\[2mm]
0 & \mbox{otherwise}. 
\end{array} \right.
$$ 

\begin{figure}[!htb]
\centering 
\subfigure[initial mesh]{ 
\includegraphics[width=0.45\linewidth]{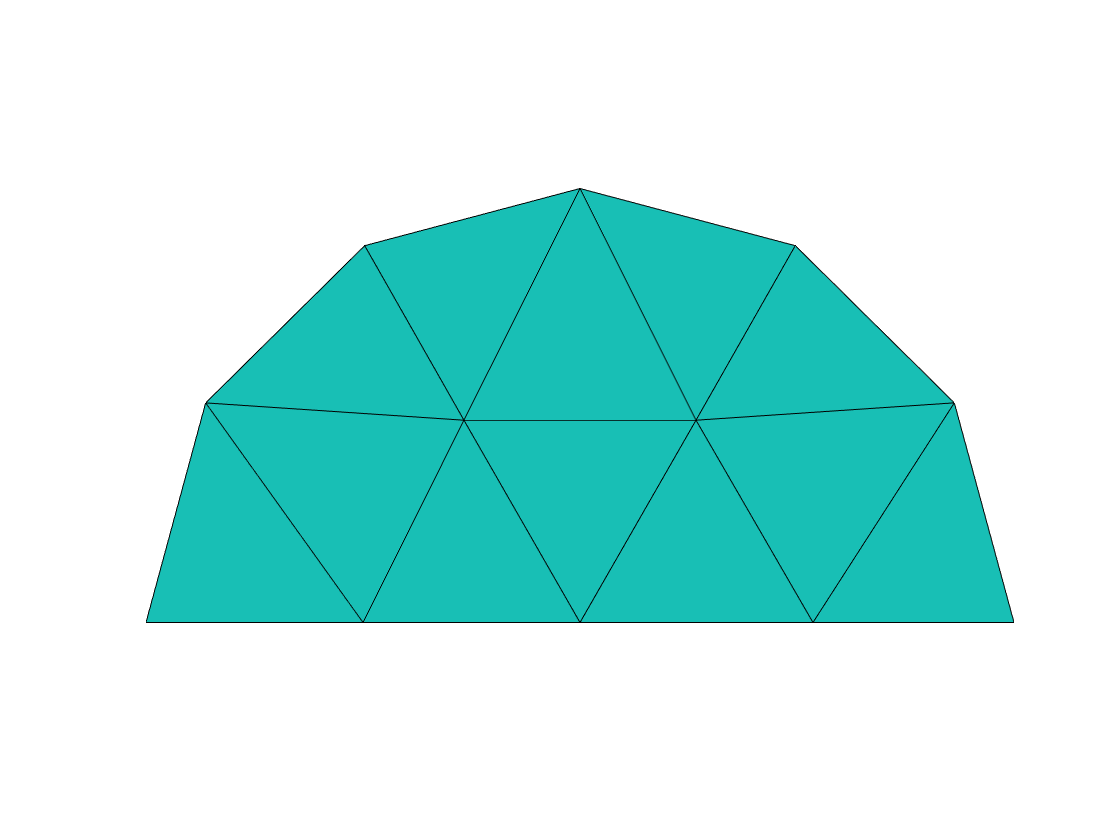}}
~
\subfigure[numerical solution (LSFEM) on an almost uniform mesh]{
\includegraphics[width=0.45\linewidth]{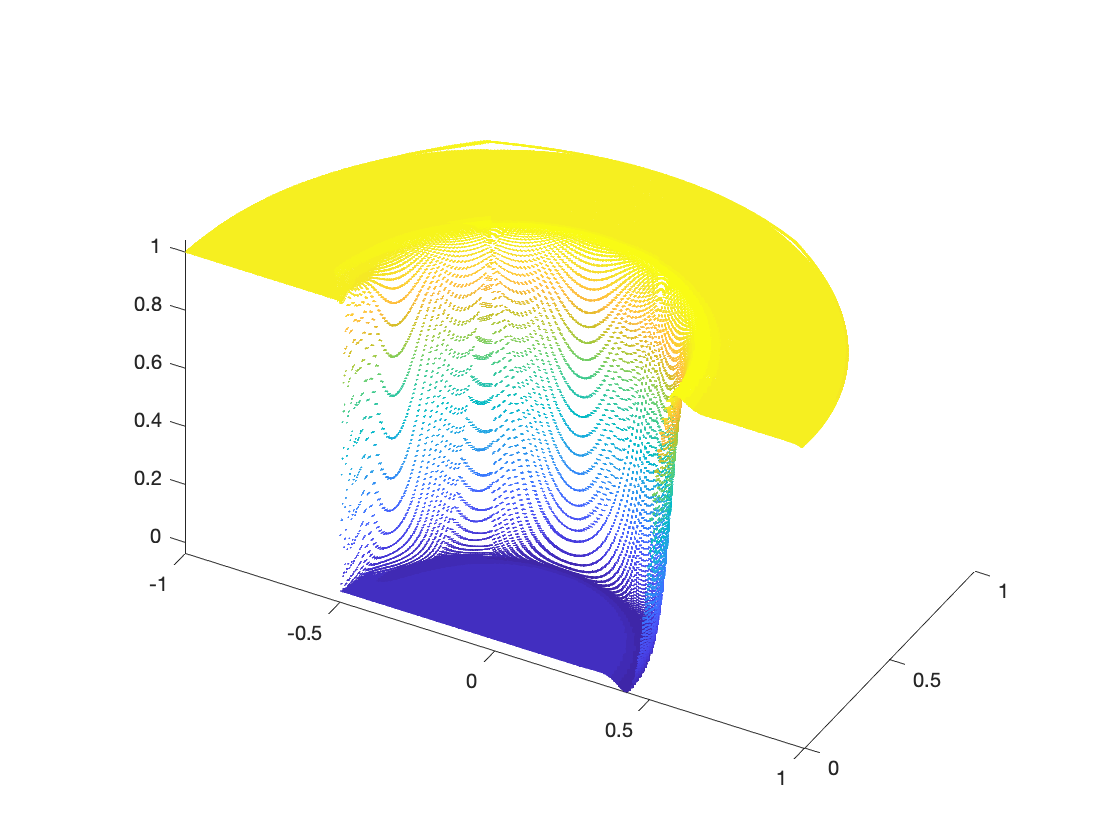}}
\caption{Curved transport problem 1}
 \label{curved_initialmesh}
\end{figure}

We choose an initial mesh to be as shown on the left of Fig. \ref{curved_initialmesh}. 
We choose the bottom central node to be $(0,0)$ and the node left of it  to be $(-0.5,0)$. 
So the inflow boundary mesh is matched with the inflow boundary condition. 
Since the advection field is curved and so is the discontinuity, 
the mesh will never be aligned with the discontinuity even after refinements.
Since the boundary is a half circle, when the mesh refinement is performed, an extra step is taken to map those boundary nodes to the right positions on the circle.

We show the numerical solution computed by LSFEM  on a mesh after 8 uniform refinements 
of the initial mesh 
on the right of Fig.  \ref{curved_initialmesh} (LSFEM-B solutions are similar). 
Small overshooting can be observed near the discontinuity. 
Along the radius, the solution is essentially one dimensional,
we project the graph of the solution onto the radius, see the left of Fig. \ref{curved_solutionuniform_projected}.
We do see the small under and overshooting.  
The maximum and minimum values of numerical solution $u_h$ are $1.0401$ and $-0.0381$, respectively.

With uniform refinements,
the convergence rate of the error in the least-squares norm is about $0.81$ 
and the rate of $\|u-u_h\|_0$ is about $0.25$, 
see the right of Fig. \ref{curved_solutionuniform_projected}. 
Since the mesh is not aligned with the discontinuity, 
the convergence order of the LS energy norm is smaller than $1$.

\begin{figure}[!htb]
\centering 
\subfigure[: projected numerical solutions on an almost uniform mesh]{ 
\includegraphics[width=0.45\linewidth]{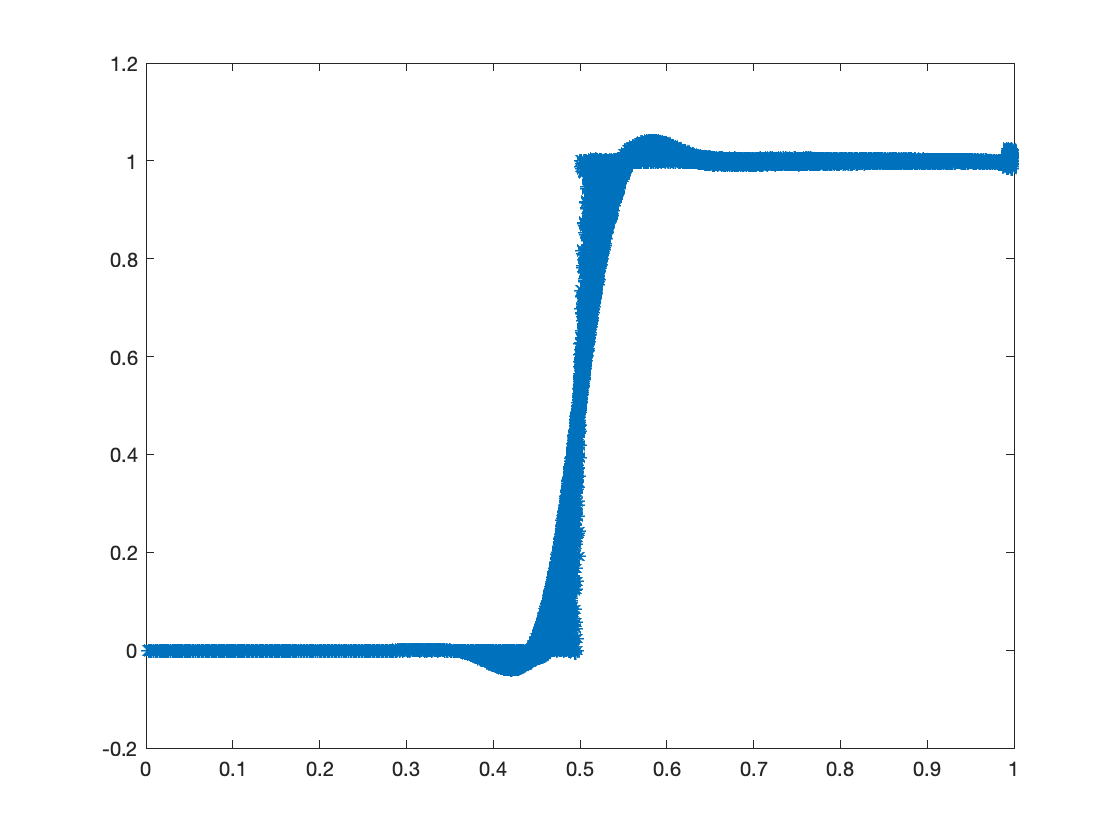}}
~
\subfigure[convergence history on uniform refined meshes]{
\includegraphics[width=0.45\linewidth]{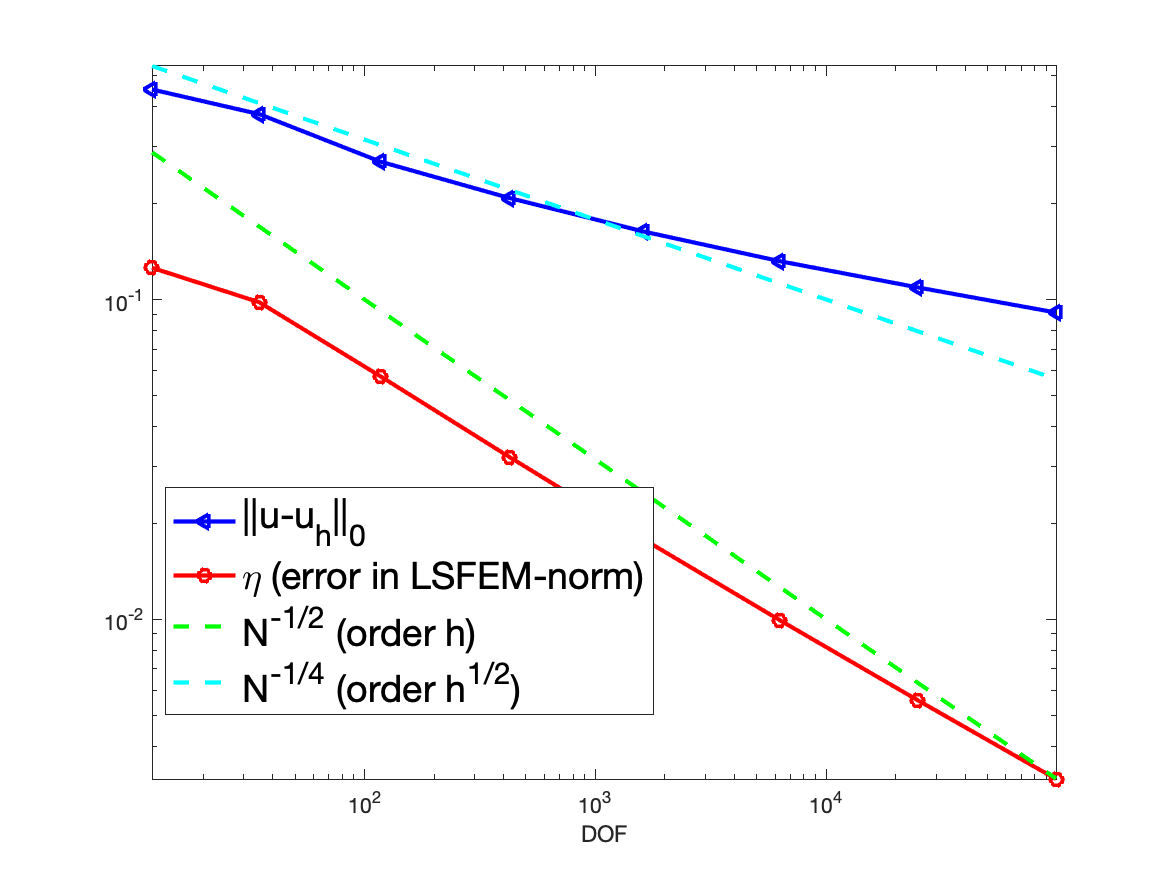}}
\caption{Curved transport problem 1}
 \label{curved_solutionuniform_projected}
\end{figure}

On the left of Fig. \ref{curved_os}, we show the adaptive mesh generated by LSFEM after several iterations. We see many refinements along the discontinuity which is very natural. Also, almost uniform refinements can be found in the half ring where $u=1$. The reason is that even $u$ is a constant $1$, the flux $\bsigma = \bbeta$ is not a constant vector and has approximation errors. On the other hand, in the region where $u=0$, the flux is also a zero vector and can be exactly computed. So no refinement is needed in the inner half circle. 

On the right of  Fig. \ref{curved_os}, we show the convergence history of the adaptive method. With adaptive refinements, the convergence order of the error in the LS norm is about $1$ and is optimal, and the rate of $\|u-u_h\|_0$ is about $0.5$.

\begin{figure}[!htb]
\centering 
\subfigure[an adaptive refined mesh]{ 
\includegraphics[width=0.45\linewidth]{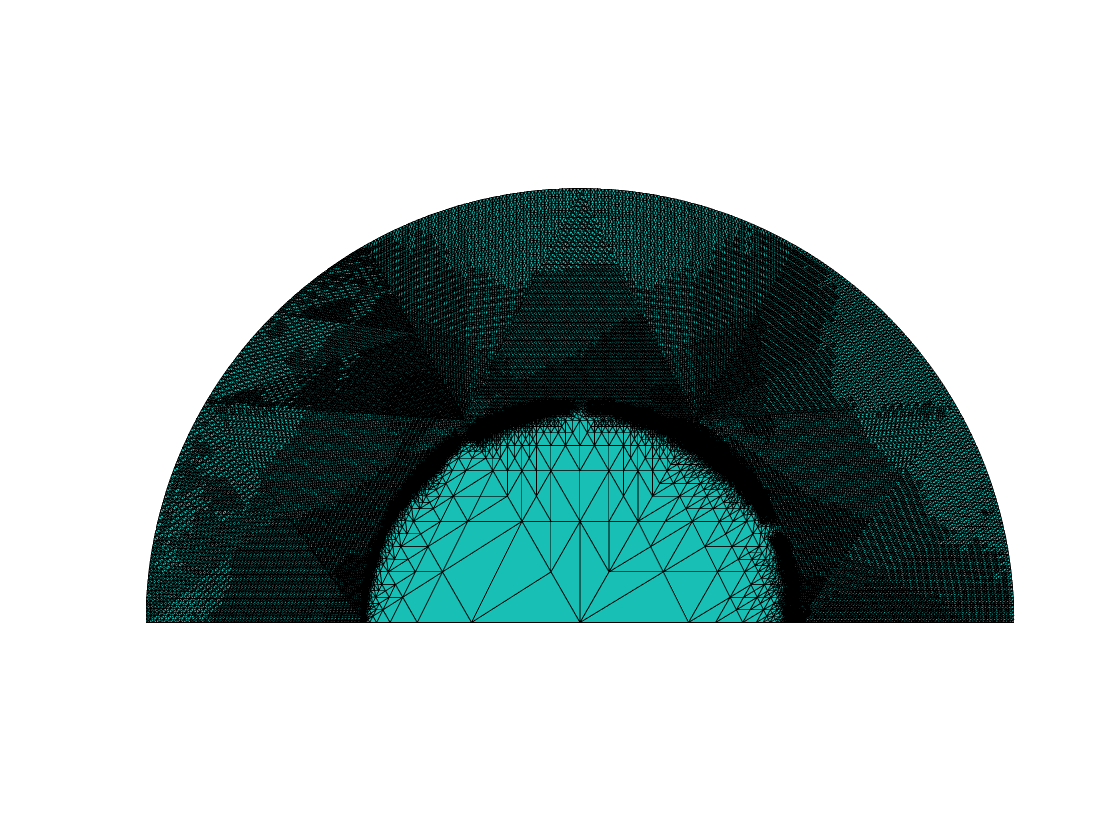}}
~
\subfigure[convergence history]{
\includegraphics[width=0.45\linewidth]{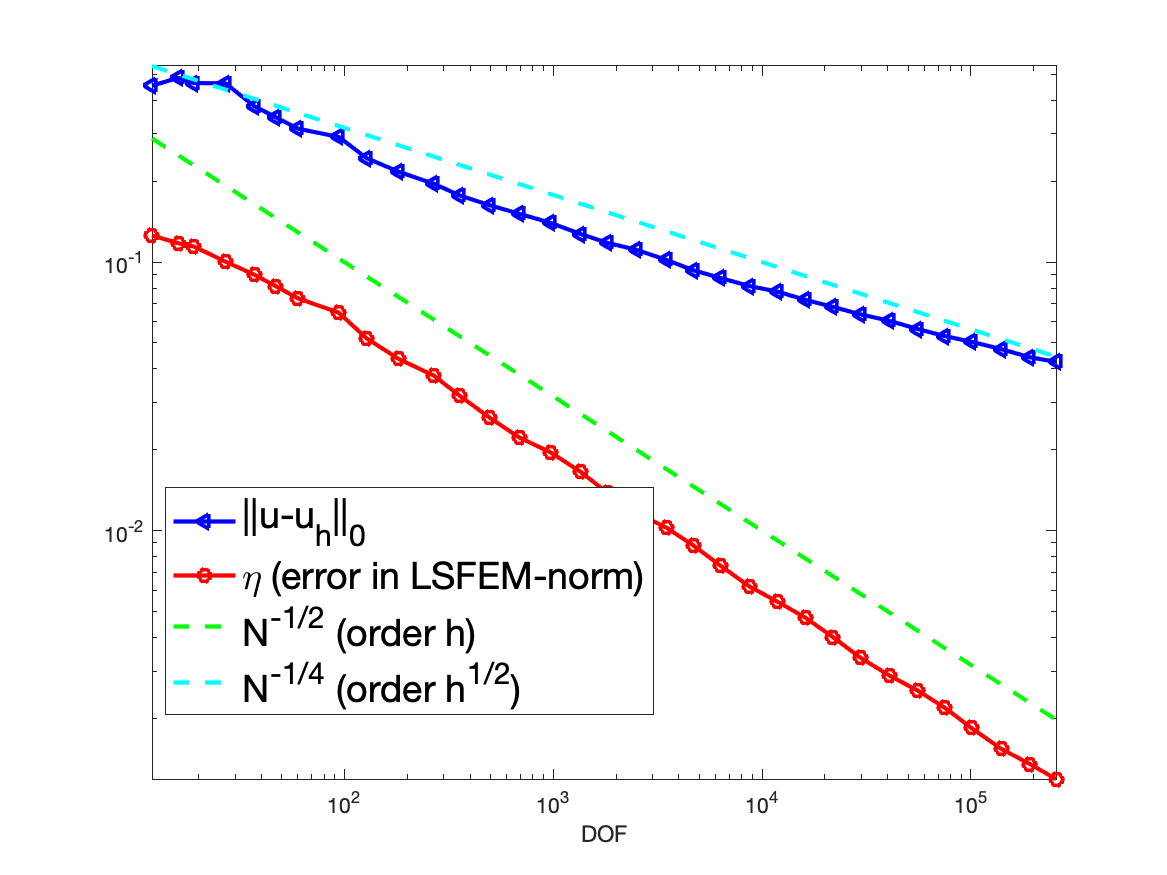}}
\caption{Curved transport problem 1 with adaptive LSFEM}
 \label{curved_os}
\end{figure}

On the left of Fig. \ref{curved_adaptivemesh}, we show the reduction of overshooting values of the $RT_0\times P_0$ LSFEM solution. After the initial stages, the overshooting values is decreasing with refined meshes along the discontinuity (although not strictly monotonically).

On the right of Fig. \ref{curved_adaptivemesh}, the projected solution is shown on the final mesh. We can see that the overshooting is very small compared with the uniform refinements. Thus the Gibbs phenomena is not observed.

\begin{figure}[!htb]
\centering 
\subfigure[reduction of overshooting]{ 
\includegraphics[width=0.45\linewidth]{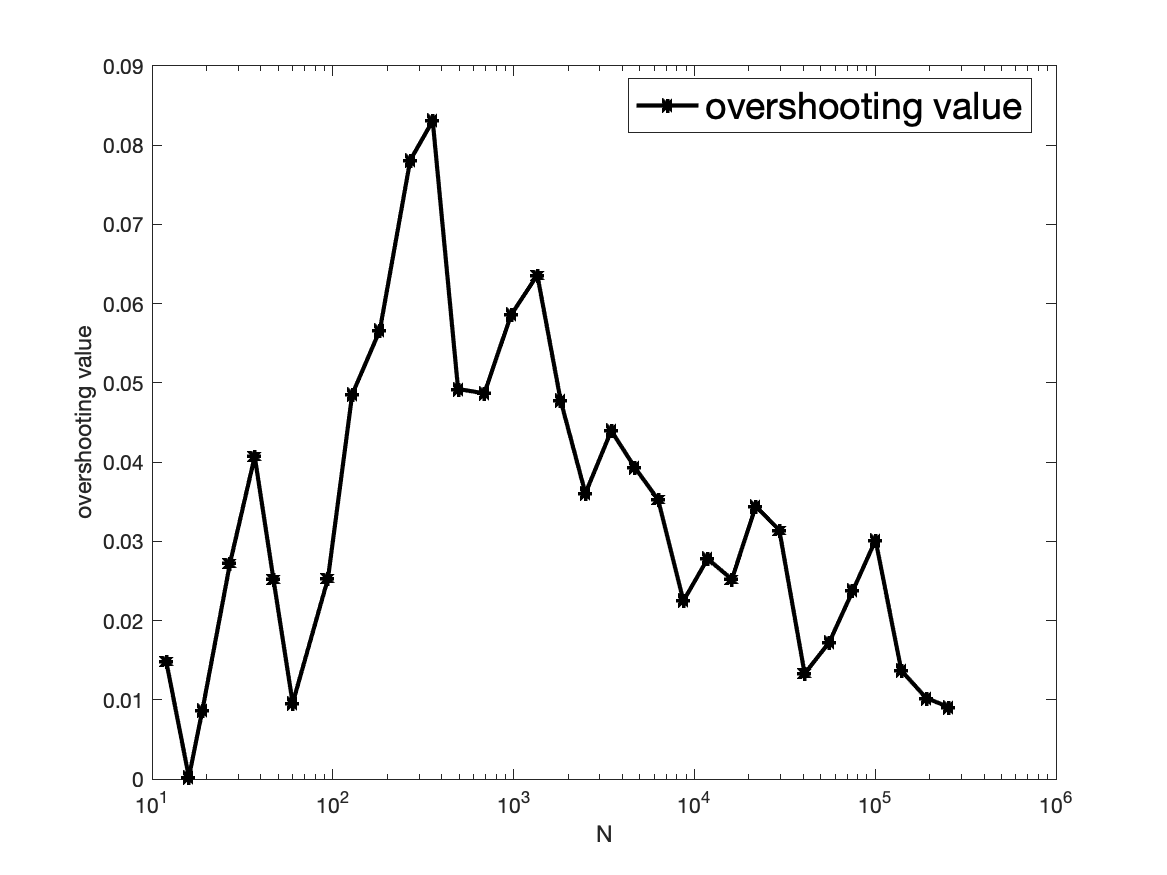}}
~
\subfigure[projected solution]{
\includegraphics[width=0.45\linewidth]{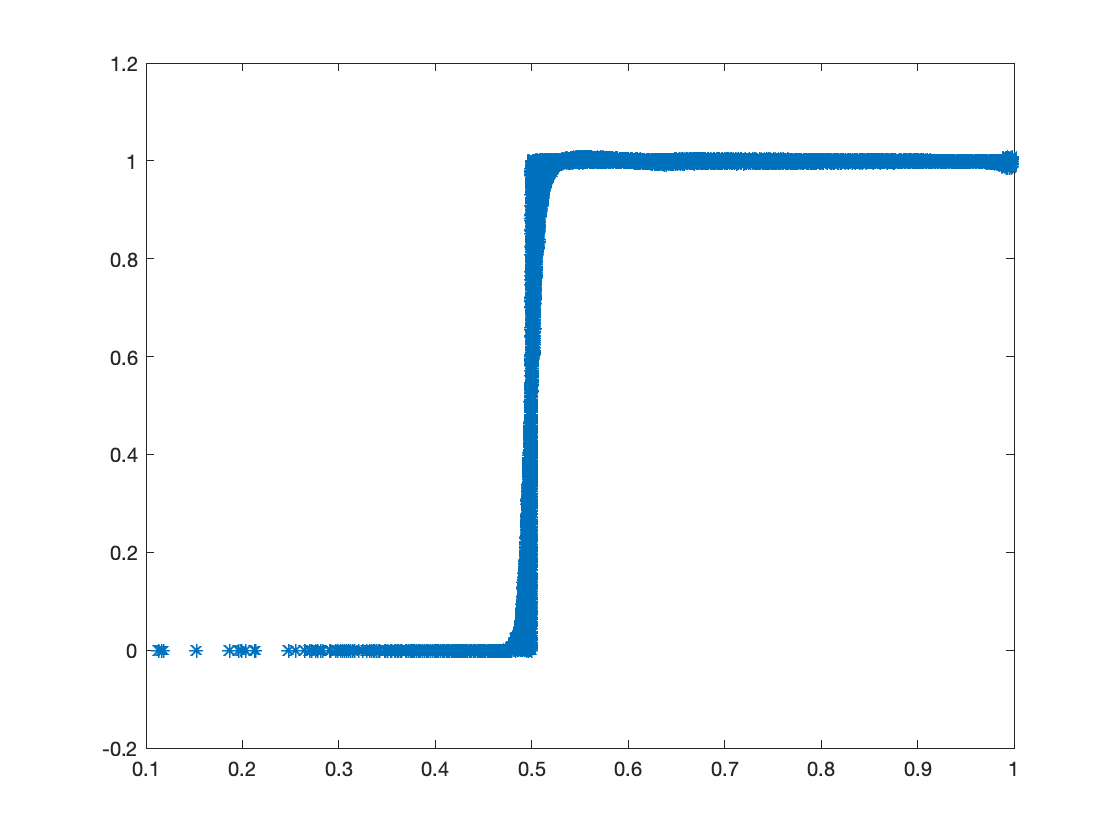}}
\caption{Curved transport problem 1 with adaptive LSFEM}
 \label{curved_adaptivemesh}
\end{figure}

If we choose $\a_F =1$ in the LSFEM-B2 formulation, the numerical computation is not right for this problem. On the left of Fig. \ref{curved_bad_1}, the refined mesh generated by LSFEM-B2 and error estimator $\xi$ is shown. Many unnecessary refinements along the inflow boundary are seen. On the right of Fig. \ref{curved_bad_1}, we show the convergence histories. For the error measured in the LS norm $\tri \cdot \tri_B$ the order is optimal, but $\|u-u_h\|_0$ is not decreasing.  On Fig. \ref{curved_bad_2}, the numerical solution and its projected version are shown. It is very clear the solution is not accurate under this mesh and LSFEM-B with $\a_F=1$.

These all suggest that if we simply choose $\a_F =1$ in LSFEM-B2, the $\tri \cdot \tri_B$ norm is not well balanced, the weight on the boundary term is too weak. The choice $\a_F=10$ is big enough to have enough boundary weight.

\begin{figure}[!htb]
\centering 
\subfigure[refined mesh generated by LSFEM-B2 ]{ 
\includegraphics[width=0.45\linewidth]{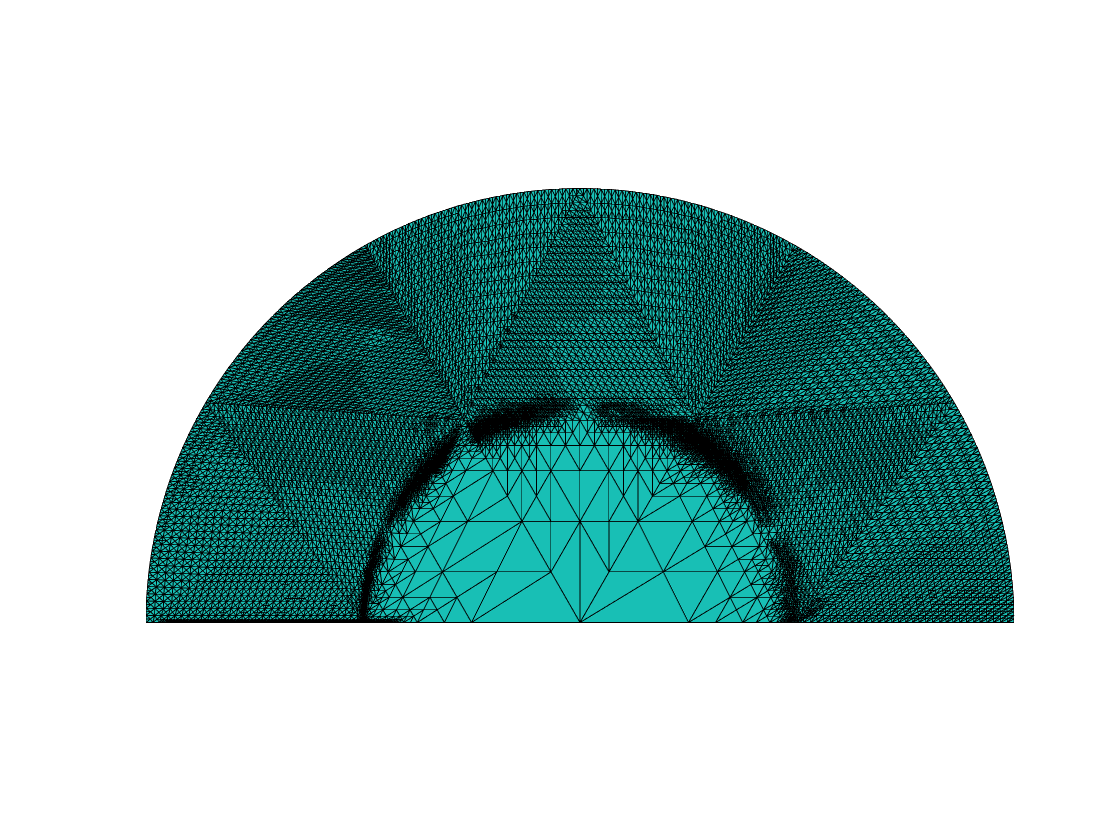}}
~
\subfigure[convergence history]{
\includegraphics[width=0.45\linewidth]{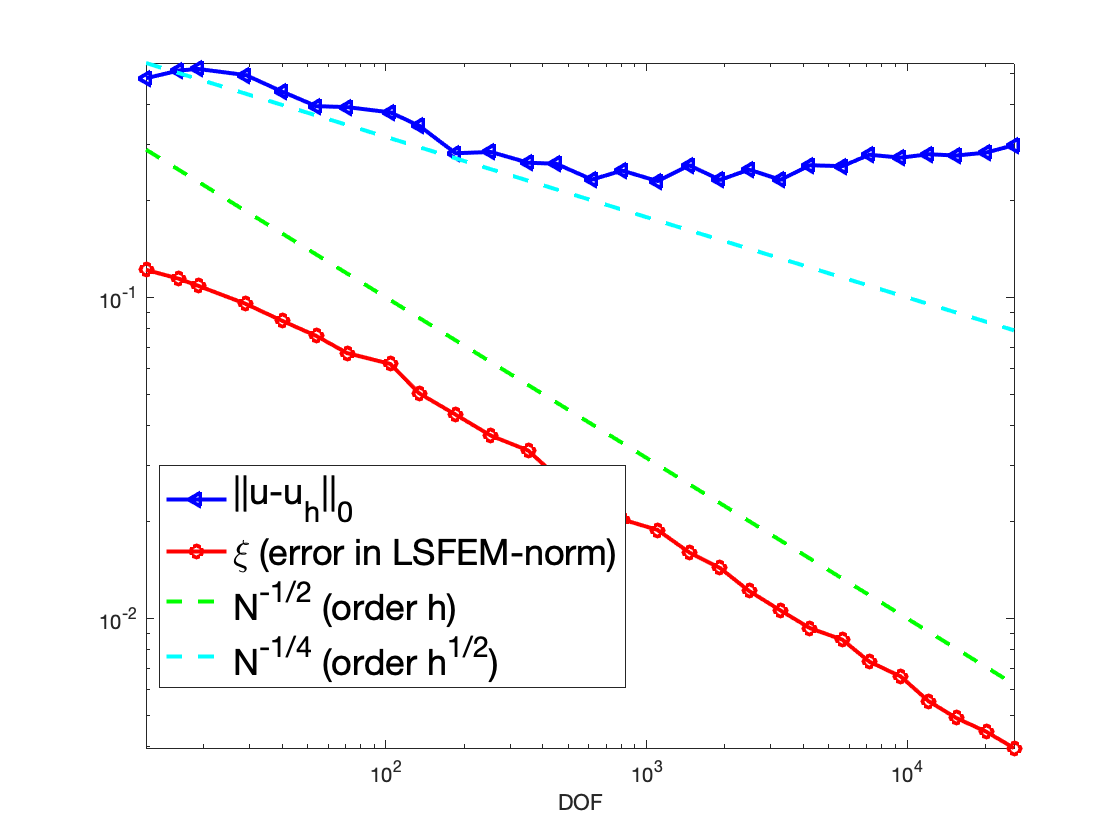}}
\caption{Curved transport problem 1: LSFEM-B2 with $\a_F=1$}
 \label{curved_bad_1}
\end{figure}

\begin{figure}[!htb]
\centering 
\subfigure[numerical solution]{ 
\includegraphics[width=0.45\linewidth]{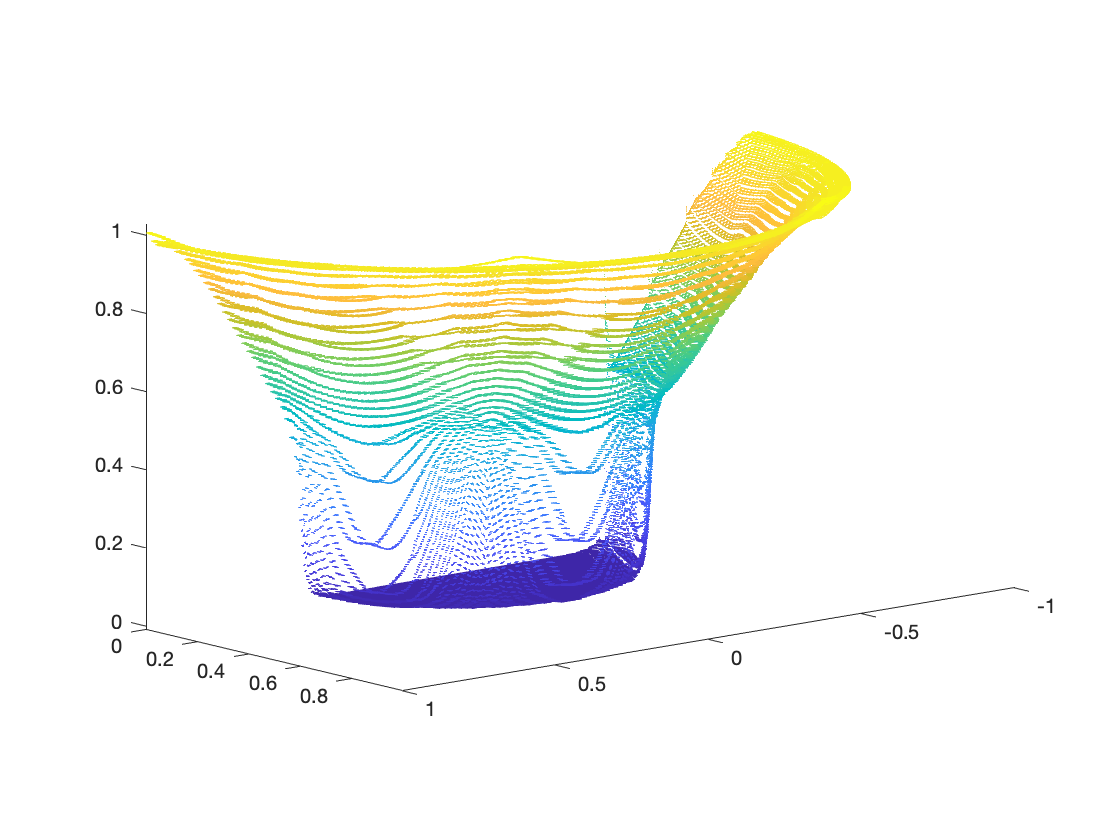}}
~
\subfigure[projected solution]{
\includegraphics[width=0.45\linewidth]{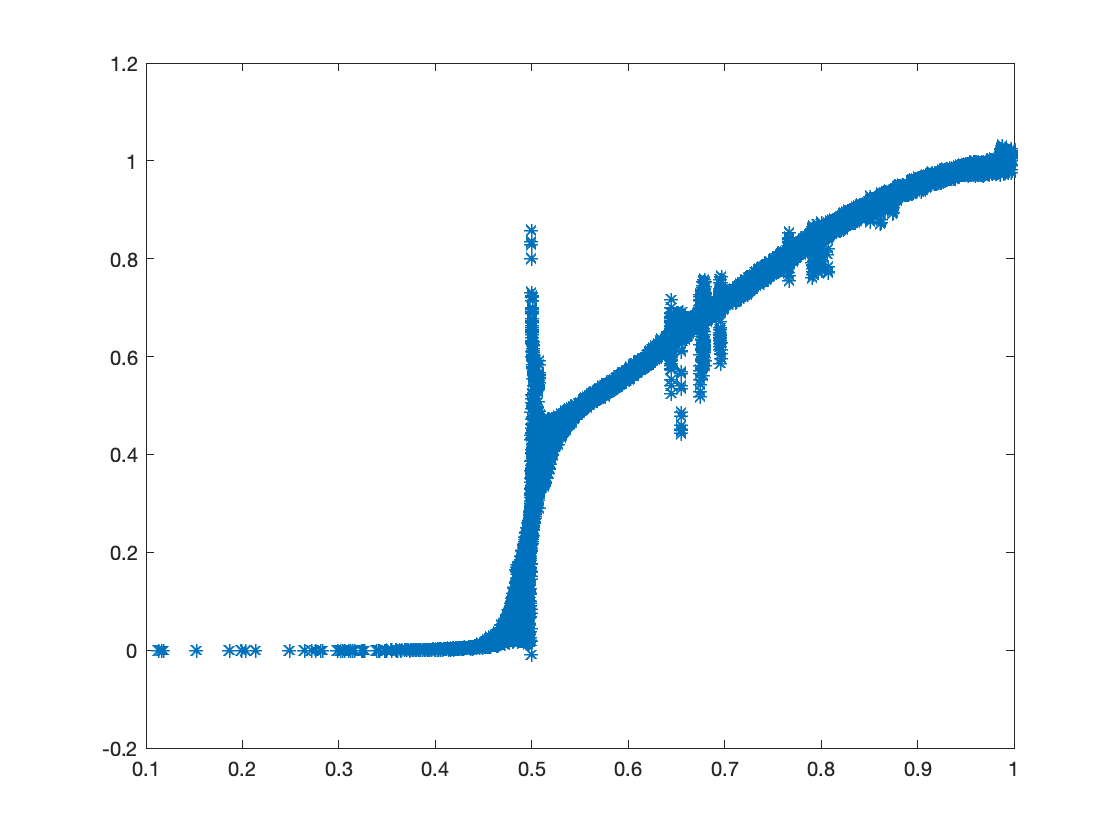}}
\caption{Curved transport problem 1: LSFEM-B2 with $\a_F=1$}
 \label{curved_bad_2}
\end{figure}

\subsubsection{Curved transport problem 2: negative-one-one example} We modify the previous example by letting the inflow condition and the exact solution be
$$
g = \left\{ \begin{array}{lll}
1 & \mbox{if}  &-1<x<-0.5, \\[2mm]
-1 & \mbox{if}  &-0.5<x<0,
\end{array} \right.
\mbox{and}\quad
u = \left\{ \begin{array}{lll}
1 & \mbox{if  }  x^2+y^2 > 0.25, \\[2mm]
-1 & \mbox{otherwise}. 
\end{array} \right.
$$
Note that even the solution $u$ in the inner half disk $\{x^2+y^2<0.25, y>0\}$ is still a constant vector, the flux $\bsigma = \bbeta u = -\bbeta$ is not. At the origin 
$(0,0)$, the flux is singular, so it is expected that there are many refinements around the 
origin. 
%

The left of  Fig. \ref{curved_adaptivemesh_11_eta} is a refined mesh. 
It is clear that the mesh is refined around the origin and the discontinuities. 
The right of Fig. \ref{curved_adaptivemesh_11_eta} shows the convergence history.
The order of LS energy norm is $1$ and that of $\|u-u_h\|_0$ is $1/2$.

\begin{figure}[!htb]
\centering 
\subfigure[a refined mesh]{ 
\includegraphics[width=0.45\linewidth]{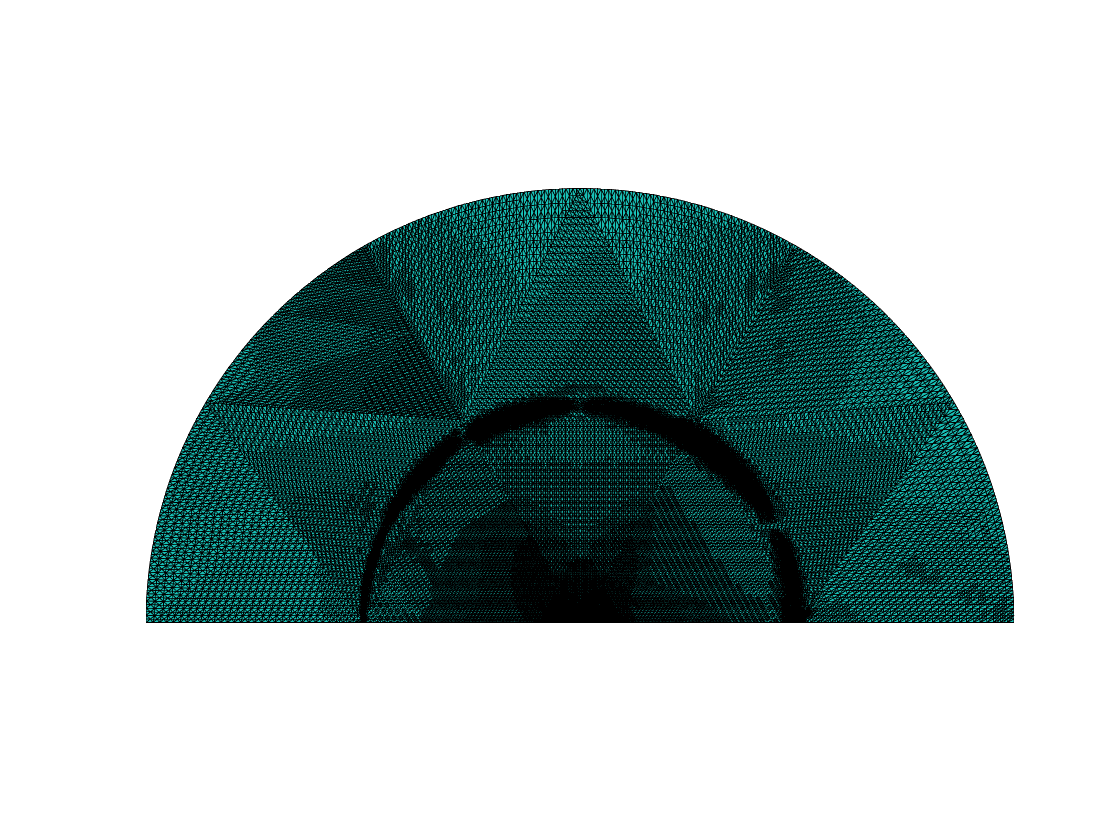}}
~
\subfigure[adaptive convergence history]{
\includegraphics[width=0.45\linewidth]{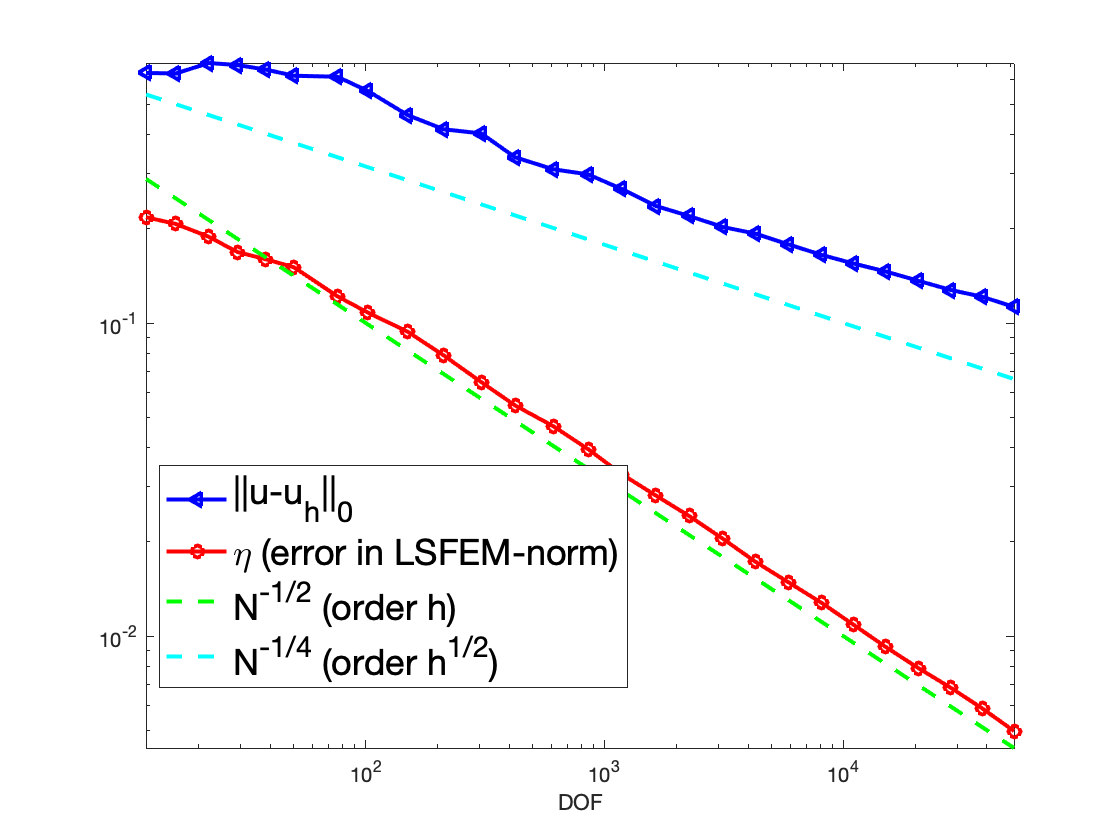}}
\caption{Curved transport problem 2}
 \label{curved_adaptivemesh_11_eta}
\end{figure}

%

\subsection{A smooth example with a sharp transient layer}
Consider the following problem: $\O = (0,1)^2$, $\gamma=0.1$, $f=0$, and $\bbeta = (y+1,-x)^T/\sqrt{x^2+(y+1)^2}$. The inflow boundary is $\{x=1, y\in (0,1)\} \cup \{x\in (0,1), y=0\}$, i.e., the west and north boundaries of the domain.  Choose $g$ such that the exact solution $u$ is
$$
u = \dfrac{1}{4}\exp\left(\gamma r\arcsin \left(\dfrac{y+1}{r}\right)\right) 
\arctan \left(\dfrac{r-1.5}{\epsilon}\right), \mbox{ with } r = \sqrt{x^2+(y+1)^2}.
$$

When $\epsilon = 0.01$, the layer can be fully resolved, see the left of Fig. \ref{burman_sol}. When $\epsilon = 10^{-10}$, the layer is never fully resolved in our experiments and can be viewed as discontinuous, see the right of Fig. \ref{burman_sol}. 

\begin{figure}[!htb]
\centering 
\subfigure[$\epsilon = 10^{-2}$]{ 
\includegraphics[width=0.45\linewidth]{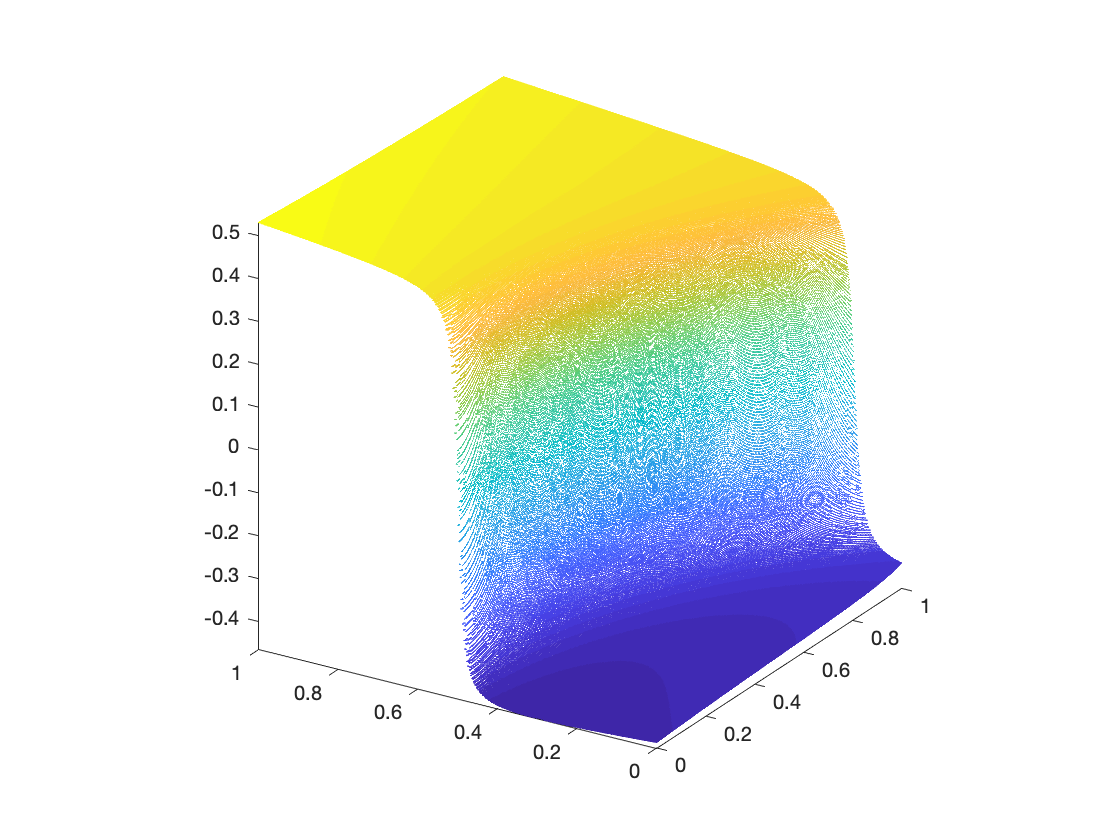}}
~
\subfigure[$\epsilon = 10^{-10}$]{
\includegraphics[width=0.45\linewidth]{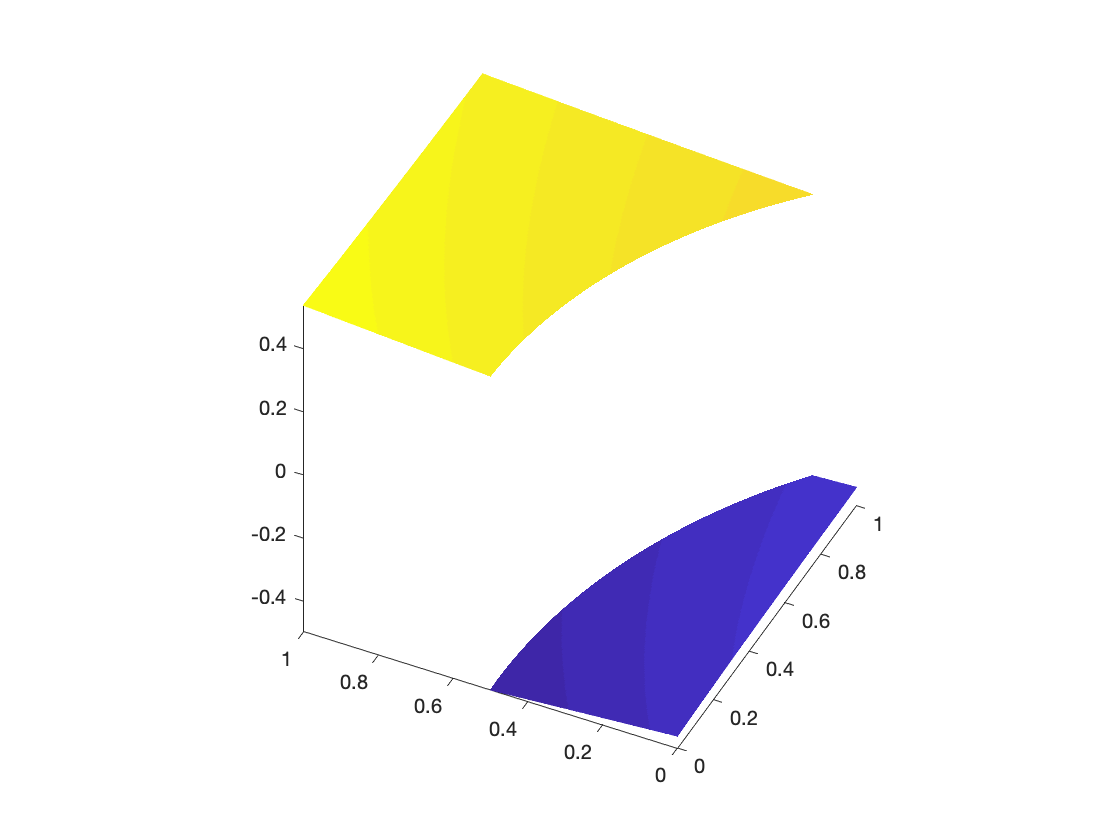}}
\caption{Transient layer problem: exact solutions}
 \label{burman_sol}
\end{figure}

\begin{figure}[!htb]
\centering 
\subfigure[a refined mesh]{ 
\includegraphics[width=0.3\linewidth]{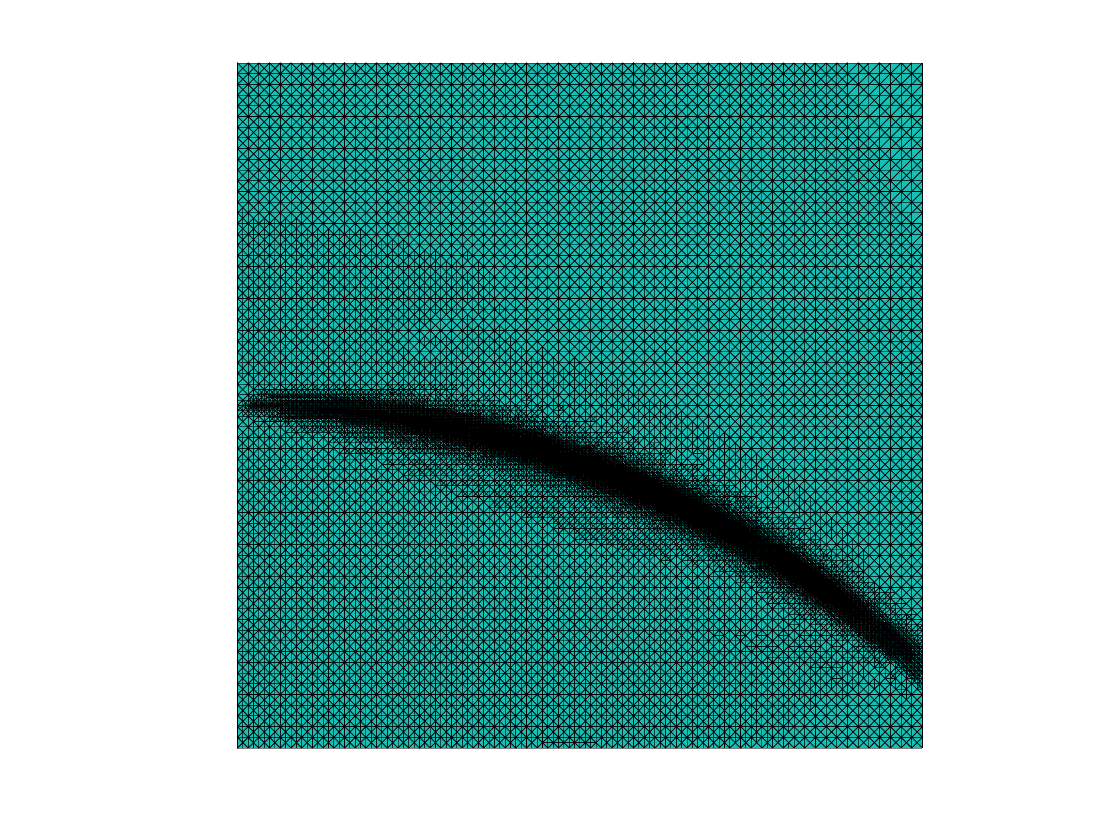}}
~
\subfigure[convergence history]{
\includegraphics[width=0.3\linewidth]{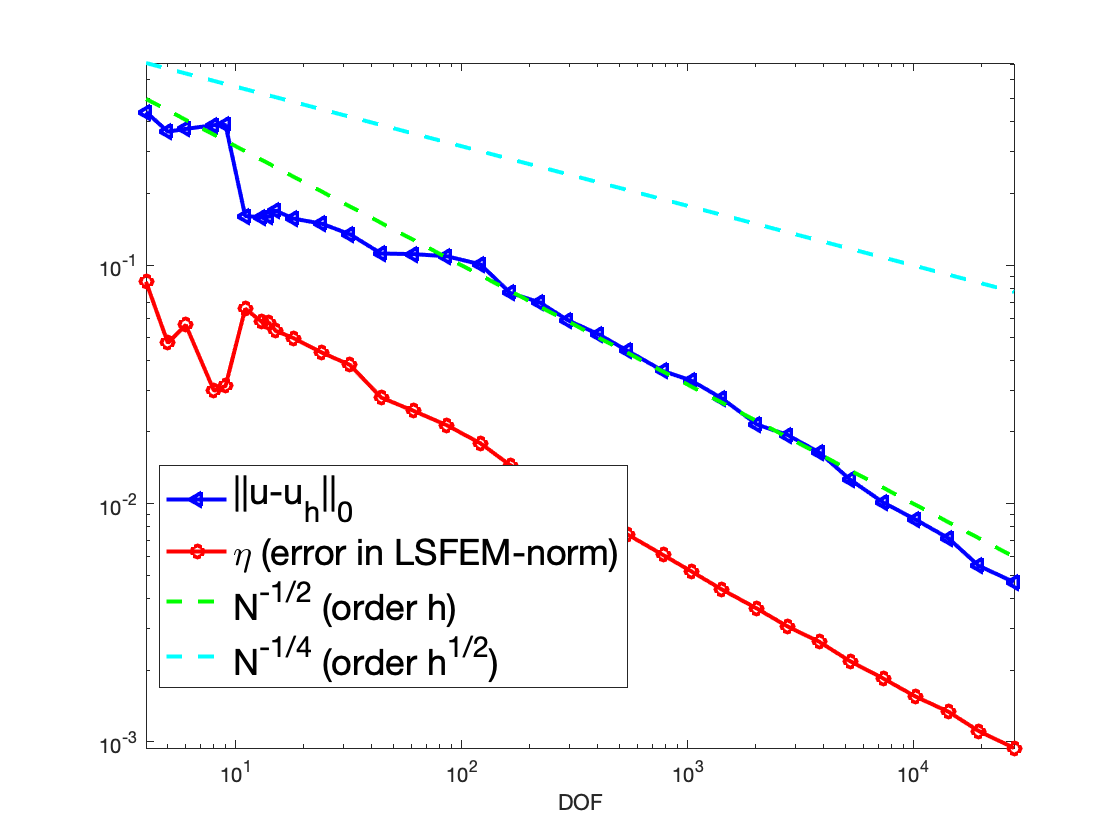}}
~
\subfigure[contours of solution]{
\includegraphics[width=0.3\linewidth]{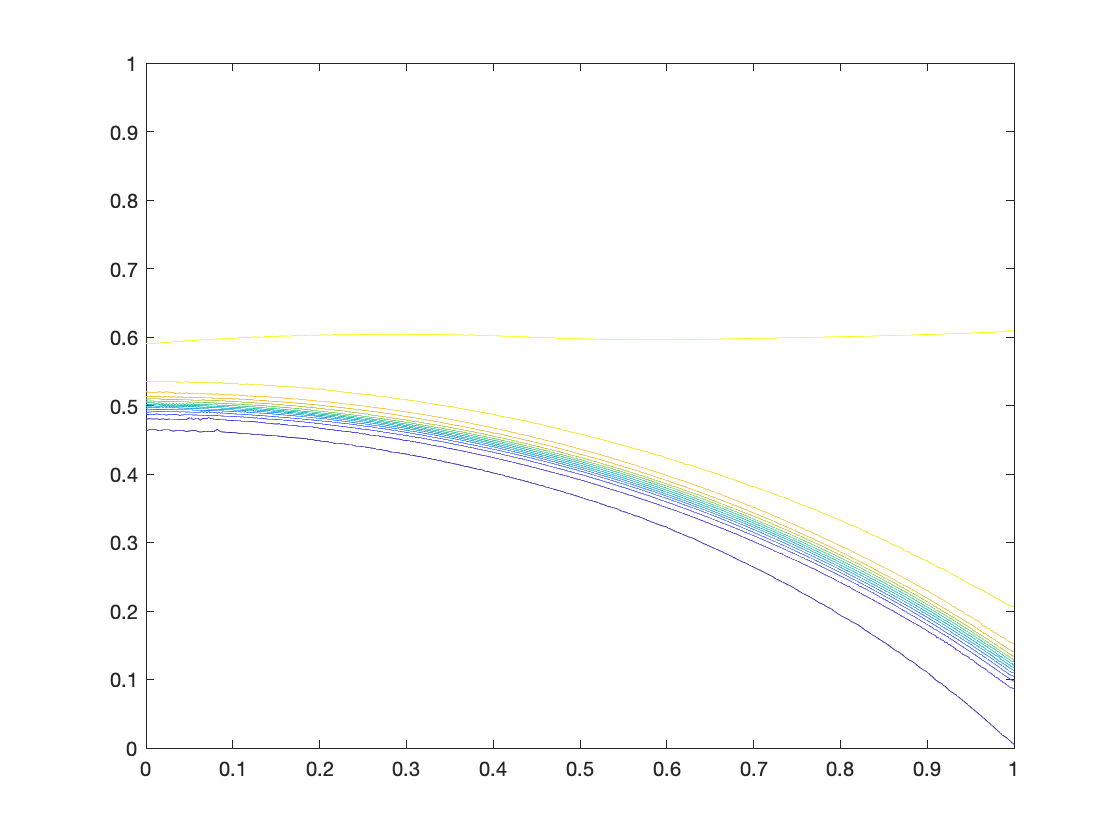}}
\caption{Transient layer problem: $\epsilon = 10^{-2}$}
 \label{burman_mesh_1e2}
\end{figure}

When $\epsilon = 0.01$, we show the numerical results in  Fig. \ref{burman_mesh_1e2}. The behaviors of the methods are very similar to the global continuous solution case. When $\epsilon = 10^{-10}$, we show the numerical results in  Fig. \ref{burman_mesh_1e10}. The behaviors of the methods are very similar to the  piecewise smooth solution with non-matching grid case, the example 7.8. The order of convergence of $\|u-u_h\|_0$ is about $0.12$. The contour of the solution on the right of Fig. \ref{burman_mesh_1e10} shows that the overshooting is neglectable when the mesh is fine enough. 

\begin{figure}[!htb]
\centering 
\subfigure[a refined mesh]{ 
\includegraphics[width=0.3\linewidth]{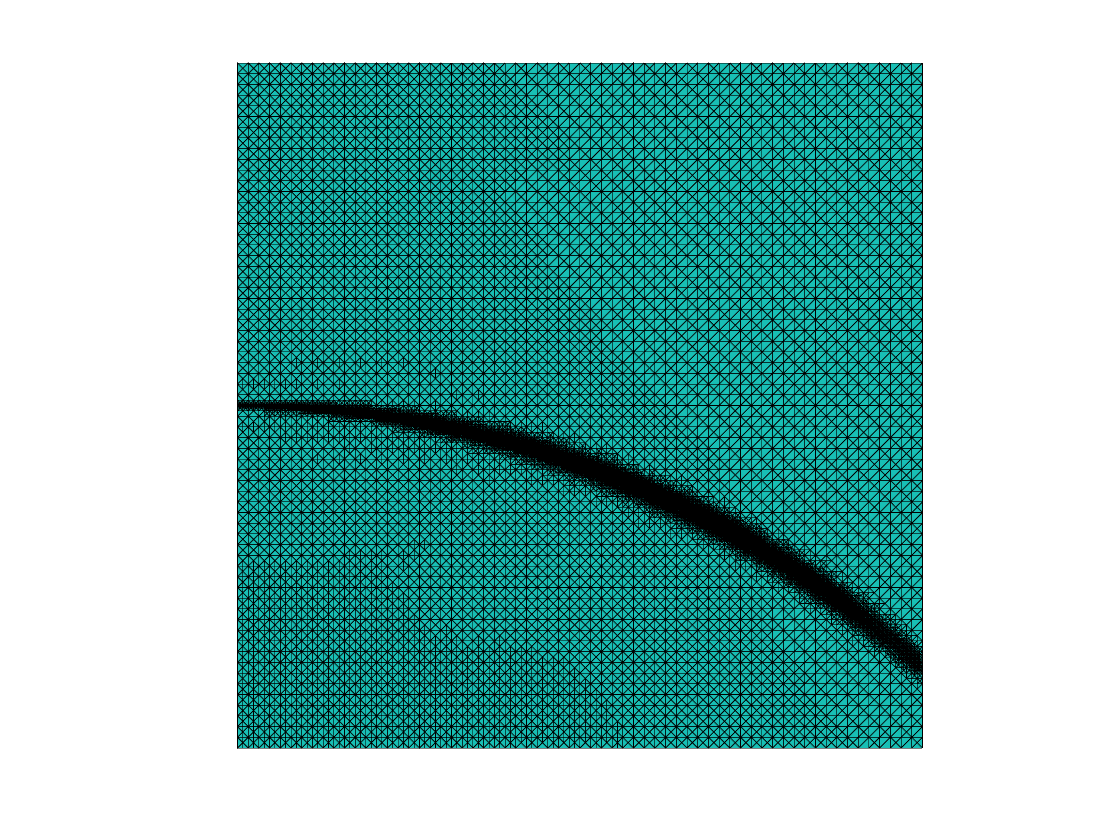}}
~
\subfigure[convergence history]{
\includegraphics[width=0.3\linewidth]{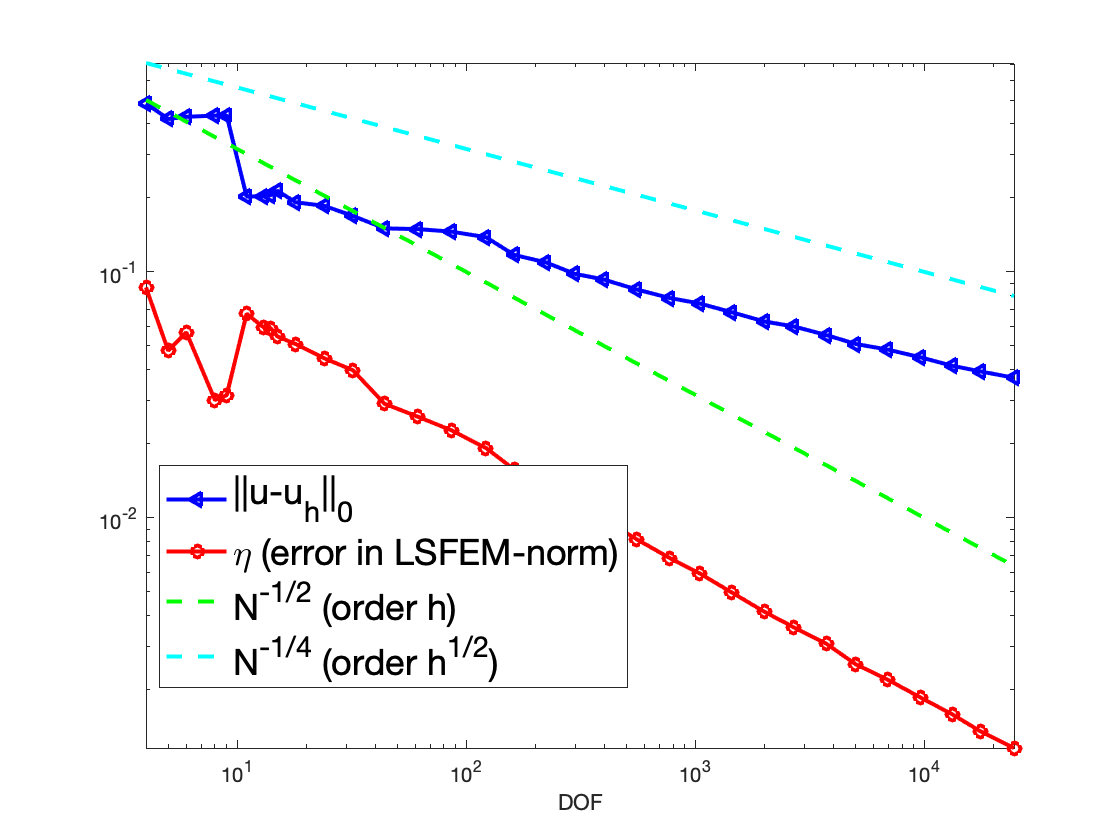}}
~
\subfigure[contours of solution]{
\includegraphics[width=0.3\linewidth]{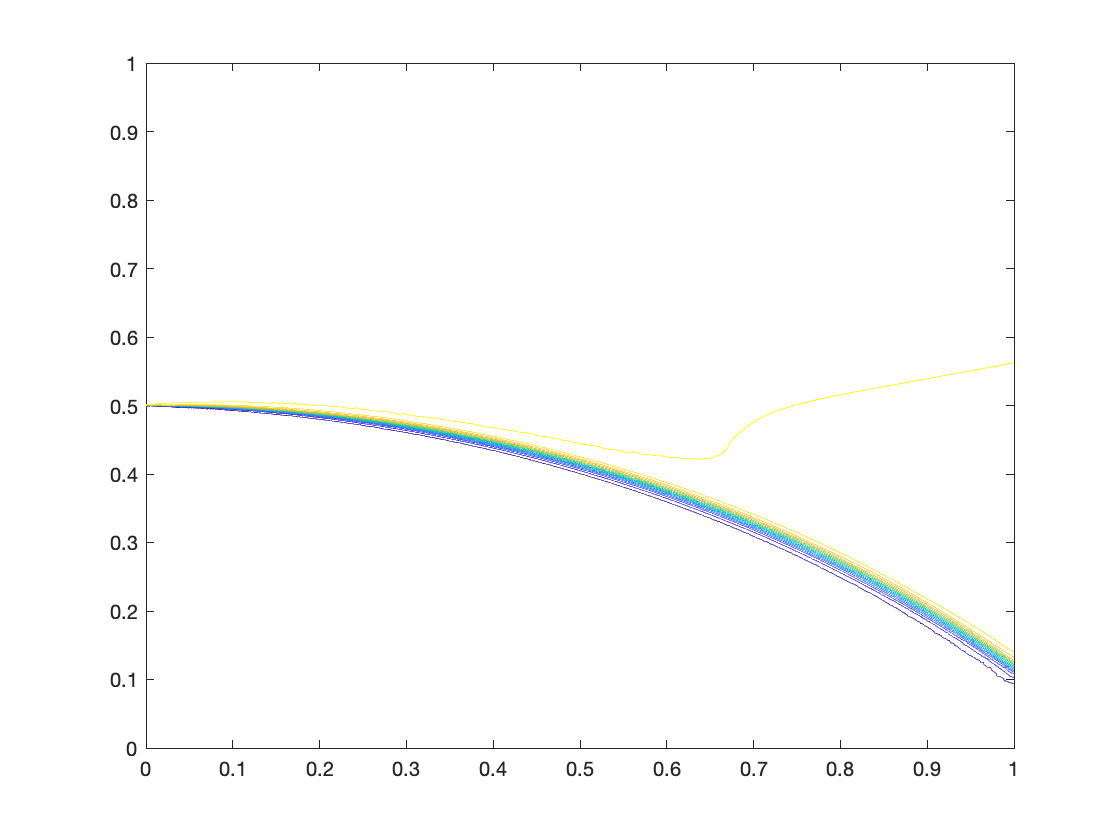}}
\caption{Transient layer problem: $\epsilon = 10^{-10}$}
 \label{burman_mesh_1e10}
\end{figure}

\subsection{General comments about the numerical experiments}
In our numerical tests, we found that all three flux-based formulations have almost identical results. 
The half-order loss on the inflow boundary elements for LSFEM-B1 is neglectable/non-observable, thus for the methods with weakly enforced inflow boundary conditions, we prefer LSFEM-B1 over LSFEM-B2, since for LSFEM-B2, the current choice of the weight $\a_F =10$ is based on our numerical experience only, we do face the possibility of a too small choice to make the boundary condition too weakly enforced and  the whole method unbalanced.


Compared with the C-LSFEM where continuous approximation is used, the flux based LSFEMs is much better on handling the discontinuous boundary condition and discontinuous solution on matched and non-matched meshes.

For the case that the discontinuity is not aligned with the mesh, the numerical tests show that the adaptive $RT_0\times P_0$ LSFEMs have very small overshooting with adaptive refined meshes, similar to the discussion in \cite{Zhang:19}.

A very common folklore of the least-squares method is it tends to have a strong smearing effect. We should point out here this least-squares method often refer to Galerkin least-squares or stabilized methods where some least-squares terms are added to variational problems, see for example \cite{HFH:88}. For the bona fide least-squares methods developed in this paper, we do not observe smearing effect.

\section{Concluding Remarks}
\setcounter{equation}{0}
In this paper, several LSFEMs  for the linear hyperbolic transport problem are developed based on the flux reformulation of the problem. The new methods can separate two continuity requirements of the solution with the flux in $H(\divvr)$ and the solution in $L^2$. Thus, simple and natural $H(\divvr)\times L^2$ conforming finite element spaces can be used to approximate the flux and solution. Several variants of the methods are developed to handle the inflow boundary condition strongly or weakly. With the reformulation, the least-squares finite element methods can handle discontinuous solutions much better than the traditional continuous polynomial approximations. With least-squares functionals as a posteriori error estimators, the adaptive methods can naturally identify error sources including singularity and non-matching discontinuity. The flux-based LSFEMs with the lowest $RT_0\times P_0$ approximation have neglectable overshooting phenomenon with adaptive methods. Existence, uniqueness, a priori and a posteriori error estimates are established for the proposed methods. Extensive numerical tests are done to show the effectiveness of the methods developed in the paper.

There are several future research directions. The first is a flux-reformulated LSFEM based on $L^1$-minimization similar to that of \cite{Guermond:04}. With the $L^1$-minimization, the method have potential to handle the discontinuity better with smaller overshooting effects. Flux-reformulated LSFEMs based on adaptively weighted $L^2$ norms can also be developed to handle the discontinuity better \cite{Jiang:98,BG:16}. New algorithms are needed to combine the mesh and weight adaptivities. 



One of the advantages of the discontinuous Galerkin method is that the system can be solved by successive elimination starting from the inflow boundary, which makes the method semi-explicit, see \cite{RH:73,Johnson:87}. Modifying our methods to develop a similar implementation is an on-going work, and we will apply these methods to the time-dependent problems.

It is always more changeling when apply numerical methods to nonlinear problems. In \cite{DMMO:05}, flux-reformulated LSFEMs are already suggests for the Burgers equation. But there are many open questions left, for example, how to ensure the numerical solution is the physical meaningful solution, what is the right continuous and discrete space settings, and how to guarantee the existence and uniqueness of the numerical solution? Developing LSFEMs that can answer these questions is also one of our ongoing work.

\section*{Acknowledgements}
S. Zhang is supported in part by Hong Kong Research Grants Council under the GRF Grant Project No. 11305319, CityU and a China Sichuan Provincial Science and Technology Research Grant 2018JY0187 via Chengdu Research Institute of City University of Hong Kong.
\section*{Acknowledgement}

\bibliographystyle{siam}
\bibliography{ls_transport}

\end{document}